\newtheorem{theorem}{Theorem}[section]
\newtheorem{proposition}[theorem]{Proposition}
\newtheorem{question}[theorem]{Question}
\newtheorem{corollary}[theorem]{Corollary}
\newtheorem{lemma}[theorem]{Lemma}
\theoremstyle{definition}
\newtheorem{definition}{Definition}[section]
\newtheorem{example}[definition]{Example}
\theoremstyle{remark}
\newtheorem{remark}{Remark}[section]
\newcommand{\va}{\mathbf{a}}
\newcommand{\vb}{\mathbf{b}}
\begin{document}

\title[Spanning tree enumeration via triangular rank-one perturbations]{Spanning tree enumeration via triangular rank-one perturbations of graph Laplacians}

\author{Christian Go}
\address{National University of Singapore, Department of Mathematics, Singapore}
\email{christian.go@nus.edu.sg}

\author{Khwa Zhong Xuan}
\address{Yale-NUS College, Division of Science, Singapore}
\email{zhongxuan@yale-nus.edu.sg}

\author{Xinyu Luo}
\address{Singapore, Singapore}
\email{luoxinyu06@gmail.com}

\author{Matthew T. Stamps}
\address{Ottawa, Ontario, Canada}
\email{matthewstamps@gmail.com}

\maketitle

\begin{abstract}
We present new short proofs of known spanning tree enumeration formulae for threshold and Ferrers graphs by showing that the Laplacian matrices of such graphs admit triangular rank-one perturbations.  We then characterize the set of graphs whose Laplacian matrices admit triangular rank-one perturbations as the class of special $2$-threshold graphs, introduced by Hung, Kloks, and Villaamil.  Our work introduces (1) a new characterization of special $2$-threshold graphs that generalizes the characterization of threshold graphs in terms of isolated and dominating vertices, and (2) a spanning tree enumeration formula for special $2$-threshold graphs that reduces to the aforementioned formulae for threshold and Ferrers graphs.  We consider both unweighted and weighted spanning tree enumeration.
\end{abstract}

\section{Introduction}

The motivation for this article originated with a linear algebraic technique for spanning tree enumeration introduced by Klee and Stamps~\cite{klee_stamps}. Kirchhoff's celebrated Matrix-Tree theorem~\cite{kirchhoff} asserts that the number of spanning trees $\tau(G)$ in a graph $G$ is encoded in any cofactor of its Laplacian matrix $L(G)$.  While the theorem ensures that $\tau(G)$ can be computed for a general graph $G$ in polynomial time, more efficient enumeration formulae exist for commonly studied classes of graphs, including complete~\cite{Aigner-Ziegler,Borchardt,cayley,EC1}, complete multipartite~\cite{Austin,Hartsfield-Werth,Lewis,Onodera,Scoins}, threshold~\cite{Chestnut-Fishkind,Hammer-Kelmans,Merris}, and Ferrers~\cite{E-VW} graphs. Klee and Stamps~\cite{klee_stamps} demonstrated that it can be easier to work with a rank-one perturbation of $L(G)$ rather than $L(G)$ itself.  They used their approach to give original straightforward proofs for each of the enumeration formulae listed above.  In this article, we present further simplified proofs of the spanning tree enumeration formulae for threshold and Ferrers graphs by showing that the vertices of such graphs can always be ordered so the associated Laplacian matrices admit triangular rank-one perturbations, which raises the following question:
 \begin{question}\label{question}
 Which graph Laplacians admit triangular rank-one perturbations? 
 \end{question}
We prove that the Laplacian matrix of a graph $G$ admits a triangular rank-one perturbation if and only if $G$ belongs to the class of \textit{special $2$-threshold} graphs introduced by Hung, Kloks, and Villaamil~\cite{HKV}. Our proof relies on a new characterization of special $2$-threshold graphs that generalizes the well known characterization of threshold graphs in terms of isolated and dominating vertices.  We also present a spanning tree enumeration formula for special $2$-threshold graphs that generalizes the aforementioned formulae for threshold and Ferrers graphs, revealing an unexpected connection between these seemingly unrelated classes of graphs. Our results extend naturally to weighted spanning tree enumerators, generalizing results of Martin and Reiner~\cite{Martin-Reiner} and Ehrenborg and van Willigenburg~\cite{E-VW}.  

The remaining sections of this article are organized as follows:  In Section~\ref{sec:prelims}, we set up some basic definitions and notation from graph theory, survey a number of spanning tree enumeration formulae in the literature, and review several linear algebraic techniques for spanning tree enumeration, which we use to present new short proofs of the spanning tree enumeration formulae for threshold and Ferrers graphs.  Section~\ref{sec:nuclear} is the main section of this article where we prove that a graph admits a triangular rank-one perturbation if and only if it is special $2$-threshold, and introduce a spanning tree enumeration formula for special $2$-threshold graphs that reduces to the known formulae for the special cases of threshold and Ferrers graphs.  This is also where we generalize the characterization of threshold graphs involving isolated and dominating vertices to special $2$-threshold graphs. We conclude the article with weighted versions of our main results in Section~\ref{sec:weighted}.   

\section{Background}\label{sec:prelims}

In this section, we go over the essential definitions, examples, and characterizations of the classes of graphs studied throughout this article, using standard notation wherever possible.  We also survey spanning tree enumeration formulae in the literature, review some linear algebraic techniques for spanning tree enumeration, and present new short proofs of established spanning tree enumeration formulae for threshold and Ferrers graphs.  %Readers familiar with (algebraic) graph theory should be able to skip ahead to Section~\ref{sec:nuclear} with minimal references to the contents of this section.

\subsection{Definitions and Examples}  

For the purposes of this article, a \textbf{graph} is a pair $G = (V,E)$ that consists of a finite set of \textbf{vertices}, $V = V(G)$, and a set of pairs of distinct vertices, $E = E(G)$, called \textbf{edges}. We will not consider graphs with loops (edges connecting a vertex to itself) or parallel edges (multiple edges between a given pair of vertices). 

\begin{example}
The graph $G = (V,E)$ with vertex and edge sets given by
$$V = \{1, 2, 3, 4, 5, 6\}\quad\text{and}\quad E = \{\{1,2\},\{1,4\},\{2,3\},\{2,5\},\{2,6\},\{4,5\},\{5,6\}\}$$  
is illustrated in Figure~\ref{fig:graphex}(a) alongside several other examples of graphs we shall examine in this section.  The vertices and edges of each graph are represented by dots and line segments, respectively.
\end{example}

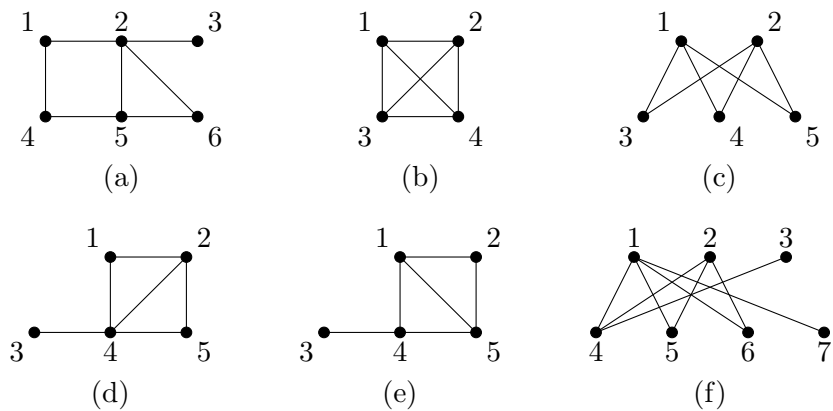
\begin{figure}[H]
    \centering
    \begin{tikzpicture}
    \filldraw (-1,1) circle (2pt);
    \node [above left] at (-1,1) {$1$};
    \filldraw (0,1) circle (2pt);
    \node [above] at (0,1) {$2$};
    \filldraw (1,1) circle (2pt);
    \node [above right] at (1,1) {$3$};
    \filldraw (-1,0) circle (2pt);
    \node [below left] at (-1,0) {$4$};
    \filldraw (0,0) circle (2pt);
    \node [below] at (0,0) {$5$};
    \filldraw (1,0) circle (2pt);
    \node [below right] at (1,0) {$6$};
    \draw (-1,1)--(0,1);
    \draw (-1,1)--(-1,0);
    \draw (0,1)--(1,0);
    \draw (0,1)--(0,0);
    \draw (0,1)--(1,1);
    \draw (-1,0)--(0,0);
    \draw (0,0)--(1,0);
    \node [below] at (0,-0.5) {(a)};
    \end{tikzpicture}
    \qquad\quad\;
    \begin{tikzpicture}
    \filldraw (0,0) circle (2pt);
    \node [below left] at (0,0) {$3$};
    \filldraw (0,1) circle (2pt);
    \node [above left] at (0,1) {$1$};
    \filldraw (1,1) circle (2pt);
    \node [above right] at (1,1) {$2$};
    \filldraw (1,0) circle (2pt);
    \node [below right] at (1,0) {$4$};
    \draw (0,0)--(0,1)--(1,1)--(1,0)--(0,0)--(1,1);
    \draw (0,1)--(1,0);
    \node [below] at (0.5,-0.5) {(b)};
    \end{tikzpicture}
    \qquad\quad\;
    \begin{tikzpicture}
    \filldraw (0,1) circle (2pt);
    \node [above left] at (0,1) {$1$};
    \filldraw (1,1) circle (2pt);
    \node [above right] at (1,1) {$2$};
    \filldraw (-0.5,0) circle (2pt);
    \node [below left] at (-0.5,0) {$3$};
    \filldraw (0.5,0) circle (2pt);
    \node [below right] at (0.5,0) {$4$};
    \filldraw (1.5,0) circle (2pt);
    \node [below right] at (1.5,0) {$5$};
    \draw (0,1)--(-0.5,0);
    \draw (0,1)--(0.5,0);
    \draw (0,1)--(1.5,0);
    \draw (1,1)--(-0.5,0);
    \draw (1,1)--(0.5,0);
    \draw (1,1)--(1.5,0);
    \node [below] at (0.5,-0.5) {(c)};
    \end{tikzpicture}
    
    \medskip
    
    \begin{tikzpicture}
        \filldraw (0,0) circle (2pt);
        \node [below left] at (0,0) {$3$};
        \filldraw (1,0) circle (2pt);
        \node [below] at (1,0) {$4$};
        \filldraw (1,1) circle (2pt);
        \node [above left] at (1,1) {$1$};
        \filldraw (2,1) circle (2pt);
        \node [above right] at (2,1) {$2$};
        \filldraw (2,0) circle (2pt);
        \node [below right] at (2,0) {$5$};
        \draw (0,0)--(1,0);
        \draw (1,0) -- (2,0) -- (2,1) -- (1,1) -- (1,0) -- (2,1);
        \node [below] at (1,-0.5) {(d)};
    \end{tikzpicture}
    \qquad
        \begin{tikzpicture}
        \filldraw (0,0) circle (2pt);
        \node [below left] at (0,0) {$3$};
        \filldraw (1,0) circle (2pt);
        \node [below] at (1,0) {$4$};
        \filldraw (1,1) circle (2pt);
        \node [above left] at (1,1) {$1$};
        \filldraw (2,1) circle (2pt);
        \node [above right] at (2,1) {$2$};
        \filldraw (2,0) circle (2pt);
        \node [below right] at (2,0) {$5$};
        \draw (0,0)--(1,0);
        \draw (1,1) -- (2,1) -- (2,0) -- (1,0) -- (1,1) -- (2,0);
        \node [below] at (1,-0.5) {(e)};
    \end{tikzpicture}
    \qquad
    \begin{tikzpicture}
        \filldraw (0,0) circle (2pt);
        \node[below] at (0,0) {$4$};
        \filldraw (1,0) circle (2pt);
        \node[below] at (1,0) {$5$};
        \filldraw (2,0) circle (2pt);
        \node[below] at (2,0) {$6$};
        \filldraw (3,0) circle (2pt);
        \node[below] at (3,0) {$7$};
        \filldraw (0.5,1) circle (2pt);
        \node[above] at (0.5,1) {$1$};
        \filldraw (1.5,1) circle (2pt);
        \node[above] at (1.5,1) {$2$};
        \filldraw (2.5,1) circle (2pt);
        \node[above] at (2.5,1) {$3$};
        \draw (0.5,1) -- (0,0);
        \draw (0.5,1) -- (1,0);
        \draw (0.5,1) -- (2,0);
        \draw (0.5,1) -- (3,0);
        \draw (1.5,1) -- (0,0);
        \draw (1.5,1) -- (1,0);
        \draw (1.5,1) -- (2,0);
        \draw (2.5,1) -- (0,0);
        \node [below] at (1.5,-0.5) {(f)};
    \end{tikzpicture}
    \caption{Examples of graphs.}
    \label{fig:graphex}
\end{figure}

Two vertices are \textbf{adjacent} if they form an edge.  A \textbf{path} from a vertex $u$ to a vertex $w$ is a sequence of edges $$\{v_0,v_1\}, \{v_1,v_2\}, \cdots, \{v_{k-1},v_k\}$$ such that $v_0 = u$, $v_k = w$, and no two $v_i$, except possibly $v_0$ and $v_k$, are equal.  A path from a vertex to itself is called a \textbf{cycle}. The \textbf{neighborhood} of a vertex is the set of all vertices adjacent to it. That is, for a graph $G = (V,E)$ and a vertex $v \in V$, the neighborhood of $v$ is the set $N(v) = \{ w \in V \ | \ \{v,w\} \in E\}$.  We use $N_W(v) = N(v) \cap W$ to denote the neighborhood of $v$ restricted to a subset $W \subseteq V$.  The \textbf{degree} of a vertex $v$, denoted by $\deg(v)$ is the cardinality of its neighborhood.  A graph $H$ is a \textbf{subgraph} of a graph $G$ if $V(H) \subseteq V(G)$ and $E(H) \subseteq E(G)$. The \textbf{induced subgraph} of $G$ on a subset $W \subseteq V(G)$ is the subgraph $G[W]$ with $$V(G[W]) = W \quad \text{and} \quad E(G[W]) = \{\{v,w\} \in E(G) \ | \ v,w \in W\}.$$ A subgraph $H$ of $G$ is \textbf{spanning} if $V(H) = V(G)$, \textbf{path-connected} if there exists a path in $E(H)$ between every pair of vertices in $V(H)$, and \textbf{acyclic} if $E(H)$ contains no cycles.  Subgraphs that satisfy all three of those properties are called \textbf{spanning trees}.   

\begin{example}
Four subgraphs of the graph in Figure~\ref{fig:graphex}(a) are illustrated in Figure~\ref{fig:subgraphs}: Subgraph (a) is path-connected and acyclic but not spanning, subgraph (b) is spanning and acyclic but not path-connected, and subgraph (c) is spanning and path-connected but not acyclic.  Subgraph (d) is the only spanning tree.

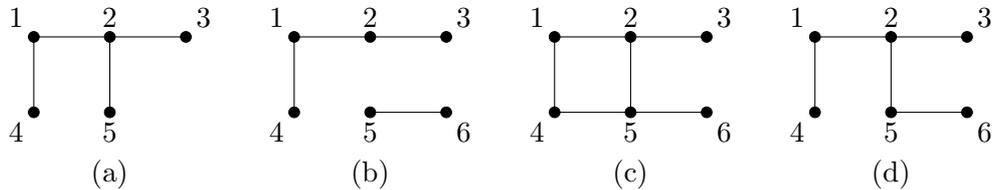
\begin{figure}[H]
    \centering
    \begin{tikzpicture}
    \filldraw (-1,1) circle (2pt);
    \node [above left] at (-1,1) {$1$};
    \filldraw (0,1) circle (2pt);
    \node [above] at (0,1) {$2$};
    \filldraw (1,1) circle (2pt);
    \node [above right] at (1,1) {$3$};
    \filldraw (-1,0) circle (2pt);
    \node [below left] at (-1,0) {$4$};
    \filldraw (0,0) circle (2pt);
    \node [below] at (0,0) {$5$};
    \draw (-1,1)--(0,1);
    \draw (-1,1)--(-1,0);
    \draw (0,1)--(0,0);
    \draw (0,1)--(1,1);
    \node [below] at (0,-0.5) {(a)};
    \end{tikzpicture}
    \quad
    \begin{tikzpicture}
    \filldraw (-1,1) circle (2pt);
    \node [above left] at (-1,1) {$1$};
    \filldraw (0,1) circle (2pt);
    \node [above] at (0,1) {$2$};
    \filldraw (1,1) circle (2pt);
    \node [above right] at (1,1) {$3$};
    \filldraw (-1,0) circle (2pt);
    \node [below left] at (-1,0) {$4$};
    \filldraw (0,0) circle (2pt);
    \node [below] at (0,0) {$5$};
    \filldraw (1,0) circle (2pt);
    \node [below right] at (1,0) {$6$};
    \draw (-1,1)--(0,1);
    \draw (-1,1)--(-1,0);
    \draw (0,1)--(1,1);
    \draw (0,0)--(1,0);
    \node [below] at (0,-0.5) {(b)};
    \end{tikzpicture}
    \quad
    \begin{tikzpicture}
    \filldraw (-1,1) circle (2pt);
    \node [above left] at (-1,1) {$1$};
    \filldraw (0,1) circle (2pt);
    \node [above] at (0,1) {$2$};
    \filldraw (1,1) circle (2pt);
    \node [above right] at (1,1) {$3$};
    \filldraw (-1,0) circle (2pt);
    \node [below left] at (-1,0) {$4$};
    \filldraw (0,0) circle (2pt);
    \node [below] at (0,0) {$5$};
    \filldraw (1,0) circle (2pt);
    \node [below right] at (1,0) {$6$};
    \draw (-1,1)--(0,1);
    \draw (-1,1)--(-1,0);
    \draw (0,1)--(0,0);
    \draw (0,1)--(1,1);
    \draw (-1,0)--(0,0);
    \draw (0,0)--(1,0);
    \node [below] at (0,-0.5) {(c)};
    \end{tikzpicture}
    \quad
    \begin{tikzpicture}
    \filldraw (-1,1) circle (2pt);
    \node [above left] at (-1,1) {$1$};
    \filldraw (0,1) circle (2pt);
    \node [above] at (0,1) {$2$};
    \filldraw (1,1) circle (2pt);
    \node [above right] at (1,1) {$3$};
    \filldraw (-1,0) circle (2pt);
    \node [below left] at (-1,0) {$4$};
    \filldraw (0,0) circle (2pt);
    \node [below] at (0,0) {$5$};
    \filldraw (1,0) circle (2pt);
    \node [below right] at (1,0) {$6$};
    \draw (-1,1)--(0,1);
    \draw (-1,1)--(-1,0);
    \draw (0,1)--(0,0);
    \draw (0,1)--(1,1);
    \draw (0,0)--(1,0);
    \node [below] at (0,-0.5) {(d)};
    \end{tikzpicture}
    \caption{An example and several non-examples of spanning trees.}
    \label{fig:subgraphs}
\end{figure}

The complete list of spanning trees of the graph in Figure~\ref{fig:graphex}(a) is illustrated in Figure~\ref{fig:spanningex}. 

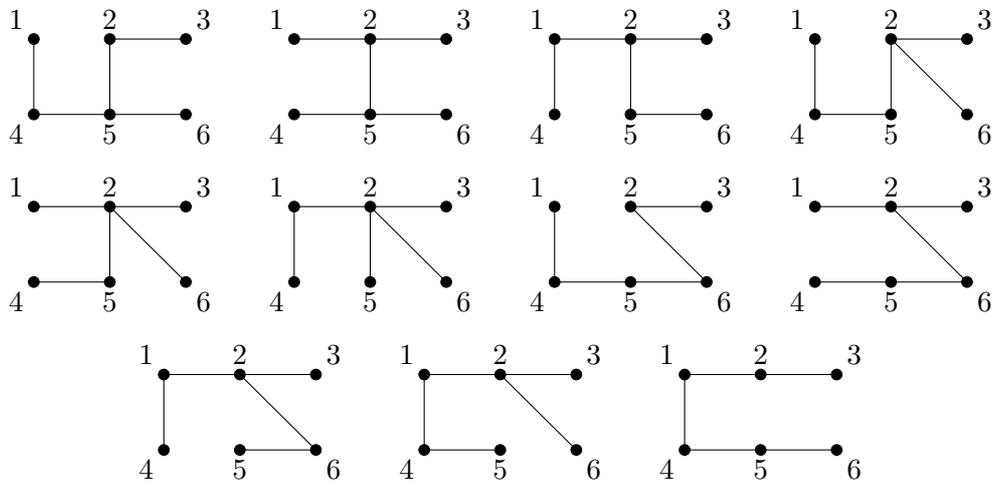
\begin{figure}[H]
    \centering
    \begin{tikzpicture}
    \filldraw (-1,1) circle (2pt);
    \node [above left] at (-1,1) {$1$};
    \filldraw (0,1) circle (2pt);
    \node [above] at (0,1) {$2$};
    \filldraw (1,1) circle (2pt);
    \node [above right] at (1,1) {$3$};
    \filldraw (-1,0) circle (2pt);
    \node [below left] at (-1,0) {$4$};
    \filldraw (0,0) circle (2pt);
    \node [below] at (0,0) {$5$};
    \filldraw (1,0) circle (2pt);
    \node [below right] at (1,0) {$6$};
%    \draw (-1,1)--(0,1);
    \draw (-1,1)--(-1,0);
%    \draw (0,1)--(1,0);
    \draw (0,1)--(0,0);
    \draw (0,1)--(1,1);
    \draw (-1,0)--(0,0);
    \draw (0,0)--(1,0);
    \end{tikzpicture}
    \quad
        \begin{tikzpicture}
    \filldraw (-1,1) circle (2pt);
    \node [above left] at (-1,1) {$1$};
    \filldraw (0,1) circle (2pt);
    \node [above] at (0,1) {$2$};
    \filldraw (1,1) circle (2pt);
    \node [above right] at (1,1) {$3$};
    \filldraw (-1,0) circle (2pt);
    \node [below left] at (-1,0) {$4$};
    \filldraw (0,0) circle (2pt);
    \node [below] at (0,0) {$5$};
    \filldraw (1,0) circle (2pt);
    \node [below right] at (1,0) {$6$};
    \draw (-1,1)--(0,1);
%    \draw (-1,1)--(-1,0);
%    \draw (0,1)--(1,0);
    \draw (0,1)--(0,0);
    \draw (0,1)--(1,1);
    \draw (-1,0)--(0,0);
    \draw (0,0)--(1,0);
    \end{tikzpicture}
    \quad
        \begin{tikzpicture}
    \filldraw (-1,1) circle (2pt);
    \node [above left] at (-1,1) {$1$};
    \filldraw (0,1) circle (2pt);
    \node [above] at (0,1) {$2$};
    \filldraw (1,1) circle (2pt);
    \node [above right] at (1,1) {$3$};
    \filldraw (-1,0) circle (2pt);
    \node [below left] at (-1,0) {$4$};
    \filldraw (0,0) circle (2pt);
    \node [below] at (0,0) {$5$};
    \filldraw (1,0) circle (2pt);
    \node [below right] at (1,0) {$6$};
    \draw (-1,1)--(0,1);
    \draw (-1,1)--(-1,0);
%    \draw (0,1)--(1,0);
    \draw (0,1)--(0,0);
    \draw (0,1)--(1,1);
%    \draw (-1,0)--(0,0);
    \draw (0,0)--(1,0);
    \end{tikzpicture}
    \quad
        \begin{tikzpicture}
    \filldraw (-1,1) circle (2pt);
    \node [above left] at (-1,1) {$1$};
    \filldraw (0,1) circle (2pt);
    \node [above] at (0,1) {$2$};
    \filldraw (1,1) circle (2pt);
    \node [above right] at (1,1) {$3$};
    \filldraw (-1,0) circle (2pt);
    \node [below left] at (-1,0) {$4$};
    \filldraw (0,0) circle (2pt);
    \node [below] at (0,0) {$5$};
    \filldraw (1,0) circle (2pt);
    \node [below right] at (1,0) {$6$};
%    \draw (-1,1)--(0,1);
    \draw (-1,1)--(-1,0);
    \draw (0,1)--(1,0);
    \draw (0,1)--(0,0);
    \draw (0,1)--(1,1);
    \draw (-1,0)--(0,0);
%    \draw (0,0)--(1,0);
    \end{tikzpicture}
    
    \medskip
    
    \begin{tikzpicture}
    \filldraw (-1,1) circle (2pt);
    \node [above left] at (-1,1) {$1$};
    \filldraw (0,1) circle (2pt);
    \node [above] at (0,1) {$2$};
    \filldraw (1,1) circle (2pt);
    \node [above right] at (1,1) {$3$};
    \filldraw (-1,0) circle (2pt);
    \node [below left] at (-1,0) {$4$};
    \filldraw (0,0) circle (2pt);
    \node [below] at (0,0) {$5$};
    \filldraw (1,0) circle (2pt);
    \node [below right] at (1,0) {$6$};
    \draw (-1,1)--(0,1);
%    \draw (-1,1)--(-1,0);
    \draw (0,1)--(1,0);
    \draw (0,1)--(0,0);
    \draw (0,1)--(1,1);
    \draw (-1,0)--(0,0);
%    \draw (0,0)--(1,0);
    \end{tikzpicture}
    \quad
    \begin{tikzpicture}
    \filldraw (-1,1) circle (2pt);
    \node [above left] at (-1,1) {$1$};
    \filldraw (0,1) circle (2pt);
    \node [above] at (0,1) {$2$};
    \filldraw (1,1) circle (2pt);
    \node [above right] at (1,1) {$3$};
    \filldraw (-1,0) circle (2pt);
    \node [below left] at (-1,0) {$4$};
    \filldraw (0,0) circle (2pt);
    \node [below] at (0,0) {$5$};
    \filldraw (1,0) circle (2pt);
    \node [below right] at (1,0) {$6$};
    \draw (-1,1)--(0,1);
    \draw (-1,1)--(-1,0);
    \draw (0,1)--(1,0);
    \draw (0,1)--(0,0);
    \draw (0,1)--(1,1);
%    \draw (-1,0)--(0,0);
%    \draw (0,0)--(1,0);
    \end{tikzpicture}
    \quad
        \begin{tikzpicture}
    \filldraw (-1,1) circle (2pt);
    \node [above left] at (-1,1) {$1$};
    \filldraw (0,1) circle (2pt);
    \node [above] at (0,1) {$2$};
    \filldraw (1,1) circle (2pt);
    \node [above right] at (1,1) {$3$};
    \filldraw (-1,0) circle (2pt);
    \node [below left] at (-1,0) {$4$};
    \filldraw (0,0) circle (2pt);
    \node [below] at (0,0) {$5$};
    \filldraw (1,0) circle (2pt);
    \node [below right] at (1,0) {$6$};
%    \draw (-1,1)--(0,1);
    \draw (-1,1)--(-1,0);
    \draw (0,1)--(1,0);
%    \draw (0,1)--(0,0);
    \draw (0,1)--(1,1);
    \draw (-1,0)--(0,0);
    \draw (0,0)--(1,0);
    \end{tikzpicture}
    \quad
        \begin{tikzpicture}
    \filldraw (-1,1) circle (2pt);
    \node [above left] at (-1,1) {$1$};
    \filldraw (0,1) circle (2pt);
    \node [above] at (0,1) {$2$};
    \filldraw (1,1) circle (2pt);
    \node [above right] at (1,1) {$3$};
    \filldraw (-1,0) circle (2pt);
    \node [below left] at (-1,0) {$4$};
    \filldraw (0,0) circle (2pt);
    \node [below] at (0,0) {$5$};
    \filldraw (1,0) circle (2pt);
    \node [below right] at (1,0) {$6$};
    \draw (-1,1)--(0,1);
%    \draw (-1,1)--(-1,0);
    \draw (0,1)--(1,0);
%    \draw (0,1)--(0,0);
    \draw (0,1)--(1,1);
    \draw (-1,0)--(0,0);
    \draw (0,0)--(1,0);
    \end{tikzpicture}
    
    \medskip
    
    \begin{tikzpicture}
    \filldraw (-1,1) circle (2pt);
    \node [above left] at (-1,1) {$1$};
    \filldraw (0,1) circle (2pt);
    \node [above] at (0,1) {$2$};
    \filldraw (1,1) circle (2pt);
    \node [above right] at (1,1) {$3$};
    \filldraw (-1,0) circle (2pt);
    \node [below left] at (-1,0) {$4$};
    \filldraw (0,0) circle (2pt);
    \node [below] at (0,0) {$5$};
    \filldraw (1,0) circle (2pt);
    \node [below right] at (1,0) {$6$};
    \draw (-1,1)--(0,1);
    \draw (-1,1)--(-1,0);
    \draw (0,1)--(1,0);
%    \draw (0,1)--(0,0);
    \draw (0,1)--(1,1);
%    \draw (-1,0)--(0,0);
    \draw (0,0)--(1,0);
    \end{tikzpicture}
    \quad 
    \begin{tikzpicture}
    \filldraw (-1,1) circle (2pt);
    \node [above left] at (-1,1) {$1$};
    \filldraw (0,1) circle (2pt);
    \node [above] at (0,1) {$2$};
    \filldraw (1,1) circle (2pt);
    \node [above right] at (1,1) {$3$};
    \filldraw (-1,0) circle (2pt);
    \node [below left] at (-1,0) {$4$};
    \filldraw (0,0) circle (2pt);
    \node [below] at (0,0) {$5$};
    \filldraw (1,0) circle (2pt);
    \node [below right] at (1,0) {$6$};
    \draw (-1,1)--(0,1);
    \draw (-1,1)--(-1,0);
    \draw (0,1)--(1,0);
%    \draw (0,1)--(0,0);
    \draw (0,1)--(1,1);
    \draw (-1,0)--(0,0);
%    \draw (0,0)--(1,0);
    \end{tikzpicture}
    \quad 
    \begin{tikzpicture}
    \filldraw (-1,1) circle (2pt);
    \node [above left] at (-1,1) {$1$};
    \filldraw (0,1) circle (2pt);
    \node [above] at (0,1) {$2$};
    \filldraw (1,1) circle (2pt);
    \node [above right] at (1,1) {$3$};
    \filldraw (-1,0) circle (2pt);
    \node [below left] at (-1,0) {$4$};
    \filldraw (0,0) circle (2pt);
    \node [below] at (0,0) {$5$};
    \filldraw (1,0) circle (2pt);
    \node [below right] at (1,0) {$6$};
    \draw (-1,1)--(0,1);
    \draw (-1,1)--(-1,0);
%    \draw (0,1)--(1,0);
%    \draw (0,1)--(0,0);
    \draw (0,1)--(1,1);
    \draw (-1,0)--(0,0);
    \draw (0,0)--(1,0);
    \end{tikzpicture}
    \caption{All spanning trees of the graph $G$ in Figure~\ref{fig:graphex}(a).}
    \label{fig:spanningex}
\end{figure}
\end{example}

%We proceed to survey several well known classes of graphs.

\subsubsection{Complete Graphs \& Generalizations} The first class of graphs we consider in this article are the complete graphs.  A graph is \textbf{complete} if every pair of distinct vertices forms an edge.  A complete graph on $n$ vertices is customarily denoted by $K_n$.  The graph illustrated in Figure~\ref{fig:graphex}(b) is a complete graph on four vertices, $K_4$.  A natural generalization of complete graphs are complete multipartite graphs. A graph $G = (V,E)$ is \textbf{complete multipartite} if $V$ can be partitioned into (disjoint) subsets $V_1, \ldots, V_k$ such that two vertices form an edge if and only if they belong to different subsets.  Such a graph is customarily denoted by $K_{n_1,\ldots,n_k}$ where $n_i = |V_i|$ for $1 \leq i \leq k$.  The graph illustrated in Figure~\ref{fig:graphex}(c) is a complete bipartite graph, $K_{2,3}$, with $V_1 = \{1,2\}$ and $V_2 = \{3,4,5\}$.

\subsubsection{Threshold Graphs \& Generalizations} To introduce the next class of graphs, we must first define isolated and dominating vertices.  A vertex is \textbf{isolated} in a graph $G$ if it is not contained in any edges of $G$ and \textbf{dominating} in $G$ if it forms an edge with every other vertex in $G$. A graph $G$ is \textbf{threshold} if every induced subgraph contains an isolated or dominating vertex.  Threshold graphs were first introduced by Chv\'atal and Hammer in \cite{chvatal_hammer_1977}, and they have many equivalent characterizations~\cite{Mahadev-Peled}.  

One of the best known characterizations is that every threshold graph can be constructed from an initial vertex by repeatedly appending either an isolated vertex or a dominating vertex. To be more precise, given an ordering $v_1, \ldots, v_n$ of the vertices of a graph $G$, the \textbf{lower neighborhood} of the vertex $v_i$, denoted by $N^< (v_i)$, is the set of all vertices adjacent to $v_i$ with an index lower than $i,$ that is, $N^< (v_i) = N(v_i) \cap \{v_1,\dots,v_{i-1}\}.$

\begin{theorem}[\cite{chvatal_hammer_1977}]\label{thm:threshold-order}
A graph $G$ on $n$ vertices is threshold if and only if its vertices can be ordered $v_1, \ldots, v_n$ such that $$N^<(v_i) = \emptyset \quad \text{or} \quad N^<(v_i) = \{v_1,\ldots,v_{i-1}\}$$ for every $i \in [n]$.
\end{theorem}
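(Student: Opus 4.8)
The plan is to prove the two implications separately, with the reverse direction being the more direct one. For the reverse direction, suppose the vertices of $G$ are ordered $v_1, \ldots, v_n$ so that each $N^<(v_i)$ is either $\emptyset$ or all of $\{v_1,\ldots,v_{i-1}\}$. To show $G$ is threshold, I would take an arbitrary nonempty $W \subseteq V$ and examine the vertex $v_m \in W$ of largest index. Every other vertex of $W$ has index smaller than $m$, so the neighbors of $v_m$ inside $W$ are exactly the elements of $N^<(v_m)$ that lie in $W$; that is, $N_W(v_m) = N^<(v_m) \cap W$. If $N^<(v_m) = \emptyset$, then $v_m$ is isolated in $G[W]$; if $N^<(v_m) = \{v_1,\ldots,v_{m-1}\}$, then $v_m$ is adjacent to every other vertex of $W$ and hence dominating in $G[W]$. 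Either way $G[W]$ contains an isolated or dominating vertex, so $G$ is threshold.

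For the forward direction, I would induct on $n$, peeling vertices off the top of the order. The base case $n = 1$ is immediate, since a single vertex has $N^<(v_1) = \emptyset$. For the inductive step, apply the threshold hypothesis to the full graph $G = G[V]$ itself to obtain a vertex that is isolated or dominating in $G$, and designate it $v_n$. The key observation is that $G' = G[V \setminus \{v_n\}]$ is again threshold, because every induced subgraph of $G'$ is also an induced subgraph of $G$ and therefore already contains an isolated or dominating vertex. By the inductive hypothesis, the vertices of $G'$ admit an ordering $v_1, \ldots, v_{n-1}$ with the desired property. I would then verify that appending $v_n$ preserves the property: deleting $v_n$ leaves $N^<(v_i)$ unchanged for every $i < n$, since those lower neighborhoods reference only indices below $i < n$, so the condition continues to hold for $v_1, \ldots, v_{n-1}$; and $v_n$ itself satisfies $N^<(v_n) = \emptyset$ when it is isolated and $N^<(v_n) = \{v_1,\ldots,v_{n-1}\}$ when it is dominating.

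Since the argument is essentially a greedy peeling construction, I do not expect a serious obstacle. The one point requiring care is the bookkeeping in the forward direction: confirming that the lower neighborhoods of the retained vertices are genuinely unaffected by removing the maximum-index vertex, and that the notions \emph{isolated} and \emph{dominating} in $G$ translate precisely into the two permitted values of $N^<(v_n)$. It is also worth pinning down the edge conventions, treating a single vertex as vacuously both isolated and dominating, so that the induction bottoms out cleanly.
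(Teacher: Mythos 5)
Your proposal is correct. One thing to note at the outset: the paper does not actually prove Theorem~\ref{thm:threshold-order} itself; it cites Chv\'atal and Hammer and treats the result as a known ingredient, so the natural in-paper comparison is the proof of its generalization, Theorem~\ref{thm:order}. Your reverse direction (take the highest-indexed vertex $v_m$ of $W$ and observe that $N_W(v_m) = N^<(v_m)\cap W$, so $v_m$ is isolated or dominating in $G[W]$) is exactly the argument the paper gives for the reverse direction of Theorem~\ref{thm:order}, specialized to $U = V$. Your forward direction, the greedy peeling induction, is the classical Chv\'atal--Hammer argument; it goes through cleanly here because deleting an isolated or dominating vertex of a threshold graph manifestly leaves a threshold graph (every induced subgraph of $G - v_n$ is an induced subgraph of $G$), so the induction needs no further apparatus. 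By contrast, in the special $2$-threshold setting one cannot peel so naively: the paper's forward direction for Theorem~\ref{thm:order} has to construct the total order $\trianglelefteq$ on $V/\sim$ using the nesting properties of Lemma~\ref{lem:nesting}. So your approach buys simplicity and self-containedness for the threshold case, while the paper's heavier machinery is the price of the generalization; your argument would not extend directly to $U$-threshold graphs, which is precisely why the paper develops the order-theoretic framework.
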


We call such an ordering of the vertices of a threshold graph a \textbf{construction order}.

\begin{example} The graph in Figure~\ref{fig:graphex}(d) has a construction order $v_1 = 1$, $v_2 = 5$, $v_3 = 2$, $v_4 = 3$, and $v_5 = 4$.  Indeed, $v_1$ is an initial vertex, $v_2$ and $v_4$ are isolated vertices, and $v_3$ and $v_5$ are dominating vertices, as illustrated below.

\begin{figure}[H]
    \centering
    \begin{tikzpicture}
        \filldraw[opacity=0.25] (0,0) circle (2pt);
        \node [opacity=0.25,below left] at (0,0) {$v_4$};
        \filldraw[opacity=0.25]  (1,0) circle (2pt);
        \node [opacity=0.25,below] at (1,0) {$v_5$};
        \filldraw  (1,1) circle (2pt);
        \node [above left] at (1,1) {$v_1$};
        \filldraw[opacity=0.25]  (2,1) circle (2pt);
        \node [opacity=0.25,above right] at (2,1) {$v_3$};
        \filldraw  (2,0) circle (2pt);
        \node [below right] at (2,0) {$v_2$};
        \draw[opacity=0.25] (0,0)--(1,0);
        \draw[opacity=0.25] (1,0) -- (2,0) -- (2,1) -- (1,1) -- (1,0) -- (2,1);
    \end{tikzpicture}
    \quad
    \begin{tikzpicture}
        \filldraw[opacity=0.25] (0,0) circle (2pt);
        \node [opacity=0.25,below left] at (0,0) {$v_4$};
        \filldraw[opacity=0.25] (1,0) circle (2pt);
        \node [opacity=0.25,below] at (1,0) {$v_5$};
        \filldraw (1,1) circle (2pt);
        \node [above left] at (1,1) {$v_1$};
        \filldraw (2,1) circle (2pt);
        \node [above right] at (2,1) {$v_3$};
        \filldraw (2,0) circle (2pt);
        \node [below right] at (2,0) {$v_2$};
        \draw[opacity=0.25] (0,0)--(1,0);
        \draw[opacity=0.25] (1,0) -- (2,0) -- (2,1) -- (1,1) -- (1,0) -- (2,1);
        \draw (1,1)--(2,1)--(2,0);
    \end{tikzpicture}
    \quad
    \begin{tikzpicture}
        \filldraw (0,0) circle (2pt);
        \node [below left] at (0,0) {$v_4$};
        \filldraw[opacity=0.25] (1,0) circle (2pt);
        \node [opacity=0.25,below] at (1,0) {$v_5$};
        \filldraw (1,1) circle (2pt);
        \node [above left] at (1,1) {$v_1$};
        \filldraw (2,1) circle (2pt);
        \node [above right] at (2,1) {$v_3$};
        \filldraw (2,0) circle (2pt);
        \node [below right] at (2,0) {$v_2$};
        \draw[opacity=0.25] (0,0)--(1,0);
        \draw[opacity=0.25] (1,0) -- (2,0) -- (2,1) -- (1,1) -- (1,0) -- (2,1);
        \draw (1,1)--(2,1)--(2,0);
    \end{tikzpicture}
    \quad
    \begin{tikzpicture}
        \filldraw (0,0) circle (2pt);
        \node [below left] at (0,0) {$v_4$};
        \filldraw (1,0) circle (2pt);
        \node [below] at (1,0) {$v_5$};
        \filldraw (1,1) circle (2pt);
        \node [above left] at (1,1) {$v_1$};
        \filldraw (2,1) circle (2pt);
        \node [above right] at (2,1) {$v_3$};
        \filldraw (2,0) circle (2pt);
        \node [below right] at (2,0) {$v_2$};
        \draw (0,0)--(1,0);
        \draw (1,0) -- (2,0) -- (2,1) -- (1,1) -- (1,0) -- (2,1);
    \end{tikzpicture}
    \caption{A construction order for the vertices of a threshold graph.}
    \label{fig:threshold}
\end{figure}
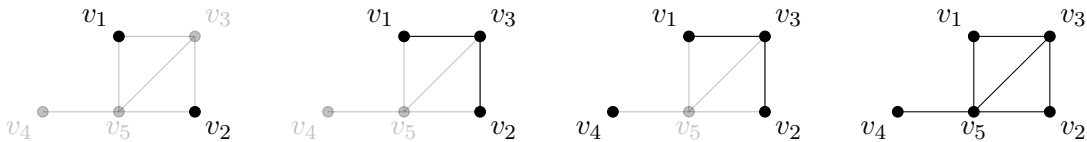
\end{example}

Threshold graphs can also be characterized in terms of forbidden induced subgraphs.

\begin{theorem}[\cite{chvatal_hammer_1977}]\label{thm:thresholdforbidden}
    A graph is threshold if and only if it does not contain an induced subgraph isomorphic to $2K_2$, $P_4$, or $C_4$.
\end{theorem}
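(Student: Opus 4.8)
The plan is to prove both implications through the intermediate notion of \emph{nested neighborhoods}: say that $G$ has nested neighborhoods if for every pair of distinct vertices $u,w$ one has $N(u)\setminus\{w\}\subseteq N(w)\setminus\{u\}$ or $N(w)\setminus\{u\}\subseteq N(u)\setminus\{w\}$. I would show that $G$ is $\{2K_2,P_4,C_4\}$-free if and only if $G$ has nested neighborhoods, and that the latter property forces an isolated or dominating vertex in every induced subgraph. Throughout I would use that both the threshold property and the three forbidden-subgraph conditions are hereditary: an induced subgraph of an induced subgraph is again an induced subgraph, so these properties pass to induced subgraphs automatically.

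For the forward implication, I would first record that threshold graphs are closed under taking induced subgraphs, which is immediate from the definition. Then it suffices to observe that none of $2K_2$, $P_4$, or $C_4$ is threshold: in each case every vertex has degree strictly between $0$ and $n-1$ (all degrees equal $1$ in $2K_2$, lie in $\{1,2\}$ in $P_4$, and equal $2$ in $C_4$), so none of these graphs has an isolated or a dominating vertex. A threshold graph containing any of them as an induced subgraph would therefore contain a non-threshold induced subgraph, a contradiction.

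The heart of the argument is the backward implication, which I would split into two steps. The first, and the step I expect to be the main obstacle, is the equivalence between forbidden-subgraph-freeness and nested neighborhoods; I would prove the contrapositive. If the neighborhoods are not nested, there are distinct vertices $u,w$ and witnesses $a,b$ with $u\sim a$, $w\not\sim a$, $w\sim b$, and $u\not\sim b$; a short check shows $u,w,a,b$ are distinct, so the four edges $ua$, $wb$ and non-edges $ub$, $wa$ are forced. The only undetermined adjacencies are $uw$ and $ab$, and the four resulting cases produce, respectively, an induced $2K_2$ (when $u\not\sim w$ and $a\not\sim b$), an induced $P_4$ (when exactly one of $uw$, $ab$ is an edge), or an induced $C_4$ (when both are edges). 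This is the delicate part because one must verify that the subgraph on $\{u,w,a,b\}$ is genuinely induced, i.e., that all six pairs are accounted for. The converse inclusion is easy, since each forbidden graph exhibits a non-nested pair directly.

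For the second step I would prove that a graph $H$ with nested neighborhoods has an isolated or dominating vertex. I would take a vertex $v$ of maximum degree and argue it is a \emph{top} vertex, meaning $N(u)\setminus\{v\}\subseteq N(v)\setminus\{u\}$ for all $u$: comparing $|N(v)\setminus\{u\}|$ with $|N(u)\setminus\{v\}|$ and using maximality of $\deg(v)$ shows that whenever the reverse inclusion holds the two sets must coincide, so the desired inclusion holds in every case. If $v$ is not dominating, pick $w\not\sim v$; were $w$ to have a neighbor $u\neq v$, the top property applied at $u$ would give $w\in N(u)\setminus\{v\}\subseteq N(v)$, forcing $v\sim w$, a contradiction. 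Hence $w$ is isolated. Applying this to every induced subgraph of $G$ — each of which inherits nested neighborhoods — yields precisely the defining property of a threshold graph, completing the proof.
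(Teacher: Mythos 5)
Your proof is correct, but there is nothing in the paper to compare it against: Theorem~\ref{thm:thresholdforbidden} is stated with a citation to Chv\'atal and Hammer~\cite{chvatal_hammer_1977} and is never proved in the text, so any valid argument here is necessarily ``a different route.'' Your argument is a clean, self-contained version of a classical one. The forward direction (thresholdness is hereditary; $2K_2$, $P_4$, $C_4$ have every degree strictly between $0$ and $n-1$, hence no isolated or dominating vertex) is standard. For the converse, your intermediate nested-neighborhood property is what the literature calls totality of the vicinal preorder, and both halves check out: in the four-vertex analysis the witnesses $a,b$ are automatically distinct from each other and from $u,w$ (since $a\sim u$, $b\not\sim u$, $a\not\sim w$, $b\sim w$), all six adjacencies on $\{u,w,a,b\}$ are pinned down except $uw$ and $ab$, and the four cases give $2K_2$, $P_4$ (twice), and $C_4$; in the maximum-degree argument, comparing $\left|N(v)\setminus\{u\}\right|$ with $\left|N(u)\setminus\{v\}\right|$ is legitimate because the two ``$-1$'' corrections occur simultaneously (adjacency being symmetric), so a reverse inclusion plus maximality of $\deg(v)$ forces the two sets to coincide, and the isolated-vertex conclusion for a non-neighbor $w$ of $v$ follows as you say. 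Together with the easy (but worth stating explicitly) fact that nestedness restricts to induced subgraphs, this yields exactly the paper's definition of threshold. It is also worth noting what your route buys: the nesting lemma you prove is essentially the $U = V$ case of the structural idea the paper develops in Lemma~\ref{lem:nesting} for $U$-threshold graphs, so your proof is the one that generalizes naturally to the special $2$-threshold setting, whereas proofs that proceed by induction on a construction order (Theorem~\ref{thm:threshold-order}) or via vertex weights do not transfer as directly.
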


\begin{figure}[H]
    \centering
        \begin{tikzpicture}
        \node[label=below:$2K_2$] at (0.5,0) {};
        \filldraw (0,0) circle (2pt);
        %\node [below left] at (0,0) {$3$};
        \filldraw (0,1) circle (2pt);
        %\draw (0,1) circle (4pt);
        %\node [above left] at (0,1) {$1$};
        \filldraw (1,1) circle (2pt);
        %\node [above right] at (1,1) {$2$};
        \filldraw (1,0) circle (2pt);
        %\draw (1,0) circle (4pt);
        %\node [below right] at (1,0) {$4$};
        \draw (1,1)--(1,0);
        \draw (0,0)--(0,1);
    \end{tikzpicture}
    \qquad \qquad
    \begin{tikzpicture}
    \node[label=below:$P_4$] at (0.5,0) {};
        \filldraw (0,0) circle (2pt);
        %\node [below left] at (0,0) {$3$};
        \filldraw (0,1) circle (2pt);
        %\draw (0,1) circle (4pt);
        %\node [above left] at (0,1) {$1$};
        \filldraw (1,1) circle (2pt);
        %\node [above right] at (1,1) {$2$};
        \filldraw (1,0) circle (2pt);
        %\draw (1,0) circle (4pt);
        %\node [below right] at (1,0) {$4$};
        \draw (0,0)--(0,1)--(1,1)--(1,0);
    \end{tikzpicture}
    \qquad \qquad
    \begin{tikzpicture}
    \node[label=below:$C_4$] at (0.5,0) {};
        \filldraw (0,0) circle (2pt);
        %\node [below left] at (0,0) {$3$};
        \filldraw (0,1) circle (2pt);
        %\draw (0,1) circle (4pt);
        %\node [above left] at (0,1) {$1$};
        \filldraw (1,1) circle (2pt);
        %\node [above right] at (1,1) {$2$};
        \filldraw (1,0) circle (2pt);
        %\draw (1,0) circle (4pt);
        %\node [below right] at (1,0) {$4$};
        \draw (0,0)--(1,0)--(1,1)--(0,1)--(0,0);
    \end{tikzpicture}
   \caption{The forbidden induced subgraphs of threshold graphs}
    \label{fig:2K2P4C4}
\end{figure}
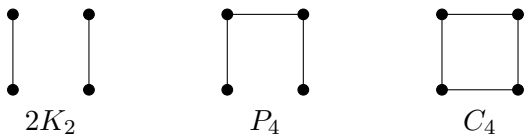

With Theorem~\ref{thm:thresholdforbidden}, one can check that the graph illustrated in Figure~\ref{fig:graphex}(d) is threshold since the deletion of any vertex (or subset of vertices) does not yield a $2K_2$, $P_4$, or $C_4$. The graph illustrated in Figure~\ref{fig:graphex}(e), on the other hand, is \emph{not} threshold since the induced subgraph on $W = \{1,2,3,4\}$ is isomorphic to $P_4$.

Hung, Kloks, and Villaamil extended the definition of threshold graphs to $k$-threshold graphs in \cite{HKV}. A graph $G = (V,E)$ is \textbf{\emph{k}-threshold} for some positive integer $k$ if $V$ can be partitioned into $k$ disjoint, possibly empty, subsets $V_1, \ldots, V_k$ such that for any $W \subseteq V$, the induced subgraph $G[W]$ contains a vertex $v$ such that $N_W(v) = \emptyset$ or $N_W(v) = (W \setminus \{v\}) \cap V_i$ for some $i \in [k]$.  In the case that $k=2$, a graph $G$ is \textbf{special \emph{2}-threshold} if there exists a bipartition $V = V_1 \sqcup V_2$  such that for every subset $W \subseteq V$, the induced subgraph $G[W]$ contains a vertex $v$ such that $N_W(v) = \emptyset$ or $N_W(v) = (W \setminus \{v\}) \cap V_1$.  

\begin{remark}\label{remark:threshold-special}
Every threshold graph $G$ is special $2$-threshold with $V_1 = V(G)$ and $V_2 = \emptyset$.
\end{remark}

Note that the bipartition in the definition of special $2$-threshold is not necessarily unique.  For instances when it is convenient to specify a particular bipartition of a special $2$-threshold graph, we introduce the following definition: A graph $G = (V,E)$ is \textbf{\emph{U}-threshold} for $U \subseteq V$ if for every subset $W \subseteq V$, the induced subgraph $G[W]$ contains a vertex $v$ such that $N_W(v) = \emptyset$ or $N_W(v) = (W \setminus \{v\}) \cap U$. 

\begin{remark}\label{rem:U-threshold}
A graph $G = (V,E)$ is special $2$-threshold if and only if it is $U$-threshold for some $U \subseteq V$ by setting $V_1 = U$ and $V_2 = V \setminus U$ in the respective definitions.
\end{remark}

\newpage

As with threshold graphs, special $2$-threshold graphs can be characterized in terms of forbidden induced subgraphs.

\begin{theorem}[\cite{HKV}]
  \label{conj:forbidden}
  A graph is special $2$-threshold if and only if it does not contain an
  induced subgraph isomorphic to a $2K_2$, $C_5$, House, Gem, Net, Diamond+2P, $W_4$+P, or Octahedron.
\end{theorem}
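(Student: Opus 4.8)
The plan is to prove both directions by analyzing the presence of \emph{good} vertices, where I call a vertex $v$ of an induced subgraph $G[W]$ good for a bipartition $V = V_1 \sqcup V_2$ if $N_W(v) = \emptyset$ or $N_W(v) = (W \setminus \{v\}) \cap V_1$; thus $G$ is special $2$-threshold exactly when some bipartition makes every induced subgraph contain a good vertex. The first thing I would record is that this property is \emph{hereditary}: if $G$ is $U$-threshold and $S \subseteq V$, then $G[S]$ is $(U \cap S)$-threshold, since for $W \subseteq S$ one has $(W \setminus \{v\}) \cap U = (W \setminus \{v\}) \cap (U \cap S)$, so the same good vertices witness the property for the restricted bipartition. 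Consequently, if any one of the eight listed graphs is not special $2$-threshold, then no graph containing it as an induced subgraph can be special $2$-threshold, which is precisely the forward implication once the eight base cases are settled.

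For the forward implication (necessity of the forbidden list) I would check directly that each of $2K_2$, $C_5$, House, Gem, Net, Diamond+2P, $W_4$+P, and Octahedron fails to be special $2$-threshold. To make this finite check manageable I would first isolate two necessary conditions on any valid bipartition: $V_2$ must be an independent set (if $vw$ is an edge with $v,w \in V_2$, then $W = \{v,w\}$ has no good vertex), and $G[V_1]$ must be threshold (for $W \subseteq V_1$ the good-vertex condition collapses to the isolated-or-dominating condition, so Theorem~\ref{thm:threshold-order} applies). These two constraints together with the automorphism group of each forbidden graph reduce the candidate bipartitions $V_1 \sqcup V_2$ to a short list, and for each survivor I would exhibit a witness $W$ — usually $W = V(H)$ itself — admitting no good vertex, exactly as in the direct computation that rules out every choice of $U$ for $C_5$.

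The reverse implication (sufficiency) is the substantial direction. Here the plan is induction on $|V(G)|$ using the elimination reformulation of the definition, which I would establish first: $G$ is $U$-threshold if and only if its vertices can be ordered $v_1, \dots, v_n$ so that $N^<(v_i) = \emptyset$ or $N^<(v_i) = \{v_1, \dots, v_{i-1}\} \cap U$ for every $i$, obtained by repeatedly deleting good vertices and reversing the order. This generalizes Theorem~\ref{thm:threshold-order} and turns the goal into the construction of a single global ordering and set $U = V_1$. Taking $G$ to be a minimal forbidden-subgraph-free counterexample, every proper induced subgraph is special $2$-threshold by induction, so the task reduces to peeling off one vertex and extending its bipartition back to all of $G$. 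The easy cases are an isolated vertex (placed in $V_2$) and a dominating vertex (placed in $V_1$), where one verifies the inherited bipartition of $G - v$ survives re-insertion. Since $2K_2$-, $P_4$-, and $C_4$-freeness would make $G$ threshold by Theorem~\ref{thm:thresholdforbidden}, and hence special $2$-threshold by Remark~\ref{remark:threshold-special}, the graph $G$ is $2K_2$-free yet contains an induced $P_4$ or $C_4$; I would use this as the seed of a case analysis in which the remaining forbidden graphs (House, Gem, Net, Diamond+2P, $W_4$+P, Octahedron) pin down the local structure enough to locate a removable vertex compatible with a consistent $V_1/V_2$ split, guided by the structural picture that $V_1$ induces a threshold graph, $V_2$ is independent, and the bipartite connection between them is nested in a Ferrers-like fashion.

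The main obstacle I anticipate is exactly this bipartition reconciliation in the inductive step: the bipartition supplied for $G - v$ need not be unique, and re-inserting $v$ can force vertices across the $V_1/V_2$ boundary or break the good-vertex witnesses for subsets containing $v$. The role of the larger forbidden subgraphs is to forbid precisely the local configurations in which these conflicts occur, so the crux of the argument is to prove that forbidding all eight graphs guarantees some vertex can always be removed and restored consistently, allowing the elimination ordering — and with it a global bipartition $V = V_1 \sqcup V_2$ — to be assembled. I expect the bulk of the effort, and the delicate interaction with the forbidden list, to reside in enumerating and eliminating these obstructing configurations.
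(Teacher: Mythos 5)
Note first that the paper itself does not prove Theorem~\ref{conj:forbidden}: it is quoted, with citation, from Hung, Kloks, and Villaamil~\cite{HKV}, so there is no internal proof to compare against and your attempt must stand on its own. Your necessity direction is sound in outline. The hereditary claim is correct (for $W \subseteq S$ one indeed has $(W\setminus\{v\})\cap U = (W\setminus\{v\})\cap(U\cap S)$), and your two constraints on any valid bipartition --- $V_2$ independent and $G[V_1]$ threshold --- are both valid (the first is the paper's Lemma~\ref{lem:nesting}(a), the second follows because the good-vertex condition restricted to $W \subseteq V_1$ collapses to isolated-or-dominating). With these, the eight finite verifications are routine, although you do not actually execute even one of them; the gesture toward ``the direct computation that rules out every choice of $U$ for $C_5$'' refers to a computation you never exhibit.

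The genuine gap is the sufficiency direction, which is the entire substance of the theorem and which your proposal describes rather than proves. After reducing to a minimal counterexample $G$ that is $2K_2$-free but contains an induced $P_4$ or $C_4$, your argument becomes a statement of intent: that the remaining forbidden graphs ``pin down the local structure enough to locate a removable vertex compatible with a consistent $V_1/V_2$ split.'' That sentence \emph{is} the theorem; no case of the required analysis is carried out, no invariant is formulated under which the induction would close, and you yourself identify the fatal obstacle (the bipartition of $G-v$ is not unique, and re-inserting $v$ can force vertices across the $V_1/V_2$ boundary) without resolving it. Indeed it is not even argued that a ``removable'' vertex exists in a forbidden-subgraph-free graph, which is the crux. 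A workable route would be to derive, directly from the exclusion of the eight graphs, the structural picture that the paper obtains in the opposite direction from the definition --- a partition $V = U \sqcup U^c$ with $U^c$ independent, $G[U]$ threshold, and neighborhoods nested as in Lemma~\ref{lem:nesting} --- and then assemble the construction order of Theorem~\ref{thm:order}; but extracting that structure from the forbidden list is exactly the work, presumably done in~\cite{HKV}, that your sketch leaves undone.
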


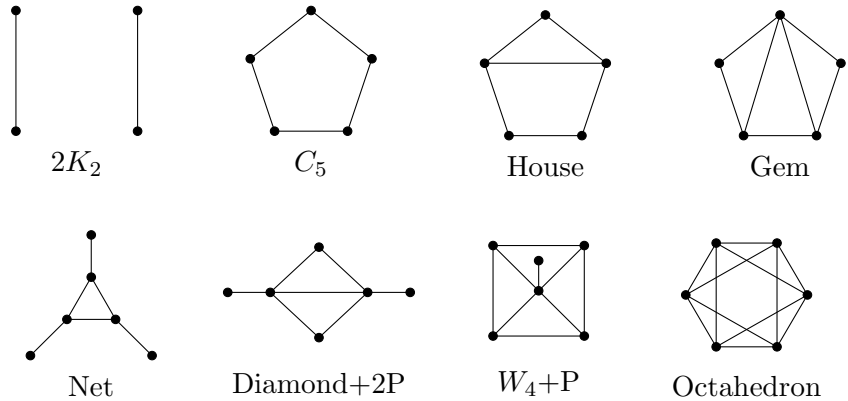
\begin{figure}[H]
  \centering
  \begin{tikzpicture}[scale=0.8]
    \node[label=below:$2K_2$] at (1,0) {};
    \filldraw[] (0,0) circle (2pt);
    \filldraw[] (2,0) circle (2pt);
    \filldraw[] (0,2) circle (2pt);
    \filldraw[] (2,2) circle (2pt);
    \draw[] (0,0) -- (0,2);
    \draw[] (2,0) -- (2,2);
  \end{tikzpicture}
  \hspace{1.1cm}
  \begin{tikzpicture}[scale=0.8]
    \node[label=below:$C_5$] at (1,0) {};
    \filldraw[] (1,2) circle (2pt);
    \filldraw[] (0,1.2) circle (2pt);
    \filldraw[] (2,1.2) circle (2pt);
    \filldraw[] (0.4,0) circle (2pt);
    \filldraw[] (1.6,0) circle (2pt);
    \draw[] (1,2) -- (0, 1.2) -- (0.4, 0) -- (1.6, 0) -- (2, 1.2) -- (1,2);
  \end{tikzpicture}
  \hspace{1.1cm}
  \begin{tikzpicture}[scale=0.8]
    \node[label=below:House] at (1,0) {};
    \filldraw[] (1,2) circle (2pt);
    \filldraw[] (0,1.2) circle (2pt);
    \filldraw[] (2,1.2) circle (2pt);
    \filldraw[] (0.4,0) circle (2pt);
    \filldraw[] (1.6,0) circle (2pt);
    \draw[] (1,2) -- (0, 1.2) -- (0.4, 0) -- (1.6, 0) -- (2, 1.2) -- (1,2);
    \draw[] (0, 1.2) -- (2, 1.2);
  \end{tikzpicture}
  \hspace{1.1cm}
  \begin{tikzpicture}[scale=0.8]
    \node[label=below:Gem] at (1,0) {};
    \filldraw[] (1,2) circle (2pt);
    \filldraw[] (0,1.2) circle (2pt);
    \filldraw[] (2,1.2) circle (2pt);
    \filldraw[] (0.4,0) circle (2pt);
    \filldraw[] (1.6,0) circle (2pt);
    \draw[] (1,2) -- (0, 1.2) -- (0.4, 0) -- (1.6, 0) -- (2, 1.2) -- (1,2);
    \draw[] (0.4, 0) -- (1,2) -- (1.6, 0);
  \end{tikzpicture}\\
  \vspace{0.5cm}
  \begin{tikzpicture}[scale=0.8]
    \node[label=below:Net] at (1,0) {};
    \filldraw[] (1,2) circle (2pt);
    \filldraw[] (1,1.3) circle (2pt);
    \filldraw[] (0.6, 0.6) circle (2pt);
    \filldraw[] (0,0) circle (2pt);
    \filldraw[] (2,0) circle (2pt);
    \filldraw[] (1.4,0.6) circle (2pt);
    \draw[] (1,2) -- (1,1.3) -- (0.6, 0.6);
    \draw[] (0,0) -- (0.6, 0.6) -- (1.4, 0.6);
    \draw[] (2,0) -- (1.4, 0.6) -- (1, 1.3);
  \end{tikzpicture}
  \hspace{0.6cm}
    \begin{tikzpicture}[scale=0.8]
    \node[label=below: Diamond+2P] at (1.5,0) {};
    \filldraw[] (0,1) circle (2pt);
    \filldraw[] (3,1) circle (2pt);
    \filldraw[] (0.7,1) circle (2pt);
    \filldraw[] (2.3,1) circle (2pt);
    \filldraw[] (1.5,0.25) circle (2pt);
    \filldraw[] (1.5,1.75) circle (2pt);
    \draw[] (0, 1) -- (3, 1);
    \draw[] (0.7,1) -- (1.5, 0.25) -- (2.3, 1) -- (1.5, 1.75) -- (0.7, 1);
  \end{tikzpicture}
  \hspace{0.5cm}
  \begin{tikzpicture}[scale=0.8]
    \node[label=below:{$W_4$+P}] at (1,0) {};
    \filldraw[white] (2,2) circle (2pt);
    \filldraw[white] (0,0) circle (2pt);
    \filldraw[] (0.25, 1.75) circle (2pt);
    \filldraw[] (0.25, 0.25) circle (2pt);
    \filldraw[] (1.75, 1.75) circle (2pt);
    \filldraw[] (1.75, 0.25) circle (2pt);
    \filldraw[] (1, 1) circle (2pt);
    \filldraw[] (1, 1.5) circle (2pt);
    \draw[] (0.25,0.25) rectangle (1.75,1.75);
    \draw[] (0.25,0.25) -- (1, 1) -- (1.75, 1.75);
    \draw[] (0.25, 1.75) -- (1, 1) -- (1.75, 0.25);
    \draw[] (1, 1) -- (1,1.5) ;
  \end{tikzpicture}
  \hspace{0.5cm}
  \begin{tikzpicture}[scale=0.8]
    \node[label=below: Octahedron] at (1,0) {};
    \filldraw[] (0,1) circle (2pt);
    \filldraw[] (2,1) circle (2pt);
    \filldraw[] (0.5, 1.86) circle (2pt);
    \filldraw[] (1.5, 1.86) circle (2pt);
    \filldraw[] (0.5, 0.14) circle (2pt);
    \filldraw[] (1.5, 0.14) circle (2pt);
    \draw[] (0.5, 1.86) rectangle (1.5, 0.14);
    \draw[] (0,1) -- (1.5, 0.14) -- (2,1) -- (0.5, 1.86) -- (0,1);
    \draw[] (0,1) -- (0.5, 0.14) -- (2,1) -- (1.5, 1.86) -- (0,1);
  \end{tikzpicture}
  \caption{The forbidden induced subgraphs for special $2$-threshold graphs.}
  \label{fig:forbidden}
\end{figure}

With Theorem~\ref{conj:forbidden}, one can verify that while the graph illustrated in Figure~\ref{fig:graphex}(e) is not a threshold graph, it is a special $2$-threshold graph since the removal of any subset of vertices does not yield one of the eight forbidden induced subgraphs illustrated in Figure~\ref{fig:forbidden}.  

\subsubsection{Ferrers Graphs} The final class of graphs we consider in this article are Ferrers graphs, which are bipartite graphs associated with Ferrers diagrams of integer partitions, first studied by Ehrenborg and van Willigenburg in \cite{E-VW}. A \textbf{partition} of a positive integer $n$ is a weakly decreasing sequence of positive integers $\lambda = (\lambda_1 , \dots , \lambda _m)$ that sum to $n$. The \textbf{Ferrers diagram} associated to $\lambda$ is an arrangement of $n$ square boxes into $m$ left-justified rows whose $i$th row consists of $\lambda_i$ boxes and the \textbf{Ferrers graph} associated to $\lambda$ is the graph on $m+\lambda_1$ vertices indexed by the rows and columns of its Ferrers diagram whose edges are the row-column pairs that contain boxes.

\begin{example}
The Ferrers diagram and Ferrers graph of the partition $(3,2,2,1)$ of $8$ are illustrated in Figure~\ref{fig:ferrers}.

\begin{figure}[H]
    \centering
    \begin{tikzpicture}[scale=0.60]
    \draw (0,0) grid (1,4);
    \draw (1,1) grid (2,4);
    \draw (2,3) grid (3,4);
    \node[left] at (0,0.5) {$r_4$};
    \node[left] at (0,1.5) {$r_3$};
    \node[left] at (0,2.5) {$r_2$};
    \node[left] at (0,3.5) {$r_1$};
    \node[above] at (0.5,4) {$c_1$};
    \node[above] at (1.5,4) {$c_2$};
    \node[above] at (2.5,4) {$c_3$};
    \draw[line width=2pt] (0,0) -- (1, 0) -- (1, 1) -- (2, 1) -- (2, 3) -- (3, 3) -- (3, 4);
    \end{tikzpicture}
    \qquad \qquad
    \begin{tikzpicture}
        \filldraw (0,0) circle (2pt);
        \node[below] at (0,0) {$r_1$};
        \filldraw (1,0) circle (2pt);
        \node[below] at (1,0) {$r_2$};
        \filldraw (2,0) circle (2pt);
        \node[below] at (2,0) {$r_3$};
        \filldraw (3,0) circle (2pt);
        \node[below] at (3,0) {$r_4$};
        \filldraw (0.5,1.5) circle (2pt);
        \node[above] at (0.5,1.5) {$c_1$};
        \filldraw (1.5,1.5) circle (2pt);
        \node[above] at (1.5,1.5) {$c_2$};
        \filldraw (2.5,1.5) circle (2pt);
        \node[above] at (2.5,1.5) {$c_3$};
        \draw (0.5,1.5) -- (0,0);
        \draw (0.5,1.5) -- (1,0);
        \draw (0.5,1.5) -- (2,0);
        \draw (0.5,1.5) -- (3,0);
        \draw (1.5,1.5) -- (0,0);
        \draw (1.5,1.5) -- (1,0);
        \draw (1.5,1.5) -- (2,0);
        \draw (2.5,1.5) -- (0,0);
    \end{tikzpicture}
    
    \caption{The Ferrers diagram (left) and Ferrers graph (right) of the partition $(3,2,2,1)$ of $8$.}
    \label{fig:ferrers}
\end{figure}
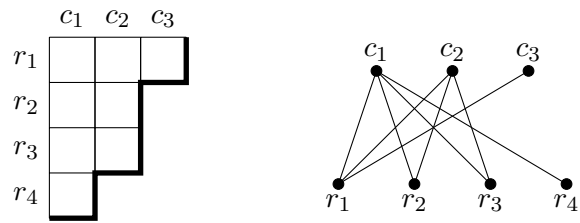
\end{example}

A closer look at the graph in Figure~\ref{fig:graphex}(f) reveals that it is identical to the Ferrers graph in Figure~\ref{fig:ferrers}.  In general, a bipartite graph $G=(V,E)$ is Ferrers if $V$ can be partitioned as $R\sqcup C$ with $R = \{r_1,\dots,r_n\}$ and $C=\{c_1,\dots,c_m\}$ such that
\begin{enumerate}
    \item $\{r_1 ,c_m\} \in E$ and $\{r_n, c_1\} \in E$; and
    \item if $\{r_k, c_\ell\}\in E$, then $\{r_i, c_j\} \in E$ for all $1\leq i \leq k, 1 \leq j \leq \ell.$
\end{enumerate}
Setting $R_k = \Big\{r_i \ \Big| \ \underset{c_j \in N(r_i)}{\max} j = k \Big\}$ for $1 \leq k \leq m$, we define the \textbf{traversal} of $G$ as the refinement of the ordering $$c_1 \leq R_1 \leq c_2 \leq R_2 \leq \cdots \leq c_m \leq R_m$$ with the elements of $R_k$ arranged in decreasing order of index.  For instance, the traversal for the Ferrers graph in Figure~\ref{fig:ferrers} is $c_1, r_4, c_2, r_3, r_2, c_3, r_1$ since $R_1 = \{r_4\}$, $R_2 = \{r_2,r_3\}$, and $R_3 = \{r_1\}$.  The traversal of a Ferrers graph can be read off easily from its associated Ferrers diagram by recording the vertex corresponding to each step along the bottom boundary of the diagram beginning at the bottom left corner and terminating at the top right corner, as highlighted with bold line segments in Figure~\ref{fig:ferrers}.

As with threshold and special $2$-threshold graphs, Ferrers graphs have a forbidden subgraph characterization.   

\begin{theorem}[\cite{liu}]\label{thm:Ferrersforbidden}
    A connected bipartite graph is Ferrers if and only if it does not contain an induced matching, i.e., an induced subgraph isomorphic to $2K_2$.
\end{theorem}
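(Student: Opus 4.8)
The plan is to reduce the theorem to a statement about the neighborhoods of the vertices on each side of the bipartition. Writing $G = (R \sqcup C, E)$, I would first isolate the following lemma: $G$ contains no induced $2K_2$ if and only if the neighborhoods of the vertices in $R$ form a chain under inclusion, i.e. $N(r) \subseteq N(r')$ or $N(r') \subseteq N(r)$ for all $r, r' \in R$ (and symmetrically for $C$). The equivalence is immediate from the bipartite structure: if $N(r)$ and $N(r')$ are incomparable, choosing $c \in N(r) \setminus N(r')$ and $c' \in N(r') \setminus N(r)$ yields four distinct vertices inducing a $2K_2$; conversely, an induced $2K_2$ with edges $\{r,c\}$ and $\{r',c'\}$ forces $\{r,c'\}, \{r',c\} \notin E$, so $c \in N(r) \setminus N(r')$ and $c' \in N(r') \setminus N(r)$, certifying incomparability. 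This lemma is the heart of the argument and turns both directions into bookkeeping about nested sets.

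For the forward direction, I would assume $G$ is Ferrers and use condition (2) directly. If $\{r_k, c_\ell\}$ and $\{r_{k'}, c_{\ell'}\}$ were the edges of an induced $2K_2$, then all four endpoints are distinct, so $k \ne k'$ and $\ell \ne \ell'$; after relabeling assume $k < k'$. If $\ell < \ell'$, applying condition (2) to the edge $\{r_{k'}, c_{\ell'}\}$ with indices $k \le k'$ and $\ell' \le \ell'$ produces the cross-edge $\{r_k, c_{\ell'}\} \in E$; if $\ell > \ell'$, applying (2) to $\{r_k, c_\ell\}$ with $k \le k$ and $\ell' \le \ell$ produces $\{r_k, c_{\ell'}\} \in E$. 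Either way a cross-edge appears, contradicting that the subgraph is induced.

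For the backward direction, I would start from a connected bipartite $G$ with no induced $2K_2$, invoke the lemma to conclude that the row-neighborhoods and the column-neighborhoods each form a chain, and then order each side by non-increasing degree, $N(r_1) \supseteq \cdots \supseteq N(r_n)$ and $N(c_1) \supseteq \cdots \supseteq N(c_m)$ (ties are harmless, since nested sets of equal cardinality are equal). Connectivity forces $N(r_1) = C$ and $N(c_1) = R$: every column is non-isolated, hence lies in some $N(r_i) \subseteq N(r_1)$, so $N(r_1) = C$, and symmetrically $N(c_1) = R$; this gives the corner conditions $\{r_1, c_m\} \in E$ and $\{r_n, c_1\} \in E$ of (1). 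For condition (2) I would verify two separate monotonicities: if $\{r_i, c_j\} \in E$ and $i' \le i$, then $N(r_{i'}) \supseteq N(r_i) \ni c_j$, so $\{r_{i'}, c_j\} \in E$; symmetrically $j' \le j$ gives $\{r_i, c_{j'}\} \in E$. Composing the two shows $\{r_i, c_j\} \in E$ implies $\{r_{i'}, c_{j'}\} \in E$ for all $i' \le i$ and $j' \le j$, which is exactly condition (2).

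The main obstacle I anticipate is not any individual computation but confirming that the two chain orderings, one obtained from sorting $R$ and one from sorting $C$, are mutually compatible, so that a single simultaneous relabeling of both sides realizes the staircase shape of a Ferrers diagram. The point that resolves this is that the row-monotonicity and the column-monotonicity above are independent consequences of the two \emph{separate} chain conditions supplied by the lemma; hence sorting each side by its own degree sequence suffices, with no further coordination, and the only degenerate cases (isolated vertices or an empty side) are ruled out by connectivity.
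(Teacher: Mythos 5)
Your proof is correct. The paper itself does not prove Theorem~\ref{thm:Ferrersforbidden} at all --- it imports the statement from Liu's work --- so there is no internal argument to compare against; judged on its own, your argument is complete. It is also essentially the standard proof, and it closely parallels machinery the paper builds later for a related statement: your key lemma, that a bipartite graph is $2K_2$-free if and only if the neighborhoods on each side form a chain under inclusion, is the bipartite counterpart of Lemma~\ref{lem:nesting}(b)--(c), and your backward direction (sort each side by degree, get the corner edges $\{r_1,c_m\}$ and $\{r_n,c_1\}$ from connectivity, then verify the staircase condition) is precisely the argument sketched in the final corollary of Section~\ref{sec:nuclear}, where a connected $U$-threshold graph with $U$ independent is shown to be Ferrers by ``ordering the vertices in each part by neighborhood containment.'' Your closing observation is the right resolution of the only subtle point: no coordination between the two orderings is needed, because condition (2) of the Ferrers definition factors into a row-monotonicity and a column-monotonicity, each supplied independently by its own chain, and composing $\{r_i,c_j\}\in E \Rightarrow \{r_{i'},c_j\}\in E \Rightarrow \{r_{i'},c_{j'}\}\in E$ finishes it. One pedantic caveat, which is a defect of the theorem statement rather than of your proof: the one-vertex graph is connected, bipartite, and $2K_2$-free, yet is not Ferrers under the paper's definition (both parts must be nonempty to host the corner edges), so ``connected'' should be read as ``connected with at least one edge''; your remark that connectivity rules out an empty side silently assumes this.
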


Since every forbidden induced subgraph of a special $2$-threshold graph apart from $2K_2$ contains an odd cycle, it follows directly from Theorems~\ref{conj:forbidden} and \ref{thm:Ferrersforbidden} that every Ferrers graph is special $2$-threshold.  That result can also be proved directly from the definitions. 

\begin{proposition}
  \label{prop:ferrersqt}
  Every Ferrers graph is special $2$-threshold.
\end{proposition}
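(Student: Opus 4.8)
The plan is to verify the definition of special $2$-threshold directly, using the nested neighborhood structure enforced by condition~(2) in the definition of a Ferrers graph. By Remark~\ref{rem:U-threshold}, it suffices to exhibit a single set $U \subseteq V$ for which $G$ is $U$-threshold, and I would take $U = R$ (equivalently, set $V_1 = R$ and $V_2 = C$). The structural input I would record first is that, writing $R = \{r_1,\dots,r_n\}$ and $C = \{c_1,\dots,c_m\}$ as in the definition, condition~(2) forces each neighborhood to be an initial segment: fixing the row index shows $N(r_i) = \{c_1,\dots,c_{d_i}\}$ for some $d_i$, and fixing the column index shows $N(c_j) = \{r_1,\dots,r_{e_j}\}$ for some $e_j$. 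In particular the column neighborhoods are nested, with smaller indices giving larger neighborhoods.

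With $U = R$ fixed, I would take an arbitrary nonempty $W \subseteq V$ and write $W_R = W \cap R$ and $W_C = W \cap C$; the goal is to produce a vertex $v \in W$ with $N_W(v) = \emptyset$ or $N_W(v) = (W \setminus \{v\}) \cap R$. Because $G$ is bipartite, the latter condition is only interesting when $v$ is a column: for $v = c_j$ it reads $N_W(c_j) = W_R$, i.e. $c_j$ is adjacent to every row of $W$, whereas for $v = r_i \in R$ both sides would have to be empty, a situation already covered by the isolated case. So the argument reduces to two cases. If $W_C = \emptyset$ or $W_R = \emptyset$, then $G[W]$ consists of mutually nonadjacent vertices, so any vertex is isolated and $N_W(v) = \emptyset$. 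Otherwise both parts are nonempty, and if $G[W]$ happens to contain an isolated vertex we are again done immediately.

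The substantive case is when both $W_R$ and $W_C$ are nonempty and $G[W]$ has no isolated vertex. Here I would let $c_s$ be the column of smallest index in $W_C$ and claim that $c_s$ is adjacent to every row of $W_R$, which gives exactly $N_W(c_s) = W_R = (W \setminus \{c_s\}) \cap R$. To prove the claim, suppose some $r_i \in W_R$ were not adjacent to $c_s$. Since $N(r_i) = \{c_1,\dots,c_{d_i}\}$ is an initial segment not containing $c_s$, we get $d_i < s$; but every column of $W_C$ has index at least $s$ by the minimality of $s$, so $r_i$ has no neighbor in $W_C$ and hence, being a row in a bipartite graph, no neighbor in $W$ at all, contradicting the assumption that $G[W]$ has no isolated vertex.

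I expect the only real content to be this last claim, and its proof is short once the initial-segment description of neighborhoods is in hand; the main thing to get right is the bookkeeping of the degenerate cases (one side of $W$ empty, or an isolated vertex already present) so that every nonempty induced subgraph is handled uniformly. Note that rows and columns play symmetric roles in the staircase, so the symmetric choice $U = C$ would work equally well. This yields a self-contained proof from the definitions, independent of the forbidden-subgraph characterization in Theorem~\ref{thm:Ferrersforbidden}.
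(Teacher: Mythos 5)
Your proof is correct, and it is essentially the paper's own argument run in mirror image: the paper also verifies the definition of special $2$-threshold directly from the staircase condition (2), using an extremal column choice, rather than going through Theorem~\ref{thm:Ferrersforbidden}. The difference is which side plays the role of $V_1$. The paper takes $V_1 = C$: it picks the \emph{highest}-indexed column $c$ in $W \cap C$, and either $c$ is isolated in $G[W]$ or any row $r$ adjacent to $c$ satisfies $N_W(r) = W \cap C$, since adjacency to $c$ propagates to all lower-indexed columns. You take $V_1 = R$ and show the \emph{lowest}-indexed column of $W \cap C$ dominates $W \cap R$; by the row/column symmetry you note at the end, the two arguments are interchangeable. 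The one point worth flagging is downstream use: when the paper later specializes Theorem~\ref{thm:quasienum} to recover Theorem~\ref{thm:ferrersenum}, and again in the corollary closing Section~\ref{sec:nuclear}, it cites Proposition~\ref{prop:ferrersqt} specifically for the bipartition $U = C$ with $D = R$; your proof as written delivers $U = R$ instead, so those passages would need your closing symmetry remark (or a wholesale swap of $R$ and $C$) to go through verbatim. Also, your case analysis is slightly heavier than necessary --- the paper branches only on whether its single extremal column is isolated, not on whether $G[W]$ contains any isolated vertex anywhere --- but that is cosmetic and does not affect correctness.
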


\begin{proof}
Let $G = (V,E)$ be a Ferrers graph with $V = R \sqcup C$ and let $W \subseteq V$.  If $W \cap C = \emptyset$, then $W \subseteq R$, which means every vertex of $G[W]$ is isolated.  Otherwise, let $c$ be the highest indexed vertex in $W \cap C$.  If $c$ is not isolated in $G[W]$, then there exists an $r \in W \cap R$ that is adjacent to $c$.  Since $G$ is Ferrers, $r$ must be adjacent to every other vertex in $W \cap C$.  Thus, $G$ is a special $2$-threshold graph with $V_1 = C$ and $V_2 = R$. 
\end{proof}

We have seen that threshold and Ferrers graphs are both subclasses of special $2$-threshold graphs.  Note that there are special $2$-threshold graphs that are neither threshold nor Ferrers.  For instance, the graph in Figure~\ref{fig:graphex}(e), which we already established to be special $2$-threshold but not threshold, cannot be Ferrers since it is not bipartite.  

\subsection{Spanning Tree Enumeration}\label{sec:enum} There are numerous contexts (e.g., see \cite{atajan_inaba_04} or \cite{ma_18}) in which it is valuable to know the number of spanning trees $\tau(G)$ in a graph $G$.  Calculating $\tau(G)$, however, is not as trivial as one might expect.  For instance, there is no obvious systematic way to determine that the graph in Figure~\ref{fig:graphex}(a) has $11$ spanning trees without listing them out, as shown in Figure~\ref{fig:spanningex}.  For certain classes of graphs, however, it is possible to determine the number of spanning trees from the number of vertices in a graph or the degrees of its vertices.    

We proceed to survey several well known spanning tree enumeration formulae from throughout the literature beginning with Cayley's Formula for the number of spanning trees in a complete graph. While commonly attributed to Cayley~\cite{cayley}, this formula was first proved by Borchardt~\cite{Borchardt} and has since seen numerous different proofs throughout the enumerative combinatorics literature.  See the books by Aigner and Ziegler~\cite{Aigner-Ziegler} and Stanley~\cite{EC1} for more details.

\begin{theorem}[Cayley's Formula]\label{thm:completeenum}
The number of spanning trees in the complete graph on $n$ vertices is $$\tau (K_n) = n^{n-2}.$$
\end{theorem}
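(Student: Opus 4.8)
The plan is to invoke Kirchhoff's Matrix-Tree theorem and then exploit the fact that the Laplacian of $K_n$ differs from a scalar multiple of the identity by a single rank-one matrix---precisely the kind of structure this article is built around.

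First I would record the Laplacian explicitly. Since every vertex of $K_n$ has degree $n-1$ and every off-diagonal entry of its adjacency matrix equals $1$, its Laplacian is
$$L(K_n) = (n-1)I_n - (J_n - I_n) = nI_n - J_n,$$
where $I_n$ is the $n \times n$ identity matrix and $J_n = \mathbf{1}\mathbf{1}^\top$ is the all-ones matrix. By the Matrix-Tree theorem, $\tau(K_n)$ equals any cofactor of $L(K_n)$, so deleting the last row and column gives
$$\tau(K_n) = \det\!\left(nI_{n-1} - J_{n-1}\right).$$

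The remaining step is to evaluate this determinant, and here the rank-one structure does all the work. Writing $J_{n-1} = \mathbf{1}\mathbf{1}^\top$ with $\mathbf{1} \in \mathbb{R}^{n-1}$, the matrix $nI_{n-1} - \mathbf{1}\mathbf{1}^\top$ is a rank-one perturbation of the invertible matrix $nI_{n-1}$, so the matrix determinant lemma yields
$$\det\!\left(nI_{n-1} - \mathbf{1}\mathbf{1}^\top\right) = \det(nI_{n-1})\left(1 - \mathbf{1}^\top (nI_{n-1})^{-1}\mathbf{1}\right) = n^{n-1}\left(1 - \tfrac{n-1}{n}\right) = n^{n-2}.$$
Equivalently, one can diagonalize: $J_{n-1}$ has eigenvalue $n-1$ once and eigenvalue $0$ with multiplicity $n-2$, so $nI_{n-1} - J_{n-1}$ has eigenvalues $1$ and $n$ with multiplicities $1$ and $n-2$, and its determinant is again $1 \cdot n^{n-2}$.

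There is no genuine obstacle to overcome: the entire content of the argument is the observation that $L(K_n)$ deviates from $nI_n$ by the single rank-one matrix $J_n$, after which either the matrix determinant lemma or a one-line eigenvalue computation finishes the proof. I would present it this way precisely because it is the simplest instance of the rank-one perturbation philosophy developed in the rest of the article---the troublesome cofactor is replaced by the determinant of a matrix whose only deviation from being diagonal has rank one.
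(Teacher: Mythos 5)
Your proof is correct: $L(K_n) = nI_n - J_n$ (where $J_n$ denotes the all-ones matrix), the Matrix-Tree Theorem reduces $\tau(K_n)$ to $\det(nI_{n-1}-J_{n-1})$, and either the matrix determinant lemma or the eigenvalue count evaluates that determinant as $n^{n-2}$. Note, however, that the paper never proves Theorem~\ref{thm:completeenum} itself; it is surveyed as a known result with citations. More interestingly, the machinery the paper \emph{does} develop gives a strictly shorter argument than yours, and the difference is instructive. Lemma~\ref{lem:klee-stamps}, in its Temperley special case $\va=\vb=\mathbf{1}$, says $\det\left(L+\mathbf{1}\mathbf{1}^T\right) = n\cdot n\cdot\tau(G)$; applied to $K_n$ this reads $\det(nI_n) = n^2\,\tau(K_n)$, since $L(K_n)+J_n = nI_n$, so $\tau(K_n) = n^n/n^2 = n^{n-2}$ with no cofactor and no matrix determinant lemma at all. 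Both arguments exploit exactly the same structural fact---that $L(K_n)$ is a rank-one perturbation of $nI_n$---but you cash it in \emph{after} deleting a row and column (forcing a second determinant evaluation on the principal submatrix), whereas the perturbation lemma lets you cash it in on the full Laplacian, where the perturbed matrix is already diagonal. Your route buys independence from Lemma~\ref{lem:klee-stamps}, needing only Theorem~\ref{kirchhoff} plus standard linear algebra; the paper's route is the cleanest possible instance of the rank-one perturbation philosophy you allude to, carried one step further than you carried it.
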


\begin{example}
By Theorem~\ref{thm:completeenum}, the complete graph on $4$ vertices has $\tau(K_4) = 4^{4-2}=16$ spanning trees, which we illustrate in Figure~\ref{fig:completeex} of the appendix.
\end{example}

Cayley's Formula has been generalized to complete multipartite graphs with different proofs by Austin~\cite{Austin}, Klee and Stamps~\cite{klee_stamps}, Lewis~\cite{Lewis}, and Onodera~\cite{Onodera}.

\begin{theorem}\label{thm:cmenum}
Let $n_1,\dots,n_k$ be positive integers and let $n=n_1 + \cdots + n_k.$ The number of spanning trees in the complete multipartite graph $K_{n_1,\dots,n_k}$ is
$$\tau(K_{n_1,\dots,n_k}) = n^{k-2} \prod_{i=1}^{k}(n-n_i)^{n_i - 1}.$$
\end{theorem}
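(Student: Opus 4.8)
The plan is to apply the rank-one perturbation technique of Klee and Stamps~\cite{klee_stamps} together with Kirchhoff's Matrix-Tree theorem~\cite{kirchhoff}. Write $V = V_1 \sqcup \cdots \sqcup V_k$ for the parts of $K_{n_1,\dots,n_k}$ and order the vertices so that those in $V_i$ precede those in $V_{i+1}$. In this order the Laplacian decomposes as $L = D - A$, where $D$ is the diagonal matrix whose entries for vertices of $V_i$ equal $n - n_i$ (the common degree of such a vertex), and the adjacency matrix is $A = J - B$ with $J = \mathbf{1}\mathbf{1}^T$ the full all-ones matrix and $B = \mathrm{blockdiag}(J_{n_1},\dots,J_{n_k})$ the block-diagonal matrix of all-ones blocks. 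Thus $L = D - J + B$, and the crucial observation is that the only contribution outside the block-diagonal structure is the rank-one term $-J = -\mathbf{1}\mathbf{1}^T$.

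This immediately suggests the perturbation $\va = \vb = \mathbf{1}$, since then
$$L + \va\vb^T = L + J = D + B$$
is block diagonal with $i$th block equal to $(n - n_i)I_{n_i} + J_{n_i}$. Recall the rank-one perturbation lemma: for the Laplacian of a connected graph $G$ on $n$ vertices and any vectors $\va,\vb$ with $(\mathbf{1}^T\va)(\mathbf{1}^T\vb) \neq 0$, one has $\tau(G) = \det(L + \va\vb^T)/\big((\mathbf{1}^T\va)(\mathbf{1}^T\vb)\big)$; this follows from the matrix determinant lemma together with $\det L = 0$ and the identity $\mathrm{adj}(L) = \tau(G)\,J$, the latter because every cofactor of a connected graph's Laplacian equals $\tau(G)$. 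With $\va = \vb = \mathbf{1}$ the denominator is simply $(\mathbf{1}^T\mathbf{1})^2 = n^2$.

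It then remains to compute the block-diagonal determinant, which is elementary. Since $J_{n_i}$ has eigenvalue $n_i$ on $\mathbf{1}$ and eigenvalue $0$ with multiplicity $n_i - 1$, the block $(n-n_i)I_{n_i} + J_{n_i}$ has eigenvalues $n$ (once) and $n - n_i$ (with multiplicity $n_i - 1$), so its determinant is $n\,(n-n_i)^{n_i - 1}$. Multiplying over $i$ gives $\det(L + J) = n^k \prod_{i=1}^k (n-n_i)^{n_i-1}$, and dividing by $n^2$ yields the claimed formula. There is no genuine obstacle here: the single idea is recognizing that $L$ differs from a block-diagonal matrix by exactly $-\mathbf{1}\mathbf{1}^T$, after which the argument reduces to the spectrum of $cI + J$. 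As a sanity check, taking $k = n$ with every $n_i = 1$ recovers $\tau = n^{n-2}$, i.e.\ Cayley's Formula (Theorem~\ref{thm:completeenum}).
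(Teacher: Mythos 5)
Your proof is correct, and it is essentially the approach this paper champions: the paper states Theorem~\ref{thm:cmenum} without proof (citing Klee--Stamps among others), and your argument is precisely an application of the paper's Lemma~\ref{lem:klee-stamps} in its Temperley special case $\va=\vb=\mathbf{1}$, reducing $\tau$ to the determinant of the block-diagonal matrix $D+B$ with blocks $(n-n_i)I_{n_i}+J_{n_i}$, exactly as in the cited Klee--Stamps proof. The only point worth noting is that your perturbation is block-diagonal rather than triangular, which is unavoidable here, since complete multipartite graphs (e.g.\ the octahedron $K_{2,2,2}$) are in general not special $2$-threshold and so, by Theorem~\ref{thm:quasiupdate}, their Laplacians admit no triangular rank-one perturbation.
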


In the special case of complete bipartite graphs, the formula in Theorem~\ref{thm:cmenum} can be simplified with distinct proofs by Hartsfield and Werth~\cite{Hartsfield-Werth} and Scoins~\cite{Scoins}.

\begin{theorem}\label{thm:bipartiteenum}
The number of spanning trees in the complete bipartite graph $K_{m,n}$ is $$\tau(K_{m,n})=m^{n-1}n^{m-1}.$$
\end{theorem}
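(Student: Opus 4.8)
The plan is to recognize that $K_{m,n}$ is exactly the complete multipartite graph $K_{n_1,n_2}$ with two parts of sizes $n_1 = m$ and $n_2 = n$, so the formula should fall directly out of Theorem~\ref{thm:cmenum}. To keep the notation straight (the symbol $n$ denotes the \emph{total} number of vertices in Theorem~\ref{thm:cmenum} but a \emph{part size} here), I would write $N = m+n$ for the total, set $k = 2$, and substitute:
$$\tau(K_{m,n}) = N^{\,2-2}\,(N-m)^{m-1}(N-n)^{n-1} = n^{m-1}\,m^{n-1} = m^{n-1}n^{m-1},$$
using $N-m = n$, $N-n = m$, and $N^{0} = 1$. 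The only thing to verify is the bookkeeping of the exponents and the vanishing of the prefactor $N^{k-2}$.

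Since that deduction is a one-line algebraic simplification, the more interesting option is to give a self-contained argument that previews the rank-one perturbation philosophy of the paper. For this I would order the vertices so that the $m$-part precedes the $n$-part, producing the block Laplacian
$$L(K_{m,n}) = \begin{pmatrix} n I_m & -J_{m\times n} \\ -J_{n\times m} & m I_n \end{pmatrix},$$
where $J$ with subscripts denotes the all-ones matrix of the indicated shape. I would then invoke the standard identity $\tau(G) = \tfrac{1}{N^2}\det(L(G) + J_N)$, where $J_N$ is the full $N\times N$ all-ones matrix; this is a rank-one perturbation of the Laplacian, and its validity rests only on the fact that the kernel of $L(G)$ is spanned by the all-ones vector (the precise form of this identity is among the linear algebraic techniques the paper reviews immediately after this point).

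The payoff is that adding $J_N$ cancels the off-diagonal blocks exactly, since $-J_{m\times n} + J_{m\times n} = 0$, leaving the block-diagonal matrix $\mathrm{diag}(nI_m + J_m,\, mI_n + J_n)$. Because $J_p$ has eigenvalues $p$ (once) and $0$ (with multiplicity $p-1$), one computes $\det(nI_m + J_m) = (m+n)\,n^{m-1}$ and $\det(mI_n + J_n) = (m+n)\,m^{n-1}$, so $\det(L + J_N) = (m+n)^2\,m^{n-1}n^{m-1}$, and dividing by $N^2 = (m+n)^2$ gives the result. I expect no genuine obstacle on either route: the substitution into Theorem~\ref{thm:cmenum} is immediate, and in the perturbation argument the only points needing care are confirming the block cancellation and correctly recording the spectrum of $J_p$, with everything else routine.
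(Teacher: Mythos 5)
Both of your routes are correct, and it is worth noting that the paper itself offers no proof of Theorem~\ref{thm:bipartiteenum}: it records the formula as a known result (citing Hartsfield and Werth, and Scoins) and observes only that it is the two-part specialization of Theorem~\ref{thm:cmenum} --- which is exactly your first route, whose bookkeeping ($N^{k-2}=1$, $N-m=n$, $N-n=m$) is right. Your second route is a genuinely different, self-contained argument and is also correct: the identity $\det\left(L+J_N\right)=N^2\,\tau(G)$ is precisely the Temperley special case $\va=\vb=\mathbf{1}$ of Lemma~\ref{lem:klee-stamps}, the off-diagonal blocks do cancel, and the spectra of $nI_m+J_m$ and $mI_n+J_n$ (eigenvalues $m+n$ once and $n$, resp.\ $m$, with multiplicity $m-1$, resp.\ $n-1$) give $\det\left(L+J_N\right)=(m+n)^2m^{n-1}n^{m-1}$ as you claim. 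The instructive contrast with the paper's methodology is this: since $K_{m,n}$ is a Ferrers graph, the paper's preferred perturbation (in its proof of Theorem~\ref{thm:ferrersenum}) takes $\va$ and $\vb$ to be the indicator vectors of the two parts, which makes $L+\va\vb^T$ upper \emph{triangular} with the vertex degrees on the diagonal, so no spectral computation is needed; your choice $\va=\vb=\mathbf{1}$ instead makes $L+J_N$ block \emph{diagonal}, trading the clever choice of vectors for an eigenvalue calculation with $J_p$. Both are legitimate instances of Lemma~\ref{lem:klee-stamps}; yours is more elementary to state, the paper's generalizes to all special $2$-threshold graphs. One small caveat: your justification of the identity $\tau(G)=\tfrac{1}{N^2}\det\left(L+J_N\right)$ via ``the kernel of $L$ is spanned by the all-ones vector'' is looser than necessary (and, as phrased, presumes connectivity); it is cleaner to simply invoke Lemma~\ref{lem:klee-stamps}, which holds for every graph with no hypothesis on the kernel.
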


\begin{example}
By Theorem~\ref{thm:bipartiteenum}, the complete bipartite graph $K_{2,3}$ has $\tau(K_{2,3}) = 2^{3-1}\cdot3^{2-1} = 12$ spanning trees, which we illustrate in Figure~\ref{fig:bipartiteex} of the appendix.
\end{example}

The following formula for the number of spanning trees in a threshold graph in terms of its vertex degrees has various proofs by Chestnut and Fishkind~\cite{Chestnut-Fishkind}, Hammer and Kelmans~\cite{Hammer-Kelmans}, Klee and Stamps~\cite{klee_stamps}, and Merris~\cite{Merris}.

\begin{theorem}\label{thm:thresholdenum}
  Let $G = (V,E)$ be a threshold graph with $V = \{v_1,\ldots,v_n\}$. Then
  $$\tau(G) = \frac{\prod_{v_i \in D}\left(\deg(v_i)+1\right) \prod_{v_j \in I}\deg(v_j)}{n}$$ where $D,I \subseteq V$ are the subsets of dominating and isolated vertices, respectively, in $G$.
\end{theorem}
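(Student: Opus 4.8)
The plan is to turn $L(G)$ into a triangular matrix with a single rank-one update and then read $\tau(G)$ off the diagonal. The engine is the following consequence of Kirchhoff's Matrix-Tree theorem: for a connected graph all cofactors of $L(G)$ coincide with $\tau(G)$, so $\operatorname{adj} L(G) = \tau(G)\,\mathbf{1}\mathbf{1}^{\top}$, and hence, using $\det(M+\mathbf{a}\mathbf{b}^{\top}) = \det M + \mathbf{b}^{\top}(\operatorname{adj} M)\mathbf{a}$ with $M = L(G)$,
$$\det\!\big(L(G)+\mathbf{a}\mathbf{b}^{\top}\big)=\tau(G)\,(\mathbf{1}^{\top}\mathbf{a})(\mathbf{1}^{\top}\mathbf{b})$$
for every pair of vectors $\mathbf{a},\mathbf{b}$. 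It therefore suffices to find $\mathbf{a},\mathbf{b}$ for which $L(G)+\mathbf{a}\mathbf{b}^{\top}$ is triangular, since its determinant is then the product of its diagonal entries.

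I would fix a construction order $v_1,\dots,v_n$ as in Theorem~\ref{thm:threshold-order}, with $v_1$ the initial vertex, and let $D$ and $I$ be the vertices among $v_2,\dots,v_n$ appended as dominating and isolated, respectively. The structural fact driving the argument is that, for $i<j$, the entry $L(G)_{ij}$ equals $-1$ precisely when $v_j$ was appended as dominating (and is adjacent to all earlier vertices) and equals $0$ when $v_j$ was appended as isolated (and has no earlier neighbor); in other words each strictly-upper-triangular entry in column $j$ depends only on the type of $v_j$. Taking $\mathbf{a}=\mathbf{1}$ and $\mathbf{b}=\chi_D$, the indicator vector of $D$, the update $(\mathbf{a}\mathbf{b}^{\top})_{ij}=b_j$ cancels $L(G)_{ij}$ for all $i<j$, so $T:=L(G)+\mathbf{1}\chi_D^{\top}$ is lower triangular.

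Reading off the diagonal gives $T_{ii}=\deg(v_i)+1$ for $v_i\in D$, $T_{ii}=\deg(v_i)$ for $v_i\in I$, and $T_{11}=\deg(v_1)$ (as $b_1=0$). The determinant identity then yields
$$\tau(G)=\frac{\det T}{(\mathbf{1}^{\top}\mathbf{1})(\mathbf{1}^{\top}\chi_D)}=\frac{\deg(v_1)\prod_{v\in D}\big(\deg(v)+1\big)\prod_{v\in I}\deg(v)}{n\,|D|}.$$

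The step I expect to be the crux is this final normalization, where a spurious factor $|D|=\mathbf{1}^{\top}\chi_D$ sits in the denominator. It is resolved by the identity $\deg(v_1)=|D|$: the initial vertex is adjacent to exactly those vertices appended as dominating and to no others, so its degree counts $D$. Cancelling $\deg(v_1)$ against $|D|$ collapses the denominator to $n$ and produces the claimed formula. (The argument presumes $G$ connected, so that $|D|\ge 1$; otherwise $\tau(G)=0$ and some isolated vertex contributes a zero factor.) I would sanity-check against $K_n$, where $v_2,\dots,v_n$ are all dominating and $I=\varnothing$, and against stars, where the single dominating vertex supplies the lone numerator factor, and I would note that swapping the roles of $\mathbf{a}$ and $\mathbf{b}$ gives an upper-triangular version of $T$ — the form I anticipate generalizing to Ferrers and special $2$-threshold graphs.
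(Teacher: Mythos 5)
Your proposal is correct and is essentially the paper's own argument: the paper perturbs $L(G)$ by $\chi_D\mathbf{1}^{\top}$ (the transpose of your $\mathbf{1}\chi_D^{\top}$) to obtain an upper-triangular matrix, reads off the same diagonal entries, and closes with the identical cancellation $\deg(v_1)=|D|=\mathbf{1}^{\top}\chi_D$ via Lemma~\ref{lem:klee-stamps}. The only cosmetic differences are that you work with the lower-triangular transpose and re-derive that lemma from the Matrix-Tree theorem and the matrix determinant identity rather than citing it.
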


\begin{example} By Theorem~\ref{thm:thresholdenum}, the threshold graph $G$ in Figure~\ref{fig:graphex}(d) has 
$$\tau(G) = \frac{\deg(v_2) \cdot \left(\deg(v_3)+1\right) \cdot \deg(v_4) \cdot \left(\deg(v_5) + 1\right)}{5} = \frac{2 \cdot (3+1) \cdot 1 \cdot (4+1)}{5} = 8$$ spanning trees, which we illustrate in Figure~\ref{fig:thresholdex} of the appendix.
\end{example}

Ferrers graphs have a similarly beautiful formula, first shown by Ehrenborg and van Willigenburg~\cite{E-VW} with a different proof by Klee and Stamps~\cite{klee_stamps}.

\begin{theorem}\label{thm:ferrersenum}
  Let $G = (V,E)$ be a Ferrers graph whose vertex set is partitioned as $V = R\sqcup C$, with $R = \{r_1,\dots,r_m\}$ and $C = \{c_1,\dots,c_n\}.$ Then
  $$\tau(G) = \frac{\prod_{i = 1}^m \deg(r_i) \prod_{j = 1}^n \deg(c_j)}{m\cdot n}.
  %=\prod_{i=2}^{m}\deg(r_i)\prod_{j=2}^{n}\deg(c_j).
  $$
\end{theorem}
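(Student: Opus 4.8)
The plan is to apply the rank-one perturbation form of Kirchhoff's Matrix-Tree theorem used by Klee and Stamps~\cite{klee_stamps}: if $L = L(G)$ is the Laplacian of a connected graph on $N$ vertices and $\va,\vb\in\mathbb{R}^N$ satisfy $\mathbf{1}^{\mathsf{T}}\va\neq 0$ and $\mathbf{1}^{\mathsf{T}}\vb\neq 0$, where $\mathbf{1}$ denotes the all-ones vector, then
$$\tau(G)=\frac{\det\!\left(L+\va\vb^{\mathsf{T}}\right)}{(\mathbf{1}^{\mathsf{T}}\va)(\mathbf{1}^{\mathsf{T}}\vb)}.$$
Accordingly, it suffices to exhibit an ordering of $V$ and vectors $\va,\vb$ for which $L+\va\vb^{\mathsf{T}}$ is triangular, so that its determinant is the product of its diagonal entries, and then to check that this product together with the scalars $\mathbf{1}^{\mathsf{T}}\va$ and $\mathbf{1}^{\mathsf{T}}\vb$ produces the claimed formula.

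First I would order the vertices of $G$ according to its traversal $c_1, R_1, c_2, R_2, \dots, c_n, R_n$ and take $\va$ to be the characteristic vector of $R$ (a $1$ in each row coordinate and $0$ in each column coordinate) and $\vb$ the characteristic vector of $C$. Then $\mathbf{1}^{\mathsf{T}}\va=|R|=m$ and $\mathbf{1}^{\mathsf{T}}\vb=|C|=n$, while $\va\vb^{\mathsf{T}}$ has a $1$ in each entry indexed by a row--column pair $(r_i,c_j)$ and a $0$ everywhere else. In particular the perturbation vanishes on the diagonal, on every within-part entry, and on every entry of the form $(c_j,r_i)$, so the diagonal of $L+\va\vb^{\mathsf{T}}$ still records the vertex degrees and the only entries that change are the row--column entries $(r_i,c_j)$, each of which has $1$ added to it.

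The crux of the argument, and the step I expect to require the most care, is verifying that $L+\va\vb^{\mathsf{T}}$ is upper triangular in this ordering. The key structural fact is that, by the Ferrers property~(2), the neighborhood of a row $r_i$ is exactly a prefix $\{c_1,\dots,c_k\}$ of the columns, where $k=\max\{j: c_j\in N(r_i)\}$, and the traversal places $r_i$ (inside the block $R_k$) immediately after $c_k$. Consequently every neighbor of $r_i$ precedes $r_i$ in the ordering and every non-neighbor column follows it. I would use this to show that the strictly lower-triangular part is identically zero: for an adjacent pair the entry $(r_i,c_j)$ lies below the diagonal and the added $1$ cancels the $-1$ of the Laplacian, whereas for a non-adjacent pair the below-diagonal entry is $(c_j,r_i)$, which the perturbation leaves equal to $0$; the within-part entries are zero because $G$ is bipartite. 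The main obstacle is thus purely combinatorial: pinning down the position of each vertex in the traversal relative to its neighbors, which rests on property~(2) and the definition of the blocks $R_k$.

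Once triangularity is established, the determinant of $L+\va\vb^{\mathsf{T}}$ is the product of its diagonal entries, namely $\prod_{i=1}^m\deg(r_i)\prod_{j=1}^n\deg(c_j)$, since the perturbation does not alter the diagonal. Dividing by $(\mathbf{1}^{\mathsf{T}}\va)(\mathbf{1}^{\mathsf{T}}\vb)=m\cdot n$ then yields the stated formula. I would close by noting that this exhibits the promised triangular rank-one perturbation of $L(G)$, and that the ordering within each block $R_k$ is immaterial here because the rows of a common block are non-adjacent twins with identical neighborhoods.
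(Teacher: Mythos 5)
Your proposal is correct and follows essentially the same route as the paper's proof: order the vertices by the Ferrers traversal, take $\va$ and $\vb$ to be the indicator vectors of $R$ and $C$, observe that $L(G)+\va\vb^{T}$ is upper triangular with the vertex degrees on its diagonal, and divide by $m\cdot n$ via the Klee--Stamps lemma. The only difference is that you spell out the triangularity verification (prefix neighborhoods and the placement of each $r_i$ after its last neighbor $c_k$) in more detail than the paper, which states it as a one-line chain of biconditionals.
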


\begin{example}
By Theorem~\ref{thm:ferrersenum}, the Ferrers graph $G$ in Figure~\ref{fig:graphex}(e) has
$$\tau(G) = \frac{\prod_{i=1}^4 \deg(r_1) \cdot \prod_{j=1}^3 \deg(c_j)}{4\cdot 3} = \frac{3\cdot2\cdot2\cdot1\cdot4\cdot3\cdot1}{12} = 12$$
spanning trees, which we illustrate in Figure~\ref{fig:ferrersex} of the appendix.
\end{example}

\subsection{Linear Algebraic Techniques}  It can be very convenient to encode the information of a graph in a matrix.  The \textbf{Laplacian matrix} of a graph $G = (V,E)$ with $V = \{v_1,\ldots,v_n\}$ is the $n \times n$ matrix $L = L(G)$ with entries given by
\[L(i,j) = \begin{cases}
    \deg(v_i) \hspace{0.5cm} &\text{if }i = j,\\
    -1 \hspace{0.5cm} &\text{if } \{v_i,v_j\} \in E, \\
    0 \hspace{0.5cm} &\text{otherwise}.
\end{cases}\]

The powerful and elegant Matrix-Tree Theorem of Kirchhoff~\cite{kirchhoff} asserts that the number of spanning trees in a graph is given by any cofactor of its Laplacian matrix.

\begin{theorem}[Matrix-Tree Theorem]\label{kirchhoff}
Let $G$ be a graph on $n$ vertices with Laplacian matrix~$L$. Then 
$$\tau(G)=(-1)^{i+j} \det(L_{i,j})$$ for any $1 \leq i,j \leq n$ where $L_{i,j}$ denotes the $(n-1) \times (n-1)$ matrix obtained by deleting the $i$th row and $j$th column from $L$.
\end{theorem}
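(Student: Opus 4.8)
The plan is to prove the theorem in the classical way, via the oriented incidence matrix together with the Cauchy--Binet formula, after reducing the general cofactor to a diagonal one. First I would fix an arbitrary orientation of the edges of $G$ and form the $n \times |E|$ \emph{incidence matrix} $B$, whose column indexed by an edge $e = \{u,w\}$ oriented from $u$ to $w$ has a $+1$ in row $u$, a $-1$ in row $w$, and zeros elsewhere. A direct computation of $BB^T$ shows that its $(i,i)$ entry equals $\deg(v_i)$ and its $(i,j)$ entry equals $-1$ or $0$ according to whether $\{v_i,v_j\}\in E$; that is, $L = BB^T$, independently of the chosen orientation. (If $G$ is disconnected, then $\tau(G) = 0$ and $\operatorname{rank}(L) \le n-2$, so every $(n-1)\times(n-1)$ minor of $L$ vanishes and the formula holds trivially; I may therefore assume $G$ is connected.)

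Next I would show that the signed quantity $(-1)^{i+j}\det(L_{i,j})$ does not depend on $i$ or $j$, so that it suffices to evaluate it on the diagonal. Because every row and every column of $L$ sums to zero, we have $L\mathbf{1} = 0 = \mathbf{1}^T L$ and $\det L = 0$. The identity $L\cdot \operatorname{adj}(L) = \det(L)\, I = 0$ then forces every column of the adjugate to lie in $\ker L$, while $\operatorname{adj}(L)\cdot L = 0$ forces every row to lie in the left kernel; since $G$ is connected, both kernels equal $\operatorname{span}(\mathbf{1})$, and a short argument shows that all entries of $\operatorname{adj}(L)$ then coincide. As the $(j,i)$ entry of $\operatorname{adj}(L)$ is exactly the cofactor $(-1)^{i+j}\det(L_{i,j})$, all of these cofactors are equal, and I may take $i = j$.

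For the diagonal case, deleting row and column $i$ gives $L_{i,i} = B_i B_i^T$, where $B_i$ is $B$ with its $i$th row removed. The Cauchy--Binet formula then yields
$$\det(L_{i,i}) = \sum_{S} \det\big((B_i)_S\big)^2,$$
the sum ranging over all $(n-1)$-element sets $S$ of edges, where $(B_i)_S$ is the corresponding $(n-1)\times(n-1)$ submatrix. The crux is the combinatorial lemma that $\det\big((B_i)_S\big) = \pm 1$ when the edges of $S$ form a spanning tree of $G$ and $\det\big((B_i)_S\big) = 0$ otherwise. Granting this, each spanning tree contributes $(\pm 1)^2 = 1$ and every other term vanishes, so $\det(L_{i,i}) = \tau(G)$, completing the proof.

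I expect this combinatorial lemma to be the main obstacle, since it is where the graph structure enters the linear algebra. For the vanishing half, I would argue that if $S$ contains a cycle, then walking around that cycle and recording $\pm 1$ for each edge according to its orientation exhibits an explicit linear dependence among the columns of $(B_i)_S$. For the nonvanishing half, an acyclic edge set of size $n-1$ on $n$ vertices is necessarily a spanning tree, and I would evaluate the determinant by induction on leaves: a tree on at least two vertices has at least two leaves, hence a leaf $\ell \neq v_i$ whose row in $(B_i)_S$ has a single nonzero entry $\pm 1$; cofactor expansion along that row peels off $\ell$ and its incident edge, leaving the reduced incidence matrix of the smaller spanning tree obtained by deleting $\ell$, with $v_i$ still excluded. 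The base case is the empty tree on the single vertex $v_i$, whose $0\times 0$ reduced incidence matrix has determinant $1$, so the induction yields $\pm 1$ throughout.
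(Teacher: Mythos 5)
Your proposal is correct, but there is nothing in the paper to compare it against: the paper states the Matrix-Tree Theorem as a classical result, citing Kirchhoff, and gives no proof of it (the theorem is used only as background, alongside Lemma~\ref{lem:klee-stamps}, to motivate the rank-one perturbation technique). What you have written is the standard textbook proof, and all of its pieces are sound: the factorization $L = BB^T$; the reduction of the disconnected case via $\operatorname{rank}(L) \le n-2$; the adjugate argument showing all cofactors coincide (columns of $\operatorname{adj}(L)$ lie in $\ker L = \operatorname{span}(\mathbf{1})$, rows in the left kernel, forcing all entries equal, with the $(j,i)$ entry of $\operatorname{adj}(L)$ being the $(i,j)$ cofactor); Cauchy--Binet applied to $L_{i,i} = B_iB_i^T$; and the combinatorial lemma that $\det((B_i)_S) = \pm 1$ exactly when $S$ is a spanning tree, proved by the cycle-dependence argument in one direction and leaf-peeling induction in the other. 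The only step stated without justification is that $\ker L = \operatorname{span}(\mathbf{1})$ for connected $G$; this follows in one line from $x^T L x = \lVert B^T x\rVert^2$, which forces $x$ to be constant across every edge, hence constant on each connected component, and you should include that line for completeness. Note also that your argument delivers slightly more than you claim at the end: once the adjugate step shows all cofactors are equal and the diagonal computation gives $\det(L_{i,i}) = \tau(G)$, the full statement $(-1)^{i+j}\det(L_{i,j}) = \tau(G)$ for arbitrary $i,j$ follows immediately, so the proof is complete as structured.
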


\begin{example}
The Laplacian matrix of the graph $G$ in Figure~\ref{fig:graphex}(a) is
$$\begin{bmatrix}
2 & -1 & 0 & -1 & 0 & 0 \\
-1 & 4 & -1 & 0 & -1 & -1 \\
0 & -1 & 1 & 0 & 0 & 0 \\
-1 & 0 & 0 & 2 & -1 & 0 \\
0 & -1 & 0 & -1 & 3 & -1 \\
0 & -1 & 0 & 0 & -1 & 2
\end{bmatrix}$$
and the Matrix-Tree Theorem, applied with $i = j = 1$, asserts that
$$ \tau(G) = (-1)^{1+1}\begin{vmatrix}
4 & -1 & 0 & -1 & -1 \\
-1 & 1 & 0 & 0 & 0 \\
0 & 0 & 2 & -1 & 0 \\
-1 & 0 & -1 & 3 & -1 \\
-1 & 0 & 0 & -1 & 2
\end{vmatrix}= 11,$$
as illustrated in Figure~\ref{fig:spanningex}.
\end{example}

While the Matrix-Tree Theorem provides a systematic approach for counting the number spanning trees in a graph $G$, the determinant calculation can be algebraically and/or computationally cumbersome.  The following result due to Klee and Stamps~\cite{klee_stamps} takes an alternate approach to computing $\tau(G)$ by considering the determinants of rank-one perturbations rather than submatrices of $L(G)$. 

\begin{lemma}\label{lem:klee-stamps}
  Let $G$ be a graph on $n$ vertices with Laplacian matrix $L$, and let $\va=\langle a_i \rangle_{i \in [n]}$ and $\vb=\langle b_i \rangle_{i \in [n]}$ be column vectors in $\mathbb{R}^n.$ Then
  $$\det\left(L + \va\vb^T\right)=\left(\sum_{i=1}^n a_i\right) \cdot \left(\sum_{i = 1}^n b_i\right) \cdot \tau (G).$$
\end{lemma}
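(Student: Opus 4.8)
The plan is to leverage two structural features of the Laplacian $L$ together with a single rank-one determinant identity. First I would record that, by definition of $L$, every row and every column sums to zero, so $L\mathbf{1} = \mathbf{0}$ and $\mathbf{1}^T L = \mathbf{0}^T$, where $\mathbf{1}$ is the all-ones vector. In particular $L$ is singular, so $\det(L) = 0$, and the kernel of $L$ contains $\mathbf{1}$.

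The engine of the proof is the identity $\det(A + uv^T) = \det(A) + v^T\operatorname{adj}(A)\,u$, valid for every $n\times n$ matrix $A$ and all column vectors $u,v\in\mathbb{R}^n$, where $\operatorname{adj}(A)$ denotes the adjugate (the transpose of the cofactor matrix). I would justify this by first noting it for invertible $A$ via the matrix determinant lemma $\det(A+uv^T) = \det(A)(1 + v^T A^{-1} u)$ together with $A^{-1} = \operatorname{adj}(A)/\det(A)$, and then observing that both sides are polynomials in the entries of $A$, $u$, and $v$, so the identity persists on the (dense) singular locus by continuity. A reader preferring to avoid invertibility altogether can instead expand $\det(L + \va\vb^T)$ directly by multilinearity in the columns: since each column of $\va\vb^T$ is a scalar multiple of $\va$, every term involving two or more copies of $\va$ vanishes, leaving $\det(L) + \sum_{j} b_j \det(L^{(j)})$, where $L^{(j)}$ is $L$ with its $j$th column replaced by $\va$.

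The remaining ingredient is to identify $\operatorname{adj}(L)$. By the Matrix-Tree Theorem (Theorem~\ref{kirchhoff}), every signed cofactor $(-1)^{i+j}\det(L_{i,j})$ equals $\tau(G)$; since $\operatorname{adj}(L)$ is exactly the matrix of these signed cofactors (transposed), every entry of $\operatorname{adj}(L)$ equals $\tau(G)$, i.e. $\operatorname{adj}(L) = \tau(G)\,\mathbf{1}\mathbf{1}^T$. Substituting $A = L$, $u = \va$, $v = \vb$ into the identity and using $\det(L)=0$ then gives $\det(L + \va\vb^T) = \vb^T\bigl(\tau(G)\,\mathbf{1}\mathbf{1}^T\bigr)\va = \tau(G)\,(\vb^T\mathbf{1})(\mathbf{1}^T\va)$, and since $\vb^T\mathbf{1} = \sum_i b_i$ and $\mathbf{1}^T\va = \sum_i a_i$, the claimed formula follows.

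I expect the main obstacle to be the clean justification of the rank-one determinant identity in the singular case: the matrix determinant lemma is usually stated for invertible $A$, so one must either pass to the limit through a polynomial-identity argument or carry out the column-multilinearity expansion carefully, keeping track of the cofactor signs so that the appeal to the Matrix-Tree Theorem lands correctly. Everything else --- the row and column sum conditions and the evaluation of the dot products with $\mathbf{1}$ --- is routine.
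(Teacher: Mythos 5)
Your proof is correct. Note that the paper itself does not prove this lemma: it is quoted from Klee and Stamps with a citation, so there is no in-paper argument to compare against. Your derivation is the natural one and is essentially self-contained given the Matrix-Tree Theorem (Theorem~\ref{kirchhoff}): the identity $\det(A+uv^T)=\det(A)+v^T\operatorname{adj}(A)\,u$, the vanishing $\det(L)=0$, and the observation that all signed cofactors of $L$ equal $\tau(G)$, so $\operatorname{adj}(L)=\tau(G)\,\mathbf{1}\mathbf{1}^T$, combine to give exactly the claimed formula. Your fallback via column multilinearity also closes correctly, since $\det(L^{(j)})$ expands along the replaced column as $\sum_i a_i(-1)^{i+j}\det(L_{i,j})=\tau(G)\sum_i a_i$. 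One small wording slip: you say the identity ``persists on the (dense) singular locus by continuity''; density belongs to the \emph{invertible} matrices, and the identity extends to the singular ones because both sides are polynomials agreeing on a dense set. That is a phrasing issue, not a gap.
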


In the special case that $\va$ and $\vb$ are both equal to the all-ones vector in $\mathbb{R}^n$ (making $\va\vb^T$ the $n \times n$ all-ones matrix), this result is credited to Temperley~\cite{Temperley}. The advantage of Lemma~\ref{lem:klee-stamps} is that a clever choice of vectors $\va$ and $\vb$ can yield a rank-one perturbation $L+\va\vb^T$ whose determinant is very easy to calculate.  

\begin{example}\label{ex:threshold}
Consider the threshold graph $G$ in Figure~\ref{fig:threshold}.  If $\va = \langle 0,0,1,0,1 \rangle$ and $\vb = \langle 1,1,1,1,1 \rangle$, then the rank-one perturbation $L(G) + \va\vb^T$ calculated below is upper triangular with diagonal entries $2$, $2$, $4$, $1$, and $5$:
$$
\begin{bmatrix}
2 & 0 & -1 & 0 & -1\\
0 & 2 & -1 & 0 & -1\\
-1 & -1 & 3 & 0 & -1\\
0 & 0 & 0 & 1 & -1\\
-1 & -1 & -1 & -1 & 4\\
\end{bmatrix} + \begin{bmatrix}
0 & 0 & 0 & 0 & 0\\
0 & 0 & 0 & 0 & 0\\
1 & 1 & 1 & 1 & 1\\
0 & 0 & 0 & 0 & 0\\
1 & 1 & 1 & 1 & 1\\
\end{bmatrix}
= \begin{bmatrix}
2 & 0 & -1 & 0 & -1\\
0 & 2 & -1 & 0 & -1\\
0 & 0 & 4 & 1 & 0\\
0 & 0 & 0 & 1 & -1\\
0 & 0 & 0 & 0 & 5\\
\end{bmatrix}
$$
Since $\det(L(G)+\va\vb^T) = 2 \cdot 2 \cdot 4 \cdot 1 \cdot 5 = 80$, Lemma~\ref{lem:klee-stamps} asserts that $\tau(G) = 80 / (2\cdot 5) = 8$, as illustrated in Figure~\ref{fig:thresholdex}.
\end{example}

\begin{example}\label{ex:Ferrers}
Consider the Ferrers graph $G$ in Figure~\ref{fig:ferrers} with its vertices ordered according to its Ferrers traversal: $v_1 = c_1$, $v_2 = r_4$, $v_3 = c_2$, $v_4 = r_3$, $v_5 = r_2$, $v_6 = c_3$, and $v_7 = r_1$.  If $\va = \langle 0,1,0,1,1,0,1 \rangle$ and $\vb = \langle 1,0,1,0,0,1,0 \rangle$, then the rank-one perturbation $L(G) + \va\vb^T$ calculated below is upper triangular with diagonal entries $4$, $1$, $3$, $2$, $2$, $1$, $3$:
$$\begin{small}
\begin{bmatrix}
4 & -1 & 0 & -1 & -1 & 0 & -1\\
-1 & 1 & 0 & 0 & 0 & 0 & 0\\
0 & 0 & 3 & -1 & -1 & 0 & -1\\
-1 & 0 & -1 & 2 & 0 & 0 & 0\\
-1 & 0 & -1 & 0 & 2 & 0 & 0\\
0 & 0 & 0 & 0 & 0 & 1 & -1\\
-1 & 0 & -1 & 0 & 0 & -1 & 3
\end{bmatrix} + \begin{bmatrix}
0 & 0 & 0 & 0 & 0 & 0 & 0\\
1 & 0 & 1 & 0 & 0 & 1 & 0\\
0 & 0 & 0 & 0 & 0 & 0 & 0\\
1 & 0 & 1 & 0 & 0 & 1 & 0\\
1 & 0 & 1 & 0 & 0 & 1 & 0\\
0 & 0 & 0 & 0 & 0 & 0 & 0\\
1 & 0 & 1 & 0 & 0 & 1 & 0\\
\end{bmatrix}
= \begin{bmatrix}
4 & -1 & 0 & -1 & -1 & 0 & -1\\
0 & 1 & 1 & 0 & 0 & 1 & 0\\
0 & 0 & 3 & -1 & -1 & 0 & -1\\
0 & 0 & 0 & 2 & 0 & 1 & 0\\
0 & 0 & 0 & 0 & 2 & 1 & 0\\
0 & 0 & 0 & 0 & 0 & 1 & -1\\
0 & 0 & 0 & 0 & 0 & 0 & 3
\end{bmatrix} \end{small}
$$
Since $\det(L(G)+\va\vb^T) = 4 \cdot 1 \cdot 3 \cdot 2 \cdot 2 \cdot 1 \cdot 3 = 144$, Lemma~\ref{lem:klee-stamps} asserts that $\tau(G) = 144 / (4\cdot 3) = 12$, as illustrated in Figure~\ref{fig:ferrersex}.
\end{example}

The ideas in Examples~\ref{ex:threshold} and \ref{ex:Ferrers} can be extended to give new straightforward proofs of Theorems~\ref{thm:thresholdenum} and \ref{thm:ferrersenum}.

\begin{proof}[Proof of Theorem~\ref{thm:thresholdenum}]
  Let $v_1,\dots,v_n$ be the ordering of vertices in $V$ given by the construction order of $G$.  We claim that the rank-one perturbation $L(G)+\va\vb^T$, where $\va \in \mathbb{R}^n$ is the indicator vector of the dominating vertices in $G$, that is, 
  \[
    a_{i}=\begin{cases}
    1 & \text{if } v_i \in D,\\
    0 & \text{otherwise},
    \end{cases}
  \]
  and $\vb \in \mathbb{R}^n$ is the all-ones vector, is upper triangular. To see why, observe that for $1 \leq j < i \leq n$, $(\va\vb^T)(i,j)=1$ if and only if $v_i$ is a dominating vertex if and only if $\{v_i,v_j\} \in E$ if and only if $L(i,j)=-1$.
  Since $L(G) + \va\vb^T$ is upper triangular, its determinant is simply the product of its diagonal entries, which are
    \[
    \left(L+\va\vb^{T}\right)\left(i,i\right)=
    \begin{cases}
    \deg\left(v_{i}\right)+1 & \text{if } v_i \in D,\\
    \deg\left(v_{i}\right) & \text{otherwise.}
    \end{cases}
  \]
  Thus,
  \[
  \det\left(L+\va\vb^{T}\right) = \deg(v_1)\cdot \prod_{i \in D}\left(\deg(v_i)+1\right) \cdot \prod_{i \in I}\deg(v_i).
  \]
  Since $N(v_1) = D$, $\deg(v_1) = |D| = \sum\limits_{i = 1}^n a_i$, and the desired expression for $\tau(G)$ follows directly from Lemma~\ref{lem:klee-stamps}.
  \end{proof}

\begin{proof}[Proof of Theorem~\ref{thm:ferrersenum}]
Let $v_1,\dots,v_{m+n}$ be the ordering of the vertices in $V$ given by the Ferrers traversal of $G$. We claim that the rank-one perturbation $L(G)+\va\vb^T$, where $\va, \vb \in \mathbb{R}^{m+n}$ are the indicator vectors of $R$ and $C$, respectively, that is,
$$a_i = \begin{cases}1 & \text{if }v_i\in R,\\ 0 &\text{else},\end{cases}\qquad b_i = \begin{cases}1 & \text{if }v_i\in C,\\ 0 &\text{else},\end{cases}$$ is upper triangular.  To see why, observe that for $1 \leq j < i \leq m+n$, $(\va\vb^T)(i,j)=1$ if and only if $v_i \in R$ and $v_j \in C$ if and only if $\{v_i,v_j\} \in E$ if and only if $L(i,j)=-1$.
  Since $L(G) + \va\vb^T$ is upper triangular, its determinant is simply the product of its diagonal entries, which are precisely the diagonal entries of $L(G)$.  (The diagonal entries of $\va\vb^T$ are all zero since no vertex belongs to both $R$ and $C$.) Thus,
$$  \det\left(L(G) + \va\vb^T\right) = \prod_{k = 1}^{m+n}\deg(v_k) = \prod_{i=1}^m \deg(r_i) \cdot \prod_{j = 1}^n \deg(c_j).$$
Since the entries of $\va$ and $\vb$ sum to $|R| = m$ and $|C| = n$, respectively, the desired expression for $\tau(G)$ follows directly from Lemma~\ref{lem:klee-stamps}.
\end{proof}

These new proofs of Theorems~\ref{thm:thresholdenum} and \ref{thm:ferrersenum} share a common key ingredient -- the vertices of any threshold or Ferrers graph can be ordered so its Laplacian matrix admits a triangular rank-one perturbation.  In the next section, we show that this approach can be extended to special $2$-threshold graphs.

\section{Main Results}\label{sec:nuclear}

This is the main section of our article where we prove that a graph Laplacian admits a triangular rank-one perturbation if and only if the graph is special $2$-threshold.  

\begin{theorem}\label{thm:quasiupdate}
  The Laplacian matrix of a graph $G$ admits a triangular rank-one perturbation if and only if $G$ is special $2$-threshold.
\end{theorem}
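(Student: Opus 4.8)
The plan is to prove the two directions of the biconditional separately, with the key observation that a triangular rank-one perturbation $L(G)+\va\vb^T$ forces strong structural constraints on the off-diagonal entries, which we can translate back and forth into adjacency conditions. Throughout, I would work with a fixed vertex ordering $v_1,\dots,v_n$ and write the perturbation as $M = L + \va\vb^T$ with $\va = \langle a_i\rangle$, $\vb = \langle b_i\rangle$. The central bookkeeping fact, already implicit in the proofs of Theorems~\ref{thm:thresholdenum} and \ref{thm:ferrersenum}, is that for $i \neq j$ we have $M(i,j) = -[\{v_i,v_j\}\in E] + a_i b_j$, so requiring $M$ to be (say) upper triangular means $M(i,j)=0$ for all $j<i$, i.e. $a_i b_j = [\{v_i,v_j\}\in E]$ whenever $j < i$. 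Since adjacency is symmetric but this product condition is not, the interesting content lives in how the lower-triangular constraints cut the graph into pieces governed by the supports of $\va$ and $\vb$.

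For the forward direction (perturbation $\Rightarrow$ special $2$-threshold), I would start from an ordering and vectors $\va,\vb$ making $M$ upper triangular, and set $U = \{v_i : b_i \neq 0\}$, aiming to show $G$ is $U$-threshold, which by Remark~\ref{rem:U-threshold} gives special $2$-threshold. The constraint $a_i b_j \in \{0,1\}$ for $j<i$ already forces each nonzero $a_i,b_j$ to be determined (one can normalize so the relevant entries are $0$ or $1$). The key structural claim is that the \emph{last} vertex $v_n$ in the ordering is the witness vertex for $W = V$: its lower-adjacency is governed entirely by $a_n b_j$, so either $a_n = 0$ (forcing $v_n$ to have no lower neighbors, hence isolated in $G[V]$ since it is the maximum) or $a_n \neq 0$, in which case $v_n$ is adjacent below exactly to those $v_j$ with $b_j \neq 0$, i.e. $N(v_n) = U \setminus \{v_n\}$ intersected appropriately. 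The crucial point is that any induced subgraph $G[W]$ inherits a triangular rank-one perturbation by restricting $\va,\vb$ and the ordering to $W$, so the same argument applied to the top vertex of $W$ produces the required witness $v$ with $N_W(v)=\emptyset$ or $N_W(v) = (W\setminus\{v\})\cap U$. This inductive-restriction step is clean and I expect it to go through with only careful handling of the $a_n=0$ versus $a_n\neq 0$ dichotomy.

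For the reverse direction (special $2$-threshold $\Rightarrow$ perturbation), which I expect to be the harder half, I would use the $U$-threshold formulation and build the ordering and vectors greedily by repeatedly extracting a witness vertex. Given $G$ is $U$-threshold, $G[V]$ has a witness $v$ that is either isolated or satisfies $N(v) = (V\setminus\{v\})\cap U$; I place this witness \emph{last} in the ordering, record whether it was the isolated type or the dominating-into-$U$ type, and recurse on $G[V\setminus\{v\}]$, which is again $U$-threshold. This produces an ordering $v_1,\dots,v_n$ together with a label for each vertex. I would then define $a_i = 1$ precisely when $v_i$ is of the dominating-into-$U$ type (and $a_i=0$ when isolated-type), and $b_j = 1$ precisely when $v_j \in U$. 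The verification that $M = L + \va\vb^T$ is upper triangular amounts to checking $a_i b_j = [\{v_i,v_j\}\in E]$ for all $j<i$: when $v_i$ is dominating-type, its lower neighbors are exactly the lower vertices in $U$, matching $a_i b_j = b_j$; when $v_i$ is isolated-type, it has no lower neighbors, matching $a_i = 0$. The main obstacle here, and the step I would scrutinize most carefully, is confirming that the witness property survives vertex deletion with the \emph{same} set $U$ (so that the labels are globally consistent and $\vb$ is a single fixed vector), and that the ``dominating into $U$'' neighborhoods are exactly the lower neighborhoods under this particular deletion order — in particular ruling out spurious edges among same-type vertices or edges going the ``wrong way'' relative to $U$. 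Once that consistency is pinned down, diagonal entries are automatic and the triangularity follows, completing the equivalence.
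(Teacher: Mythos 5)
Your proposal is correct, but it takes a genuinely different route from the paper. The paper proves both directions by passing through Theorem~\ref{thm:order}: the triangular perturbation is converted into (or built from) a construction order, using the indicator vectors of the $U$-dominating set $D$ and of $U$, and all the substantive work is outsourced to Theorem~\ref{thm:order}, whose existence direction is established with the machinery of Lemma~\ref{lem:nesting} and the total order $\trianglelefteq$ on $V/\sim$. You instead argue directly from the definition of $U$-threshold. In the forward direction you exploit the fact that the off-diagonal entries of $L(G[W])$ agree with those of $L(G)$, so the restricted vectors give a triangular perturbation of every induced subgraph, and the highest-indexed vertex of $W$ is the required witness; note that your ``normalization'' remark is not even needed, since if $a_i \neq 0$ then triangularity forces $b_j \in \{0, 1/a_i\}$ for all $j<i$, so the lower neighbors of $v_i$ are exactly the lower-indexed vertices in the support of $\vb$, i.e.\ in $U$. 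In the reverse direction your greedy peeling --- extract a witness for $W=V$, place it last, recurse --- is legitimate because $U$-thresholdness is hereditary \emph{by definition} (it quantifies over all $W \subseteq V$, and $(W\setminus\{u\})\cap U$ is unchanged when vertices outside $W$ are deleted), which settles the consistency worry you flagged; the vectors you then write down coincide with the paper's indicator vectors, and the triangularity check is the same computation. The trade-off is this: your argument is shorter and self-contained, and your peeling step in fact yields a far more elementary proof of the existence half of Theorem~\ref{thm:order} than the one in the paper; the paper's heavier route, on the other hand, proves a stronger standalone statement (via $\trianglelefteq$ it essentially describes \emph{all} construction orders, not just one), which is advertised as a main contribution in its own right and is reused for the weighted results and the Ferrers corollary.
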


Our proof of Theorem~\ref{thm:quasiupdate} relies on the following characterization of special $2$-threshold graphs that extends Theorem~\ref{thm:threshold-order}.  

\begin{theorem}\label{thm:order}
A graph $G = (V,E)$ with $|V| = n$ is special $2$-threshold if and only if there exists an ordering $v_1,\dots,v_n$ of $V$ and a subset $U \subseteq V$ such that
\[    N^<(v_i) = \emptyset \quad \text{ or } \quad 
    N^<(v_i) = U \cap \{v_1, \ldots, v_{i-1}\}
\] for every $i \in [n]$.
\end{theorem}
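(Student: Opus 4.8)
The plan is to reduce the statement to a claim about $U$-threshold graphs and then prove each direction by an extremal-vertex argument. By Remark~\ref{rem:U-threshold}, $G$ is special $2$-threshold if and only if it is $U$-threshold for some $U \subseteq V$, so it suffices to show, for a fixed $U$, that $G$ is $U$-threshold if and only if its vertices admit an ordering $v_1, \dots, v_n$ with $N^<(v_i) = \emptyset$ or $N^<(v_i) = U \cap \{v_1, \dots, v_{i-1}\}$ for every $i$. This mirrors the situation in Theorem~\ref{thm:threshold-order}, which is recovered by taking $U = V$; accordingly, I would adapt the two standard maneuvers used for threshold graphs, namely building the ordering by peeling vertices one at a time and verifying the defining property via a maximal-index witness.

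For the reverse direction, suppose such an ordering and subset $U$ are given, and let $W \subseteq V$ be arbitrary. I would take $v_k$ to be the highest-indexed vertex of $W$, so that every other vertex of $W$ precedes it, i.e., $W \setminus \{v_k\} \subseteq \{v_1, \dots, v_{k-1}\}$. The bookkeeping then rests on the identity $N_W(v_k) = N^<(v_k) \cap W$, which holds because intersecting $N(v_k)$ with $W \setminus \{v_k\}$ is the same as intersecting it with $\{v_1, \dots, v_{k-1}\}$ and then with $W$. Substituting the two admissible values of $N^<(v_k)$ gives $N_W(v_k) = \emptyset$ in the first case and $N_W(v_k) = U \cap (W \setminus \{v_k\}) = (W \setminus \{v_k\}) \cap U$ in the second, so $v_k$ is exactly the witness required by the definition of $U$-threshold.

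For the forward direction, I would construct the ordering by greedy peeling from the top. Setting $W_n = V$, and having produced $W_i$, I would apply the defining property of $U$-threshold graphs to the subset $W = W_i$ to obtain a vertex $v_i \in W_i$ with $N_{W_i}(v_i) = \emptyset$ or $N_{W_i}(v_i) = (W_i \setminus \{v_i\}) \cap U$, then set $W_{i-1} = W_i \setminus \{v_i\}$ and iterate down to $W_0 = \emptyset$. Because $W_{i-1} = \{v_1, \dots, v_{i-1}\}$ by construction, one has $N_{W_i}(v_i) = N(v_i) \cap \{v_1, \dots, v_{i-1}\} = N^<(v_i)$ and $(W_i \setminus \{v_i\}) \cap U = U \cap \{v_1, \dots, v_{i-1}\}$, so the peeled vertex satisfies precisely the required alternative. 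The point making this recursion legitimate is that the definition of $U$-threshold quantifies over every subset $W \subseteq V$, not merely over $V$ itself, which supplies the needed heredity for free; no separate argument is required to guarantee that a suitable vertex exists at each stage.

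I expect the only real subtlety to be the set-theoretic bookkeeping at the heart of both directions, namely the reductions $N_W(v_k) = N^<(v_k) \cap W$ and $N_{W_i}(v_i) = N^<(v_i)$, each of which relies on the chosen vertex being of maximal index within the relevant subset. Once this extremal choice is fixed, both implications are essentially mechanical and there is no genuine combinatorial obstacle; the content lies in recognizing that the highest-indexed vertex of any induced subgraph serves as the universal witness and, dually, that peeling that vertex builds the ordering with the same subset $U$ throughout.
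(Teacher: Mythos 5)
Your proof is correct, and the reverse direction coincides exactly with the paper's argument: the highest-indexed vertex of $W$ serves as witness, via the identity $N_W(v_k) = N^<(v_k) \cap W$. Your forward direction, however, takes a genuinely different and more elementary route. You peel greedily from the top: because the $U$-threshold condition quantifies over all subsets $W \subseteq V$, every $W_i$ supplies a witness $v_i$, and since $W_i \setminus \{v_i\} = \{v_1,\ldots,v_{i-1}\}$ by construction, the witness condition $N_{W_i}(v_i) = \emptyset$ or $N_{W_i}(v_i) = (W_i \setminus \{v_i\}) \cap U$ is literally the required alternative for $N^<(v_i)$. This is the natural generalization of the classical peeling proof of Theorem~\ref{thm:threshold-order}, and it proves Theorem~\ref{thm:order} with no auxiliary machinery, and with the same $U$ throughout, as you note. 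The paper instead fixes $U$, exploits that $G[U]$ is threshold, and constructs an explicit total order $\trianglelefteq$ on $V/\sim$ (vertices identified when they have equal degrees into $U$ and into $U^c$ and lie on the same side of the bipartition); this requires the nesting properties of Lemma~\ref{lem:nesting} plus a separate proposition verifying that $\trianglelefteq$ is a total order, after which any linear extension with equivalence classes kept consecutive is shown to be a construction order. What that longer route buys is structural information your argument does not provide: it describes the collection of construction orders in terms of degree data alone (generalizing the fact that threshold construction orders are unique up to permuting vertices of equal degree), which is what Example~\ref{ex:special-order} illustrates, and the nesting lemma it rests on is reused to prove the corollary at the end of Section~\ref{sec:nuclear} characterizing Ferrers graphs among connected $U$-threshold graphs. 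Your peeling argument yields existence only --- but existence is all Theorem~\ref{thm:order} asserts, so your proof is complete as stated, and notably shorter.
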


As with the special case of threshold graphs, we call such an ordering of the vertices of a special $2$-threshold graph a \textbf{construction order}.  Vertices with empty lower neighborhoods in a construction order will continue to be called \textbf{isolated} while vertices with non-empty lower neighborhoods will be called \textbf{\textit{U}-dominating}.

\begin{example}\label{ex:nuclear} The special $2$-threshold graph in Figure~\ref{fig:graphex}(e) has a construction order $v_1 = 1$, $v_2 = 5$, $v_3 = 2$, $v_4 = 3$, and $v_5 = 4$ with  $U = \{v_1,v_2,v_4\}$.  Indeed, $v_1$ is an initial vertex, $v_4$ is isolated, and $v_2$, $v_3$, and $v_5$ are $U$-dominating, as illustrated below with the vertices in $U$ circled.

\begin{figure}[H]
    \centering
    \begin{tikzpicture}
        \filldraw[opacity=0.25] (0,0) circle (2pt);
        \draw[opacity=0.25] (0,0) circle (4pt);
        \node [opacity=0.25,below left] at (0,0) {$v_4$};
        \filldraw[opacity=0.25]  (1,0) circle (2pt);
        \node [opacity=0.25,below] at (1,0) {$v_5$};
        \filldraw  (1,1) circle (2pt);
        \draw (1,1) circle (4pt);
        \node [above left] at (1,1) {$v_1$};
        \filldraw[opacity=0.25]  (2,1) circle (2pt);
        \node [opacity=0.25,above right] at (2,1) {$v_3$};
        \filldraw  (2,0) circle (2pt);
        \draw (2,0) circle (4pt);
        \node [below right] at (2,0) {$v_2$};
        \draw[opacity=0.25] (0,0)--(1,0);
        \draw[opacity=0.25] (1,0)--(2,0)--(2,1)--(1,1)--(1,0);
        \draw[opacity=1] (1,1)--(2,0);
    \end{tikzpicture}
    \quad
    \begin{tikzpicture}
        \filldraw[opacity=0.25] (0,0) circle (2pt);
        \draw[opacity=0.25] (0,0) circle (4pt);
        \node [opacity=0.25,below left] at (0,0) {$v_4$};
        \filldraw[opacity=0.25] (1,0) circle (2pt);
        \node [opacity=0.25,below] at (1,0) {$v_5$};
        \filldraw (1,1) circle (2pt);
        \draw (1,1) circle (4pt);
        \node [above left] at (1,1) {$v_1$};
        \filldraw (2,1) circle (2pt);
        \node [above right] at (2,1) {$v_3$};
        \filldraw (2,0) circle (2pt);
        \draw (2,0) circle (4pt);
        \node [below right] at (2,0) {$v_2$};
        \draw[opacity=0.25] (0,0)--(1,0);
        \draw[opacity=0.25] (1,0)--(2,0)--(2,1)--(1,1)--(1,0);
        \draw[opacity=1] (1,1)--(2,0);
        \draw (1,1)--(2,1)--(2,0);
    \end{tikzpicture}
    \quad
    \begin{tikzpicture}
        \filldraw (0,0) circle (2pt);
        \draw (0,0) circle (4pt);
        \node [below left] at (0,0) {$v_4$};
        \filldraw[opacity=0.25] (1,0) circle (2pt);
        \node [opacity=0.25,below] at (1,0) {$v_5$};
        \filldraw (1,1) circle (2pt);
        \draw (1,1) circle (4pt);
        \node [above left] at (1,1) {$v_1$};
        \filldraw (2,1) circle (2pt);
        \node [above right] at (2,1) {$v_3$};
        \filldraw (2,0) circle (2pt);
        \draw (2,0) circle (4pt);
        \node [below right] at (2,0) {$v_2$};
        \draw[opacity=0.25] (0,0)--(1,0);
        \draw[opacity=0.25] (1,0)--(2,0)--(2,1)--(1,1)--(1,0);
        \draw[opacity=1] (1,1)--(2,0);
        \draw (1,1)--(2,1)--(2,0);
    \end{tikzpicture}
    \quad
    \begin{tikzpicture}
        \filldraw (0,0) circle (2pt);
        \draw (0,0) circle (4pt);
        \node [below left] at (0,0) {$v_4$};
        \filldraw (1,0) circle (2pt);
        \node [below] at (1,0) {$v_5$};
        \filldraw (1,1) circle (2pt);
        \draw (1,1) circle (4pt);
        \node [above left] at (1,1) {$v_1$};
        \filldraw (2,1) circle (2pt);
        \node [above right] at (2,1) {$v_3$};
        \filldraw (2,0) circle (2pt);
        \draw (2,0) circle (4pt);
        \node [below right] at (2,0) {$v_2$};
        \draw[opacity=1] (0,0)--(1,0);
        \draw[opacity=15] (1,0)--(2,0)--(2,1)--(1,1)--(1,0);
        \draw[opacity=1] (1,1)--(2,0);
        \draw (1,1)--(2,1)--(2,0);
    \end{tikzpicture}
    \caption{A construction order for the vertices of a special $2$-threshold graph.}
    \label{fig:quasiex}
\end{figure}
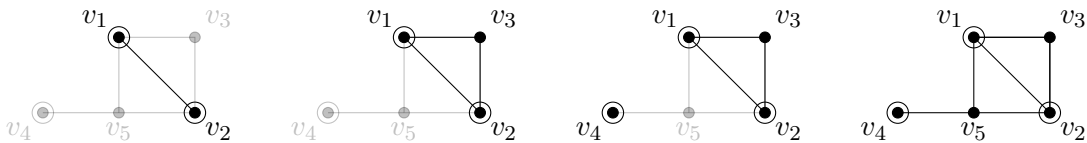
\end{example}

Together, Theorems~\ref{thm:quasiupdate} and \ref{thm:order} enable us to prove the following spanning tree enumeration formula for special $2$-threshold graphs that generalizes the well known formulae for threshold and Ferrers graphs in Theorems~\ref{thm:thresholdenum} and \ref{thm:ferrersenum}.

 \begin{theorem}\label{thm:quasienum}
    Let $G = (V,E)$ be a $U$-threshold graph with $V = \{v_1,\dots,v_n\}$. Then
    \[ \tau(G) = \frac{\prod_{v_i\in D_U} \left(\deg(v_i)+1\right) \cdot \prod_{v_j\in V \setminus D_U}\left(\deg(v_j)\right)}{|D|\cdot|U|}
    \]
    where $D \subseteq V$ is the subset of $U$-dominating vertices in $G$ and $D_U = D\cap U$.
  \end{theorem}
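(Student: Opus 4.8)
The plan is to mimic the structure of the proofs of Theorems~\ref{thm:thresholdenum} and \ref{thm:ferrersenum}, using the construction order guaranteed by Theorem~\ref{thm:order} to produce an explicit triangular rank-one perturbation whose diagonal entries we can read off. First I would invoke Theorem~\ref{thm:order} to fix an ordering $v_1,\dots,v_n$ of $V$ and the subset $U$ witnessing that $G$ is $U$-threshold, so that each $v_i$ is either isolated (with $N^<(v_i) = \emptyset$) or $U$-dominating (with $N^<(v_i) = U \cap \{v_1,\dots,v_{i-1}\}$). Guided by Theorem~\ref{thm:quasiupdate}, I would take $\va \in \mathbb{R}^n$ to be the indicator vector of the $U$-dominating vertices $D$ and $\vb \in \mathbb{R}^n$ to be the indicator vector of $U$, that is, $a_i = 1$ iff $v_i \in D$ and $b_i = 1$ iff $v_i \in U$.

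The key verification is that $L(G) + \va\vb^T$ is upper triangular. For $1 \le j < i \le n$, the entry $(\va\vb^T)(i,j)$ equals $1$ precisely when $v_i \in D$ and $v_j \in U$; I would argue that this happens if and only if $\{v_i,v_j\} \in E$, hence if and only if $L(i,j) = -1$, so the strictly-lower-triangular entries cancel. The forward direction is immediate: if $v_i$ is $U$-dominating and $v_j \in U$ with $j < i$, then $v_j \in N^<(v_i)$ by the construction order. For the converse I would note that if $\{v_i,v_j\} \in E$ with $j < i$, then $v_i$ has a nonempty lower neighborhood, so it must be $U$-dominating (i.e. $v_i \in D$), and $v_j \in N^<(v_i) = U \cap \{v_1,\dots,v_{i-1}\}$ forces $v_j \in U$. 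With triangularity established, the determinant is the product of diagonal entries: since the diagonal of $\va\vb^T$ contributes $a_i b_i = 1$ exactly when $v_i \in D \cap U = D_U$, the diagonal entry is $\deg(v_i)+1$ for $v_i \in D_U$ and $\deg(v_i)$ otherwise, giving $\det(L+\va\vb^T) = \prod_{v_i\in D_U}(\deg(v_i)+1)\cdot\prod_{v_j\in V\setminus D_U}\deg(v_j)$.

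Finally I would apply Lemma~\ref{lem:klee-stamps}, which gives $\det(L+\va\vb^T) = \left(\sum_i a_i\right)\left(\sum_i b_i\right)\tau(G)$. Here $\sum_i a_i = |D|$ and $\sum_i b_i = |U|$ by construction, so dividing yields exactly $\tau(G) = \det(L+\va\vb^T)/(|D|\cdot|U|)$, matching the claimed formula. I expect the main obstacle to be pinning down the converse direction of the triangularity argument cleanly; in particular, one must handle the subtle point that a $U$-dominating vertex $v_i$ need not be adjacent to every earlier vertex in $U$ unless those vertices actually precede it, and one must confirm that no edge $\{v_i,v_j\}$ with $j<i$ can arise when $v_i$ is isolated in the construction order. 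It is also worth double-checking the degenerate cases (e.g. when $D_U = \emptyset$ or when $U = V$, recovering the threshold formula) to ensure the denominator $|D|\cdot|U|$ is well-defined and the formula specializes correctly to Theorems~\ref{thm:thresholdenum} and \ref{thm:ferrersenum}.
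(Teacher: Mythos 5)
Your proposal is correct and follows essentially the same route as the paper: the paper's proof of Theorem~\ref{thm:quasienum} likewise takes $\va$ and $\vb$ to be the indicator vectors of $D$ and $U$ with respect to the construction order from Theorem~\ref{thm:order}, cites the triangularity argument (your chain of equivalences for $j<i$ is exactly the one given in the proof of Theorem~\ref{thm:quasiupdate}), reads off the diagonal entries as $\deg(v_i)+1$ on $D_U$ and $\deg(v_i)$ elsewhere, and divides by $|D|\cdot|U|$ via Lemma~\ref{lem:klee-stamps}. The "subtle point" you flag about the converse of triangularity is already handled by the definition of a construction order, since $N^<(v_i)=U\cap\{v_1,\dots,v_{i-1}\}$ for every $U$-dominating vertex, so no additional work is needed there.
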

  
  \begin{example} Consider once more the special $2$-threshold graph $G$ in Figure~\ref{fig:graphex}(e).  With the construction order from Example~\ref{ex:nuclear}, $U = \{v_1, v_2, v_4\}$, $D = \{v_2,v_3,v_5\}$, and $D_U = \{v_2\}$. By Theorem~\ref{thm:quasienum}, $G$ has
$$\tau(G) = \frac{\deg(v_1) \cdot \left(\deg(v_2)+1\right) \cdot \deg(v_3) \cdot \deg(v_4) \cdot \deg(v_5)}{3 \cdot 3} = \frac{3 \cdot (3+1) \cdot 2 \cdot 1 \cdot 3}{9} = 8$$ spanning trees, which we illustrate in Figure~\ref{fig:quasienum} of the appendix.
\end{example}
\newpage
To see how Theorem~\ref{thm:quasienum} provides a common generalization of Theorem~\ref{thm:thresholdenum} and Theorem~\ref{thm:ferrersenum}, observe the following: 
\begin{itemize}
    \item If $G = (V,E)$ is a threshold graph on $n$ vertices, then it is $U$-threshold with $U = V$ by Remark~\ref{remark:threshold-special}, and $D = N(v_1)$.  Substituting $D_U = D$, $|D| = \deg(v_1)$, and $|U| = n$ into the formula in Theorem~\ref{thm:quasienum} -- and simplifying appropriately -- yields the formula in Theorem~\ref{thm:thresholdenum}.  
    \item If $G = (V,E)$ is a Ferrers graph with $V = R \sqcup C$, then it is $U$-threshold with $U = C$ by Proposition~\ref{prop:ferrersqt}, $D = R$ since every vertex in $R$ is adjacent to $v_1 = c_1$, and $D_U = D \cap U = R \cap C = \emptyset$.  Substituting $D = R$ and $U = C$ into the formula in Theorem~\ref{thm:quasienum} -- and simplifying appropriately -- yields the formula in Theorem~\ref{thm:ferrersenum}.
\end{itemize}

We proceed with the proofs of Theorems~\ref{thm:quasiupdate} and \ref{thm:quasienum} and leave the proof of Theorem~\ref{thm:order} for the end of the section since it has numerous components, none of which are directly referenced in the proofs of Theorems~\ref{thm:quasiupdate} and \ref{thm:quasienum}.

\begin{proof}[Proof of Theorem~\ref{thm:quasiupdate}.]
  Starting with the reverse direction, let $G = (V,E)$ be a special $2$-threshold graph on $n$ vertices.  By Theorem~\ref{thm:order}, there exists a construction order $v_1,\ldots,v_n$ of $V$ for some $U \subseteq V$.  Then the rank-one perturbation $L(G)+\va\vb^T$ where $\va$ and $\vb$ are the indicator vectors of the subset $D$ of $U$-dominating vertices and $U$, respectively, that is,
$$a_i = \begin{cases}1 & \text{if }v_i\in D,\\ 0 &\text{else},\end{cases}\qquad b_i = \begin{cases}1 & \text{if }v_i\in U,\\ 0 &\text{else},\end{cases}$$ is upper triangular since for $1 \leq j < i \leq n$, $(\va\vb^T)(i,j)=1$ if and only if $v_i \in D$ and $v_j \in U$ if and only if $\{v_i,v_j\} \in E$ if and only if $L(G)(i,j)=-1$.
   
  For the forward direction, suppose $G = (V,E)$ is a graph with $V = \{v_1,\dots,v_n\}$ such that $L(G)$ admits an upper triangular rank-one perturbation, that is, there exists a rank-one matrix $M \in \mathbb{R}^{n \times n}$ such that $L(G) + M$ is upper-triangular. Since $M$ is rank-one, it can be factored into an outer product $M = \va\vb^T$ of two column vectors $\va = \langle a_i \rangle$ and $\vb= \langle b_i \rangle$ in $\mathbb{R}^n$. Moreover, since $L(G) + M$ is upper triangular and $L(G)(i,j) \in \{-1,0\}$ for $1 \leq j < i \leq n$, it follows that $M(i,j) \in \{0,1\}$ for $1 \leq j < i \leq n$, and we may assume without loss of generality that $M \in \{0,1\}^{n \times n}$ and $\va , \vb \in \{0,1\}^n$.  Let $D = \{v_i \in V \ | \ a_i = 1\}$ and $U = \{v_i \in V \ | \ b_i = 1\}$.  If $v_i \in V \setminus D$, then $a_i = 0$, which means $L(G)(i,j) = M(i,j) = 0$ for $1 \leq j < i \neq n$, hence $N^<(v_i) = \emptyset$.  If $v_i \in D$, on the other hand, then $a_i = 1$, which means $$M(i,j) = \begin{cases} 1 & \text{if } v_j \in U, \\ 0 & \text{else},\end{cases} \quad \Longleftrightarrow \quad L(G)(i,j) = \begin{cases} -1 & \text{if } v_j \in U, \\ 0 & \text{else},\end{cases}$$ hence $N^<(v_i) = U \cap \{v_1, \ldots, v_{i-1}\}.$  Therefore, $v_1, \ldots, v_n$ is a construction order for $U$, and $G$ is special $2$-threshold by Theorem~\ref{thm:order}.
  \end{proof}

\begin{proof}[Proof of Theorem~\ref{thm:quasienum}.]
  From the proof of Theorem~\ref{thm:quasiupdate}, the rank-one perturbation $L(G)+\va\vb^T$ where
$$a_i = \begin{cases}1 & \text{if }v_i\in D,\\ 0 &\text{else},\end{cases}\qquad b_i = \begin{cases}1 & \text{if }v_i\in U,\\ 0 &\text{else},\end{cases}$$ is upper triangular. Since the diagonal entries of $L(G)+\va\vb^{T}$ are given by
  \[\left(L(G)+\va\vb^{T}\right)\left(i,i\right)=
    \begin{cases}
    \deg\left(v_{i}\right)+1 & \text{if } v_i \in D_U,\\
    \deg\left(v_{i}\right) & \text{else.}
    \end{cases}
  \]
  and $\va$ and $\vb$ are the indicator vectors for $D$ and $U$ in $V$, respectively, we know that 
  \[
  \det\left(L(G)+\va\vb^{T}\right) = \prod_{v\in D_U} \left(\deg(v)+1\right) \prod_{v\in V \setminus D_U}\left(\deg(v)\right),
  \]
  $\sum_{i=1}^n a_i = |D|$, and $\sum_{i=1}^n b_i = |U|$.  The desired expression for $\tau(G)$ follows directly from Lemma~\ref{lem:klee-stamps}.
 \end{proof} 

It remains to prove Theorem~\ref{thm:order}.  Since every special $2$-threshold graph $G = (V,E)$ is $U$-threshold for some $U \subseteq V$, it suffices to show that $G$ can be constructed from an initial vertex by repeatedly appending isolated or $U$-dominating vertices.  Because the induced subgraph $G[U]$ is threshold, our approach to proving Theorem~\ref{thm:order} will be to extend a threshold construction order on $U$ to a special $2$-threshold order on $V$.  A challenge with this approach, however, is that construction orders of threshold graphs are not unique.  For instance, any~ordering of the vertices of a complete graph yields a valid construction order.  Fortunately, the proof of Theorem~\ref{thm:threshold-order} in \cite{chvatal_hammer_1977} is constructive, and it is straightforward to check that vertices with the same degree appear consecutively in any construction order and that construction orders are unique up to permutations of vertices that share the same degree.  In other words, the construction orders of a threshold graph $G = (V,E)$ induce a common total order $\preceq$ on $V/\sim$ where $v \sim w$ if and only if $\deg(v) = \deg(w)$ for every $v,w \in V$. See Example~\ref{ex:special-order} for an illustration. 

Building upon this framework, we prove Theorem~\ref{thm:order} by refining and extending the order $\preceq$ from threshold to special $2$-threshold graphs.  Let $G = (V,E)$ be a $U$-threshold graph for some $U \subseteq V$, let $U^c = V \setminus U$, and consider the equivalence relation on $V$ given by $x \sim y$ if and only if $\deg_U(x) = \deg_U(y)$, $\deg_{U^c}(x) = \deg_{U^c}(y)$, and either $x,y \in U$ or $x,y \in U^c$.  Since the induced subgraph $G[U]$ is threshold ( Remark~\ref{remark:threshold-special}), there is a unique total order $\preceq$ on $U/\sim$, which we refine and extend to an order relation $\trianglelefteq$ on $V/\sim$ given by $x \trianglelefteq y$ if and only if one of the following conditions hold:
\smallskip
\begin{itemize} \itemsep 4pt \item $x,y \in U$, $x \preceq y$, and $\deg_{U^c}(x) \geq \deg_{U^c}(y)$, \item $x 
\in U$, $y \in U^c$, and $\{x,y\} \in E$, \item $x \in U^c$, $y \in U$, and $\{x,y\} \notin E$, \item $x,y \in U^c$ and $\deg_U(x) \leq \deg_U(y)$. \end{itemize} 
\smallskip
We claim that $\trianglelefteq$ is a total order on $V/\sim$, but before we prove it, let us first illustrate $\preceq$ and $\trianglelefteq$ with an example and establish a lemma regarding neighborhoods in $U$-threshold graphs.

\begin{example}\label{ex:special-order}
Consider the graph $G = (V,E)$ in Figure~\ref{fig:special}.  It is special $2$-threshold by Theorem~\ref{conj:forbidden}.  We leave it to the reader to verify that $G$ is $U$-threshold for $U = \{a,b,c,d,e,f\} \subseteq V$, which implies that the induced subgraph $G[U]$ is threshold.  (The thresholdness of $G[U]$ can also be verified with Theorem~\ref{thm:thresholdforbidden}).  Since $\deg_U(a) = \deg_U(c) = \deg_U(f) = 1$, $\deg_U(b) = 5$, and $\deg_U(d) = \deg_U(e) = 2$, the elements of $U/\sim$ are $\{a,c,f\}$, $\{b\}$, and $\{d,e\}$, and the algorithm in \cite{chvatal_hammer_1977} yields the total order $\{d,e\} \preceq \{a,c,f\} \preceq \{b\}$ on $U/\sim$.  It follows that, in every construction order for $G[U]$, vertices $d$ and $e$ appear in the first two positions (in either permutation), vertices $a$, $c$, and $f$ appear in the third through fifth positions (in any permutation), and vertex $b$ appears in the sixth position.  

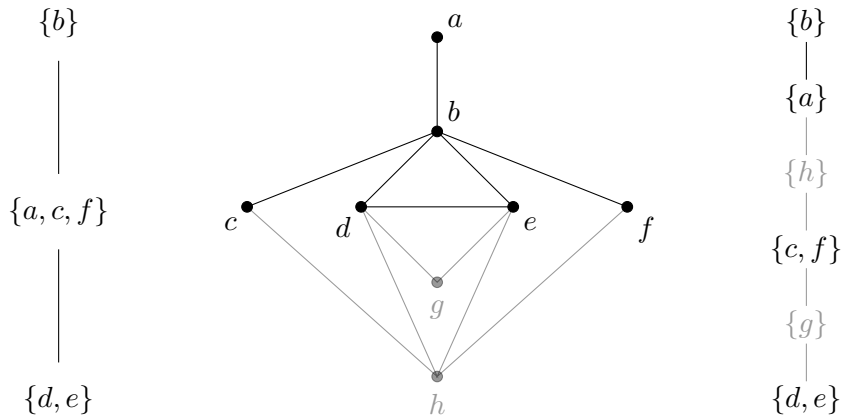
\begin{figure}[H]
    \centering
    \begin{tikzpicture}
        \node [opacity=1] at (0,-2.5) {$\{d,e\}$};
        \node [opacity=1] at (0,0) {$\{a,c,f\}$};
        \node [opacity=1] at (0,2.5) {$\{b\}$};
        \draw[opacity=1] (0,-2)--(0,-0.5);
        \draw[opacity=1] (0,0.5)--(0,2);
    \end{tikzpicture}
    \hspace{1cm}
    \begin{tikzpicture}
        \filldraw[opacity=1] (-2.5,0) circle (2pt);
        \node [opacity=1,below left] at (-2.5,0) {$c$};
        %\draw (-2.5,0) circle (4pt);
        \filldraw[opacity=1]  (-1,0) circle (2pt);
        \node [opacity=1,below left] at (-1,0) {$d$};
        %\draw (-1,0) circle (4pt);
        \filldraw  (1,0) circle (2pt);
        \node [below right] at (1,0) {$e$};
        %\draw (1,0) circle (4pt);
        \filldraw[opacity=1]  (2.5,0) circle (2pt);
        \node [opacity=1,below right] at (2.5,0) {$f$};
        %\draw (2.5,0) circle (4pt);
        \filldraw[opacity=0.4] (0,-2.25) circle (2pt);
        \node [opacity=0.4,below] at (0,-2.35) {$h$};
        \filldraw[opacity=0.4]  (0,-1) circle (2pt);
        \node [opacity=0.4,below] at (0,-1.1) {$g$};
        \filldraw  (0,1) circle (2pt);
        \node [above right] at (0,1) {$b$};
        %\draw (0,1) circle (4pt);
        \filldraw[opacity=1]  (0,2.25) circle (2pt);
        \node [opacity=1,above right] at (0,2.25) {$a$};
        %\draw (0,2.25) circle (4pt);
        \draw[opacity=1] (0,2.25)--(0,1);
        \draw[opacity=1] (-2.5,0)--(0,1)--(-1,0)--(1,0)--(0,1)--(2.5,0);
        \draw[opacity=0.4] (-2.5,0)--(0,-2.25)--(-1,0)--(0,-1)--(1,0)--(0,-2.25)--(2.5,0);
    \end{tikzpicture}
    \hspace{1cm}
    \begin{tikzpicture}
        \node [opacity=1] at (0,-2.5) {$\{d,e\}$};
        \node [opacity=0.4] at (0,-1.5) {$\{g\}$};
        \node [opacity=1] at (0,-0.5) {$\{c,f\}$};
        \node [opacity=0.4] at (0,0.5) {$\{h\}$};
        \node [opacity=1] at (0,1.5) {$\{a\}$};
        \node [opacity=1] at (0,2.5) {$\{b\}$};
        \draw[opacity=0.4] (0,-2.25)--(0,-1.75);
        \draw[opacity=0.4] (0,-1.25)--(0,-0.75);
        \draw[opacity=0.4] (0,-0.25)--(0,0.25);
        \draw[opacity=0.4] (0,0.75)--(0,1.25);
        \draw[opacity=1] (0,1.75)--(0,2.25);
    \end{tikzpicture}
    \caption{A $U$-threshold graph (center) and Hasse diagrams of the total orders $\preceq$ on $U/\sim$ (left) and $\trianglelefteq$ on $V/\sim$ (right).}
    \label{fig:special}
\end{figure}

Furthermore, since $\deg_{U^c}(a) = \deg_{U^c}(b) = \deg_{U^c}(g) = \deg_{U^c}(h)= 0$, $\deg_{U^c}(c) = \deg_{U^c}(f) = 1$, $\deg_{U^c}(d) = \deg_{U^c}(e) = 2$, $\deg_U(g) = 2$, and $\deg_U(h) = 4$, the elements of $V/\sim$ are $\{a\}$, $\{b\}$, $\{c,f\}$, $\{d,e\}$, $\{g\}$, and $\{h\}$.  Note that the element $\{a,c,f\}$ in $U/\sim$ is partitioned into $\{a\}$ and $\{c,f\}$ in $V/\sim$ since $\deg_{U^c}(a) \neq \deg_{U^c}(c) = \deg_{U^c}(f)$.  Applying the conditions in the definition of $\trianglelefteq$ to the elements of $V/\sim$ yields the ordering $\{d,e\} \trianglelefteq \{g\} \trianglelefteq \{c,f\} \trianglelefteq \{h\} \trianglelefteq \{a\} \trianglelefteq \{b\}$.  For instance, $\{h\}$ succeeds $\{d,e\}$, $\{g\}$, and $\{c,f\}$ and precedes $\{a\}$ and $\{b\}$ since $\deg_U(g) \leq \deg_{U}(h)$ and $h$ is adjacent to $c$, $d$, $e$, and $f$ but not $a$ or $b$.  One can check that any ordering of $V$ in which vertices $d$ and $e$ appear in the first two positions (in either permutation), vertices $c$ and $f$ appear in the fourth and fifth positions (in either permutation), and vertices $g$, $h$, $a$ and $b$ appear in the third, sixth, seventh, and eigth positions, respectively, is a valid construction order for $G$.
\end{example}
 
 \begin{lemma}\label{lem:nesting}
 If $G = (V,E)$ is a $U$-threshold graph for some $U \subseteq V$ and $U^c = V \setminus U$, then \begin{enumerate}[\quad (a)]
 \item $U^c$ is an independent set in $G$,
 \item the neighborhoods of the vertices in $U^c$ are nested,
 \item the neighborhoods of the vertices in $U$ restricted to $U^c$ are nested, and
 \item $N_{U^c}(v) \supseteq N_{U^c}(w)$ for every $v,w \in U$ such that $v \prec w$ in $G[U]$.
 \end{enumerate}
 \end{lemma}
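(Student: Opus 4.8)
The plan is to establish all four parts by applying the defining property of $U$-threshold graphs to small induced subgraphs, in each case exhibiting a set $W$ on which \emph{no} vertex $x$ can satisfy $N_W(x)=\emptyset$ or $N_W(x)=(W\setminus\{x\})\cap U$. For part (a), if $x,y\in U^c$ were adjacent I would take $W=\{x,y\}$: here $N_W(x)=\{y\}\neq\emptyset$ while $(W\setminus\{x\})\cap U=\emptyset$ because $y\in U^c$, and symmetrically for $y$, so neither vertex qualifies, contradicting the $U$-threshold condition. Hence $U^c$ is independent, and in particular every neighbor of a $U^c$-vertex lies in $U$.

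For parts (b) and (c) I would argue by contradiction from a failure of nesting. If $N(x),N(y)$ are incomparable for $x,y\in U^c$, choose $p\in N(x)\setminus N(y)$ and $q\in N(y)\setminus N(x)$, which lie in $U$ by part (a), and test $W=\{x,y,p,q\}$. One checks directly that each vertex fails: the two $U^c$-vertices each have a single neighbor in $W$ whereas $(W\setminus\{\cdot\})\cap U=\{p,q\}$, and each of $p,q$ has a neighbor outside the required set $(W\setminus\{\cdot\})\cap U$; this holds whether or not $p\sim q$. Part (c) is the mirror image: if $N_{U^c}(v),N_{U^c}(w)$ are incomparable for $v,w\in U$, pick $s\in N_{U^c}(v)\setminus N_{U^c}(w)$ and $t\in N_{U^c}(w)\setminus N_{U^c}(v)$ and run the same four-vertex check on $\{v,w,s,t\}$, using that $s\not\sim t$ by part (a).

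The substantive part is (d), where I must align the threshold order $\preceq$ on $U$ with reverse inclusion of the sets $N_{U^c}(\cdot)$. By part (c) these sets are nested, so it suffices to rule out $N_{U^c}(v)\subsetneq N_{U^c}(w)$ when $v\prec w$; equivalently, assuming a witness $t\in U^c$ with $t\sim w$ and $t\not\sim v$, I aim for a contradiction. The key input is a property of the construction order of the threshold graph $G[U]$: since $v\prec w$ lie in different $\preceq$-classes, $v$ precedes $w$ in every construction order, so when $w$ is appended all of $v$'s class is already present. If $v\sim w$ then $w$ must be dominating at that moment, which forces $N_U(v)\cup\{v\}\subseteq N_U(w)\cup\{w\}$; if $v\not\sim w$ then $w$ is appended as an isolated vertex, forcing $N_U(w)\subseteq N_U(v)$. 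Because the classes differ, each inclusion is strict, yielding in the first case a vertex $z\in U$ with $z\sim w$ and $z\not\sim v$, and in the second a vertex $z\in U$ with $z\sim v$ and $z\not\sim w$.

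With this $z$ in hand I would finish by applying the $U$-threshold condition to $W=\{v,w,z,t\}$ and checking that no vertex qualifies: in the subcase $v\sim w$ the vertex $v$ has $N_W(v)=\{w\}\neq\{w,z\}=(W\setminus\{v\})\cap U$, in the subcase $v\not\sim w$ it has $N_W(v)=\{z\}\neq\{w,z\}$, and in both subcases $w$, $z$, and $t$ fail as well, giving the required contradiction. I expect the construction-order facts to be the main obstacle, precisely because $\preceq$ is \emph{not} monotone in degree (a higher-degree class may precede a lower one), so the adjacency of $v$ and $w$ and the direction of neighborhood nesting must both be deduced from the isolated/dominating type of $w$ rather than from degrees; the delicate point is that the type of the witness $z$ must match the subcase $v\sim w$ versus $v\not\sim w$, since only the correctly oriented $z$ makes every vertex of $\{v,w,z,t\}$ fail the defining condition.
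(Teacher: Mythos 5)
Your proposal is correct and follows essentially the same strategy as the paper's proof: apply the $U$-threshold condition to carefully chosen induced subgraphs, reducing part (d) via part (c) to ruling out a strict inclusion and then exhibiting a four-vertex set $\{v,w,z,t\}$ on which every vertex fails the defining condition. The only differences are cosmetic ones in your favor: in (b) and (c) you use minimal four-vertex witnesses by contradiction where the paper takes $W$ to be the full union (resp.\ symmetric difference) of the relevant neighborhoods, and in (d) you explicitly justify, via the construction order of $G[U]$, the existence of the distinguishing vertex $z$ (the paper's $u$) and spell out the final check, both of which the paper leaves as ``straightforward.''
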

 
 \begin{proof}
Let $G = (V,E)$ be a $U$-threshold graph for some $U \subseteq V$.

For part (a), suppose $v,w \in U^c$ and consider the induced subgraph $G[W]$ with $W = \{v,w\}$.  Since neither $v$ nor $w$ belongs to $U$, one (in fact each) of them must be an isolated vertex in $G[W]$ by the definition of $U$-threshold, hence $\{v,w\} \notin E$.  Since there is no edge between any pair of vertices in $U^c$, $U^c$ is an independent set in $G$.

For part (b), suppose $v,w \in U^c$ such that $N(v) \neq \emptyset$ and $N(w) \neq \emptyset$, and consider the induced subgraph $G[W]$ where $W = \{v,w\} \cup N(v) \cup N(w)$.  By part (a), $N(v) \cup N(w) \subseteq U$, which means $N(v) \cup N(w) = U \cap W$.  Since every vertex in $N(v) \cup N(w)$ is adjacent to $v$ or $w$, $G[W]$ does not have any isolated vertices. Moreover, $N_{W}(x) \neq (W \setminus \{x\}) \cap U$ for every $x \in N(v) \cup N(w)$.  Therefore, since $G$ is $U$-threshold, it must be the case that $N(v) = U \cap W = N(v) \cup N(w)$ or $N(w) = U \cap W = N(v) \cup N(w)$, which means $N(v) \subseteq N(w)$ or $N(w) \subseteq N(v)$.

For part (c), let $v,w \in U$ such that $N_{U^c}(v) \neq \emptyset$ and $N_{U^c}(w) \neq \emptyset$, and consider the induced subgraph $G[W]$ with $W = \{v,w\} \cup (N_{U^c}(v) \cup N_{U^c}(w)) \setminus (N_{U^c}(v) \cap N_{U^c}(w))$. Since every vertex in $W \setminus \{v,w\}$ is adjacent to exactly one of $v$ or $w$, it must be the case that one of $v$ or $w$ is an isolated or $U$-dominating vertex in $G[W]$, which implies that one of $v$ or $w$ is not adjacent to any vertices in $W \setminus \{v,w\}$.   Therefore, $N_{U^c}(v) \subseteq N_{U^c}(w)$ or $N_{U^c}(w) \subseteq N_{U^c}(v)$. 

For part (d), let $v,w \in U$ such that $v \prec w$ in the induced threshold graph $G[U]$.  By part (c), either $N_{U^c}(v) \subseteq N_{U^c}(w)$ or $N_{U^c}(v) \supseteq N_{U^c}(w)$.  Suppose for contradiction that $N_{U^c}(v) \subsetneq N_{U^c}(w)$ and let $z \in N_{U^c}(w) \setminus N_{U^c}(v)$.  There are two cases to consider based on whether or not $v$ and $w$ are adjacent: either (i) $\{v,w\} \in E$ and there exists $u \in U$ such that $\{u,v\} \notin E$ and $\{u,w\} \in E$ or (ii) $\{v,w\} \notin E$ and there exists $u \in U$ such that $\{u,v\} \in E$ and $\{u,w\} \notin E$.  For each case, it is straightforward to check that the induced subgraph $G[W]$ on $W = \{v,w,z,u\}$ fails to be $U$-threshold, which contradicts the assumptions that $G$ is $U$-threshold and there exists $z \in N_{U^c}(w) \setminus N_{U^c}(v)$.  It must, therefore, be the case that $N_{U^c}(v) \supseteq N_{U^c}(w)$.
 \end{proof} 

\begin{proposition}
If $G = (V,E)$ is a $U$-threshold graph for some $U \subseteq V$, then the relation $\trianglelefteq$ is a total order on $V/\sim$.
\end{proposition}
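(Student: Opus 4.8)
The plan is to show that $\trianglelefteq$ is a total preorder on $V$ -- that is, reflexive, transitive, and connex (any two elements comparable) -- whose symmetric part coincides exactly with $\sim$. A total preorder with this property descends to a well-defined total order on the quotient $V/\sim$, which is precisely the assertion, so the work reduces to verifying these structural properties at the level of representatives. I would first record a dictionary translating the degree inequalities in the definition of $\trianglelefteq$ into containments of neighborhoods. By Lemma~\ref{lem:nesting}(c) the sets $N_{U^c}(x)$ for $x \in U$ are nested, so for $x,y \in U$ the condition $\deg_{U^c}(x) \ge \deg_{U^c}(y)$ is equivalent to $N_{U^c}(x) \supseteq N_{U^c}(y)$; similarly, using Lemma~\ref{lem:nesting}(a) to identify $N(x)$ with $N_U(x)$ for $x \in U^c$, Lemma~\ref{lem:nesting}(b) makes these neighborhoods nested, so for $x,y \in U^c$ the condition $\deg_U(x) \le \deg_U(y)$ is equivalent to $N_U(x) \subseteq N_U(y)$. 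These two equivalences are the workhorses of the argument.

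Reflexivity is immediate from reflexivity of $\preceq$ and of $\le$. For connexity I would split on membership: if $x,y \in U$, then since $\preceq$ is total we may assume $x \preceq y$, whereupon Lemma~\ref{lem:nesting}(d) handles the case $x \prec y$ and the nesting of part (c) handles the case that $x,y$ share a $\preceq$-class, in both instances yielding comparability under $\trianglelefteq$; if $x,y \in U^c$, comparability of the integers $\deg_U(x)$ and $\deg_U(y)$ settles it; and if one of $x,y$ lies in $U$ and the other in $U^c$, then exactly one of $\{x,y\} \in E$ or $\{x,y\} \notin E$ holds, and the two mixed conditions in the definition of $\trianglelefteq$ are precisely designed to cover these two alternatives. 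For the symmetric part I would observe that the two mixed conditions are mutually exclusive, so $x \trianglelefteq y$ and $y \trianglelefteq x$ can never both hold when $x$ and $y$ lie on opposite sides of $U$; when both lie in $U$, antisymmetry of $\preceq$ and the two opposing degree inequalities force equal $\deg_U$ and equal $\deg_{U^c}$, and when both lie in $U^c$ the opposing inequalities force equal $\deg_U$ while Lemma~\ref{lem:nesting}(a) makes $\deg_{U^c}$ identically zero on $U^c$. Thus $x \trianglelefteq y \trianglelefteq x$ yields $x \sim y$, and the converse is checked directly, so the symmetric part is exactly $\sim$.

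The substance of the proof is transitivity, which I would establish by examining the eight cases determined by the membership of $x$, $y$, and $z$ in $U$ or $U^c$. The two uniform cases (all three in $U$, or all three in $U^c$) are immediate from transitivity of $\preceq$ and of $\le$ together with monotonicity of the relevant degrees. In each of the six remaining cases, where membership changes at least once along $x,y,z$, I would use the neighborhood dictionary above to rewrite a hypothesis such as $x \trianglelefteq y$ as a containment of $U^c$- or $U$-neighborhoods, combine it with the edge or non-edge encoded by the mixed conditions, and read off the required relation between $x$ and $z$.

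I expect the main obstacle to be the two cases in which $y$ lies on the opposite side of $U$ from both $x$ and $z$ -- namely $x,z \in U$ with $y \in U^c$, and $x,z \in U^c$ with $y \in U$. In the first, the hypotheses give $\{x,y\} \in E$ and $\{y,z\} \notin E$, exhibiting a vertex $y \in U^c$ with $y \in N_{U^c}(x) \setminus N_{U^c}(z)$; the nesting of Lemma~\ref{lem:nesting}(c) then forces $N_{U^c}(z) \subsetneq N_{U^c}(x)$, while Lemma~\ref{lem:nesting}(d), applied contrapositively, rules out $z \prec x$, so $x \preceq z$ and hence $x \trianglelefteq z$. The second case is dual: the witness $y \in N_U(z) \setminus N_U(x)$ together with Lemma~\ref{lem:nesting}(b) gives $N_U(x) \subsetneq N_U(z)$, whence $\deg_U(x) < \deg_U(z)$ and $x \trianglelefteq z$. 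Once the bookkeeping for these two cases is in place, the remaining mixed cases follow by the same mechanism with strictly easier inequalities.
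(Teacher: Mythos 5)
Your proposal is correct and follows essentially the same route as the paper: it verifies reflexivity, transitivity, totality, and that mutual comparability coincides with $\sim$ (the paper's antisymmetry on $V/\sim$), with the same eight-case membership analysis for transitivity and the same appeals to Lemma~\ref{lem:nesting}(a)--(d), including the two genuinely delicate cases where $y$ lies on the opposite side of $U$ from $x$ and $z$. Your explicit contrapositive use of Lemma~\ref{lem:nesting}(d) to get $x \preceq z$ is a slightly more careful rendering of a step the paper states tersely, but it is the same argument.
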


\begin{proof} Let $G = (V,E)$ be a $U$-threshold graph for some $U \subseteq V$.  We will show that $\trianglelefteq$ is reflexive, antisymmetric, transitive, and total on $V/\sim$.
\smallskip
\begin{itemize} \itemsep 4pt
\item (Reflexivity) Suppose $x,y \in V$ such that $x \sim y$.  Then $\deg_U(x) = \deg_U(y)$, $\deg_{U^c}(x) = \deg_{U^c}(y)$, and either $x,y \in U$ or $x,y \in U^c$.  Since $x \sim y$ in $V$ implies $x \sim y$ in $U$ for any $x,y \in U$, it follows that $x \trianglelefteq y$ in $V$.  

\item (Antisymmetry) Suppose $x,y \in V$ such that $x \trianglelefteq y$ and $x \trianglerighteq y$.  Then $x,y \in U$ or $x,y \in U^c$; otherwise, $\{x,y\} \in E$ and $\{x,y\} \notin E$, which is impossible.  If $x,y \in U^c$, then $\deg_{U}(x) = \deg_U(y)$ by the definition of $\trianglelefteq$ and $\deg_{U^c}(x) = \deg_{U^c}(y) = 0$ by Lemma~\ref{lem:nesting}(a).  Similarly, if $x,y \in U$, then $\deg_{U}(x) = \deg_{U}(y)$ and $\deg_{U^c}(x) = \deg_{U^c}(y)$ by the definitions of $\sim$ on $U$ and $\trianglelefteq$. In either case, $x \sim y$ in $V$.  

\item (Transitivity) Suppose $x,y,z \in V$ such that $x \trianglelefteq y$ and $y \trianglelefteq z$. We consider the eight possible combinations of memberships for $x$, $y$, and $z$ in $U$ and $U^c$:

\begin{itemize} \itemsep 2pt
    \item If $x, y, z \in U$, then $x \preceq y \preceq z$ and $\deg_{U^c}(x) \geq \deg_{U^c}(y) \geq \deg_{U^c}(z)$. Thus, $x \trianglelefteq z$ by the transitivity of $\preceq$ and $\geq$. 
    \item If $x, y \in U$ and $z \in U^c$, then $\deg_{U^c}(x) \geq \deg_{U^c}(y)$ and $\{y,z\} \in E$.  By Lemma~\ref{lem:nesting}(c), $N_{U^c}(x) \supseteq N_{U^c}(y)$.  Therefore, $z \in N_{U^c}(y) \subseteq N_{U^c}(x)$, which means $\{x,z\} \in E$ and $x \trianglelefteq z$.
    \item If $x, z \in U$ and $y \in U^c$, then $\{x, y\} \in E$ and $\{y, z\} \notin E$.  Since $y \in N_{U^c}(x)$ and $y \notin N_{U^c}(z)$, and $N_{U^c}(x)$ and $N_{U^c}(z)$ are nested by Lemma~\ref{lem:nesting}(c), it must be the case that $\deg_{U^c}(x) \geq \deg_{U^c}(z)$.  Moreover, $x \preceq z$ by Lemma~\ref{lem:nesting}(d), hence $x \trianglelefteq z$.
    \item If $y, z \in U$ and $x \in U^c$, then $\{x,y\} \notin E$ and $\deg_{U^c}(y) \geq \deg_{U^c}(z)$. By Lemma~\ref{lem:nesting}(c), $N_{U^c}(y) \supseteq N_{U^c}(z)$.  Therefore, $x \notin N_{U^c}(y) \supseteq N_{U^c}(z)$, which means $\{x,z\} \notin E$ and $x \trianglelefteq z$.
    \item If $x \in U$ and $y, z \in U^c$, then $\{x, y\} \in E$ and $\deg_U(y) \leq \deg_U(z)$. By Lemma~\ref{lem:nesting}(a-b), $N(y) \subseteq N(z)$. Therefore, $x \in N(y) \subseteq N(z)$, which means $\{x,z\} \in E$ and $x \trianglelefteq z$.
    \item If $y \in U$ and $x, z \in U^c$, then $\{x,y\} \notin E$ and $\{y,z\} \in E$. 
    Since $y \notin N(x)$ and $y \in N(z)$, and $N(x)$ and $N(z)$ are nested by Lemma~\ref{lem:nesting}(a-b), it must be the case that $\deg_U(x) \subseteq \deg_U(z)$.  Thus, $x \trianglelefteq z$.
    \item If $z \in U$ and $x, y \in U^c$, then $\deg_U(x) \leq \deg_U(y)$ and $\{y,z\} \notin E$. By Lemma~\ref{lem:nesting}(a-b), $N(x) \subseteq N(y)$. Therefore, $z \notin N(y) \supseteq N(x)$, which means $\{x,z\} \notin E$ and $x \trianglelefteq z$.  
    \item  If $x, y, z \in U^c$, then $\deg_U(x) \leq \deg_U(y) \leq \deg_U(z)$, and $x \trianglelefteq z$ by the transitivity of~$\leq$.
\end{itemize}

\item (Totality) Suppose $x,y \in V$.  If exactly one of $x$ and $y$ belongs to $U$, then $x \trianglelefteq y$ or $x \trianglerighteq y$ since $\{x,y\} \in E$ or $\{x,y\} \notin E$.  If $x,y \in U^c$, then $x \trianglelefteq y$ or $x \trianglerighteq y$ by Lemma~\ref{lem:nesting}(b).  If $x,y \in U$, then $x \preceq y$ or $x \succeq y$ since $\preceq$ is a total order on $U/\sim$, and $x \trianglelefteq y$ or $x \trianglerighteq y$ by Lemma~\ref{lem:nesting}(c-d).
\end{itemize}
Therefore, the binary relation $\trianglelefteq$ is a total order on $V/\sim$.
\end{proof}

We now have all the necessary tools to prove Theorem~\ref{thm:order}.

\begin{proof}[Proof of Theorem~\ref{thm:order}.]
Starting with the forward direction, let $G = (V,E)$ be a special $2$-threshold graph on $n$ vertices.  Then there exists $U \subseteq V$ such that $G$ is $U$-threshold (Remark~\ref{rem:U-threshold}).  We claim that any ordering of $V$ obtained from $\trianglelefteq$ on $V/\sim$ with the elements of each equivalence class inserted consecutively satisfies the desired property. Let $v_1, \ldots, v_n$ be such an ordering and suppose $v_k \in V$ such that $N^<(v_k) \neq \emptyset$.  It suffices to show there is no vertex $v_j$ with $j < k$ such that \begin{center} (i) $v_j \in U^c$ and $\{v_j,v_k\} \in E$ \quad or \quad (ii) $v_j \in U$ and $\{v_j,v_k\} \notin E$. \end{center}  For (i), Lemma~\ref{lem:nesting}(a) implies that $v_k \in U$, which means $v_j \triangleright v_k$ and the ordering $v_1, \ldots, v_n$ could not have arisen as a refinement of $\trianglelefteq$.  For (ii), our assumption that $N^<(v_k) \neq \emptyset$ and (i) imply that $v_k \in U^c$; otherwise, $v_k$ would be a dominating vertex in $G[U]$, which would imply $\{v_j,v_k\} \in E$.  Since $v_k \in U^c$, $v_j \triangleright v_k$ and the ordering $v_1, \ldots, v_n$ could not have arisen as a refinement of $\trianglelefteq$. 

For the reverse direction, suppose a graph $G$ has a construction order $v_1, \ldots, v_n$ for some $U \subseteq V$.  Let $W \subseteq V$ and consider the highest index vertex $v_i$ in $W$.  Then $W \subseteq \{v_1, \ldots, v_i\}$, which means $N_{W}(v_i) = N^{<}(v_i) \cap W.$  Since $N^{<}(v_i) = \emptyset$ or $N^{<}(v_i) = U \cap \{v_1, \ldots, v_{i-1}\}$, it must be the case that $N_{W}(v_i) = \emptyset$ or $N_{W}(v_i) = (W \setminus v_i) \cap U$.  Therefore, $v_i$ is either isolated or $U$-dominating in $G[W]$, which means $G$ is special $2$-threshold. 
\end{proof}

We conclude this section with an extension of Proposition~\ref{prop:ferrersqt} that emerges as a straightforward corollary of Lemma~\ref{lem:nesting}.  While a threshold graph is a $U$-threshold graph where $U$ is its entire vertex set, a Ferrers graph is a connected $U$-threshold graph where $U$ is an independent set.

\begin{corollary}
    A connected $U$-threshold graph is Ferrers if and only if $U$ is independent.
\end{corollary}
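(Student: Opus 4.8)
My plan is to prove the biconditional one direction at a time, putting the real content into the ``if'' direction where Lemma~\ref{lem:nesting} applies, and deriving the ``only if'' direction from results already established.

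For the ``if'' direction, assume $G=(V,E)$ is connected and $U$-threshold with $U$ independent. The first step is to note that $U^{c}=V\setminus U$ is independent by Lemma~\ref{lem:nesting}(a); combined with the hypothesis that $U$ is independent, this forces every edge of $G$ to run between $U$ and $U^{c}$, so $G$ is bipartite with color classes $U$ and $U^{c}$. Writing $R=U^{c}$ and $C=U$, I would then use Lemma~\ref{lem:nesting}(b) and~(c), which say that the neighborhoods of the vertices of $R$ form a chain under inclusion and likewise for the $U^{c}$-neighborhoods of the vertices of $C$. Ordering $R$ and $C$ by decreasing neighborhood turns this nesting into exactly condition~(2) of the definition of a Ferrers graph: if $\{r_{k},c_{\ell}\}\in E$ then $c_{\ell}\in N(r_{k})\subseteq N(r_{i})$ and $r_{k}\in N(c_{\ell})\subseteq N(c_{j})$ for all $i\le k$ and $j\le\ell$, whence $\{r_{i},c_{j}\}\in E$. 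A slicker alternative is to skip the ordering entirely: $G$ is special $2$-threshold by Remark~\ref{rem:U-threshold}, hence has no induced $2K_{2}$ by Theorem~\ref{conj:forbidden}, so being connected and bipartite it is Ferrers by Theorem~\ref{thm:Ferrersforbidden}.

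Condition~(1) of the definition, the corner edges $\{r_{1},c_{m}\}$ and $\{r_{n},c_{1}\}$, is where connectivity enters: if the largest neighborhood $N(r_{1})$ missed some $c\in C$, then by the nesting no vertex of $R$ would be adjacent to $c$, isolating $c$ and contradicting connectivity, so $N(r_{1})=C$; the symmetric argument gives $N(c_{1})=R$, and both corner edges follow. For the ``only if'' direction, a connected Ferrers graph is bipartite and, by Proposition~\ref{prop:ferrersqt}, is $U$-threshold with $U$ equal to the color class $C$, which is independent; thus the connected $U$-threshold graphs with $U$ independent are precisely the connected Ferrers graphs, the content anticipated by the remark preceding the corollary.

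I expect the main obstacle to be organizational rather than deep. The delicate points are checking that the two neighborhood chains on $R$ and on $C$ assemble into a single consistent Ferrers diagram and extracting condition~(1) cleanly from connectivity; the nesting supplied by Lemma~\ref{lem:nesting}(b)--(c) makes the compatibility automatic, since a bipartite graph whose one-sided neighborhoods form a chain is already a staircase. If one prefers, routing the ``if'' direction through Theorems~\ref{conj:forbidden} and~\ref{thm:Ferrersforbidden} absorbs essentially all of this bookkeeping into the cited characterizations, leaving Lemma~\ref{lem:nesting}(a) as the only new ingredient needed to supply bipartiteness.
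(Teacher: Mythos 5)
Your proposal is correct and follows essentially the same route as the paper's proof: Lemma~\ref{lem:nesting}(a) gives bipartiteness, parts (b)--(c) give the nesting of neighborhoods on both sides (using independence of $U$ to identify $N(v)$ with $N_{U^c}(v)$ for $v \in U$), ordering by neighborhood containment yields the Ferrers structure, and Proposition~\ref{prop:ferrersqt} handles the converse direction, exactly as in the paper. You go beyond the paper in two worthwhile respects: you make explicit the connectivity argument establishing the corner-edge condition (1), which the paper compresses into ``one can check that $G$ is Ferrers,'' and your alternative route through Theorems~\ref{conj:forbidden} and~\ref{thm:Ferrersforbidden} (special $2$-threshold implies no induced $2K_2$, and a connected bipartite graph with no induced $2K_2$ is Ferrers) is also valid and shorter, at the cost of invoking the two forbidden-subgraph characterizations rather than only the elementary Lemma~\ref{lem:nesting}.
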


\begin{proof}
A Ferrers graph $G = (V,E)$ with $V = R \sqcup C$ is $U$-threshold with $U = C$ by Proposition~\ref{prop:ferrersqt}.  Therefore, it suffices to show that a connected $U$-threshold graph is Ferrers whenever $U$ is independent.
Let $G = (V,E)$ be a $U$-threshold graph such that $U$ is an independent set.  By Lemma~\ref{lem:nesting}(a), $U^c$ is also independent, so $G$ is bipartite with $V = U \sqcup U^c$.  By Lemma~\ref{lem:nesting}(b), the neighborhoods of the vertices in $U^c$ are nested.  Finally, since $U$ is independent, the neighborhoods of the vertices in $U$ are equal to their restrictions to $U^c$, which means they are nested by Lemma~\ref{lem:nesting}(c).  Ordering the vertices in each part, $U$ and $U^c$, by neighborhood containment, one can check that $G$ is Ferrers.
\end{proof}

\section{Weighted Results}\label{sec:weighted}
This is the final section of the article where we present weighted analogues to our main results for edge weighted graphs. An \textbf{edge weighting} of a graph $G = (V,E)$ is a function $\omega: E \rightarrow \mathbb{R}$ that assigns a numerical \textbf{weight} to each edge $e \in E$.  When the vertices of $e$ are specified, i.e., $e = \{v_i,v_j\}$ for some $v_i,v_j \in V$, we abbreviate $\omega(\{v_i,v_j\})$ to $\omega_{i,j}$.  For a given edge weighting $\omega$ of $G$, the weighted analogues of degree, Laplacian matrix, and spanning tree enumerator are defined as follows: \begin{itemize} \item The \textbf{weighted degree} of a vertex $v_i \in V$ is given by
$$\deg(v_i;\omega)=\sum_{v_j \in N(v_i)}\omega_{i,j} = \sum_{\{v_i,v_j\}\in E}\omega_{i,j}.$$
\item The \textbf{weighted Laplacian matrix} $L(G;\omega)$ is the $|V| \times |V|$ matrix whose entries are given by 
  $$
    L(G; \omega)(i,j) =
    \begin{cases}
      \deg(v_i; \omega) & \text{ if } i = j, \\
      - \omega_{i,j} & \text{ if } \{v_i,v_j\} \in E, 
      \\
      0 & \text {otherwise}.
    \end{cases}
  $$
\item The \textbf{weighted spanning tree enumerator} of $G$ is given by
$$\tau(G;\omega) = \sum_{T\in\mathcal{T}(G)}\prod_{e\in E(T)}\omega(e),$$ where $\mathcal{T}(G)$ denotes the set of all spanning trees in $G$.
\end{itemize}
It is straightforward to check that the unweighted analogues of these objects are recovered from the special case where all the weights are equal to $1$. 

Weighted spanning tree enumeration formulae for various classes of graphs are well-established in the literature.  For instance, the weighted version of Cayley's Formula (Theorem~\ref{thm:completeenum}) is known as the Cayley-Pr\"ufer Theorem~\cite{Moon}.

\begin{theorem}[Cayley-Pr\"ufer Theorem]
Let the edges of a complete graph on $n$ vertices be weighted by $\omega_{i,j} = x_ix_j$ for some indeterminates $x_1,\ldots,x_n$. Then $$\tau(K_n;\omega) = \left(\prod_{k=1}^n x_k \right) \left(\sum_{k=1}^n x_k \right)^{n-2}.$$
\end{theorem}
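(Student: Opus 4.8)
The plan is to mirror the (here in fact \emph{diagonalizing}) rank-one perturbation strategy used throughout the paper, applying a weighted analogue of Lemma~\ref{lem:klee-stamps}. The first step is to record such an analogue: for a graph $G$ on $n$ vertices with weighted Laplacian $L(G;\omega)$ and column vectors $\va = \langle a_i\rangle$ and $\vb = \langle b_i\rangle$ over $\mathbb{R}[x_1,\ldots,x_n]$, one has
$$\det\bigl(L(G;\omega) + \va\vb^T\bigr) = \Bigl(\sum_{i=1}^n a_i\Bigr)\Bigl(\sum_{i=1}^n b_i\Bigr)\,\tau(G;\omega).$$
This follows from the weighted Matrix-Tree theorem exactly as in the unweighted case, since the cofactor argument behind Lemma~\ref{lem:klee-stamps} only uses that the row and column sums of $L(G;\omega)$ vanish (which they do, as $\deg(v_i;\omega) = \sum_{j\neq i}\omega_{i,j}$), and is insensitive to the entries being formal polynomials rather than integers.

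With this in hand, I would compute the weighted Laplacian of $K_n$ under $\omega_{i,j} = x_ix_j$. Writing $s = \sum_{k=1}^n x_k$, the weighted degree of $v_i$ is $\deg(v_i;\omega) = x_i\sum_{j\neq i}x_j = x_i(s - x_i)$, so
$$L(K_n;\omega)(i,j) = \begin{cases} x_i(s - x_i) & \text{if } i = j,\\ -\,x_ix_j & \text{if } i \neq j.\end{cases}$$
The key move is to choose $\va = \vb = \langle x_1,\ldots,x_n\rangle^T$, so that $(\va\vb^T)(i,j) = x_ix_j$. Adding this to $L(K_n;\omega)$ cancels every off-diagonal entry and turns the $(i,i)$ entry into $x_i(s-x_i) + x_i^2 = s\,x_i$. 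Hence $L(K_n;\omega) + \va\vb^T$ is the diagonal matrix $\mathrm{diag}(sx_1,\ldots,sx_n)$, whose determinant is immediately $\prod_{i=1}^n (sx_i) = s^{\,n}\prod_{i=1}^n x_i$.

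Finally, since both $\va$ and $\vb$ have coordinate sum $\sum_{i=1}^n x_i = s$, the weighted identity above yields $s^{\,n}\prod_{i=1}^n x_i = s\cdot s\cdot\tau(K_n;\omega)$; working in the integral domain $\mathbb{R}[x_1,\ldots,x_n]$ we may cancel the common factor $s^2$ to obtain $\tau(K_n;\omega) = s^{\,n-2}\prod_{i=1}^n x_i$, which is exactly the claimed formula. (Specializing to $x_i = 1$ recovers Cayley's Formula through $L(K_n)+J = nI$, so this computation is the natural weighted refinement of that one.) The only ingredient that is not a one-line diagonalization is the weighted analogue of Lemma~\ref{lem:klee-stamps}, and I expect the sole point requiring care to be verifying that the weighted Matrix-Tree machinery transfers verbatim to the polynomial setting; once that is in place, the rest of the argument is immediate.
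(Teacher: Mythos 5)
Your proof is correct and uses precisely the machinery this paper endorses: the paper does not prove the Cayley--Pr\"ufer theorem itself (it cites it as known and notes that all the weighted formulae follow from Lemma~\ref{lem:klee-stamps-2}), and your choice $\va = \vb = \langle x_1,\ldots,x_n\rangle$, which turns $L(K_n;\omega)$ into $\mathrm{diag}(sx_1,\ldots,sx_n)$, is exactly the intended rank-one perturbation argument. Your one point of care---that Lemma~\ref{lem:klee-stamps-2} remains valid with polynomial entries---is legitimate and is in fact implicitly relied upon by the paper itself in the proofs of Theorems~\ref{thm:weightedquasiupdate} and~\ref{thm:weightednuclear}, where the perturbation vectors likewise have indeterminate entries.
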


The following weighted version of Theorem~\ref{thm:thresholdenum} for threshold graphs is a special case of a result by Martin and Reiner~\cite[Theorem~4, Equation (11)]{Martin-Reiner}.

\begin{theorem}\label{thm:thresholdweightedenum}
  Let $G = (V,E)$ be a threshold graph with $V = \{v_1,\dots,v_n\}$ and edge weighting $\omega_{i,j}=x_i x_j$ for some indeterminates $x_1,\dots,x_n.$ Then
  \[
   \tau(G;\omega) = \frac{\displaystyle \left(\prod_{k=1}^n x_k\right)
   \left(\prod_{v_i \in D}\left(x_i + \sum_{v_k \in N(v_i)}x_k\right)\right)\left(\prod_{v_j\in I} \sum_{v_k \in N(v_j)} x_k\right)}{\displaystyle \sum_{i=1}^n x_i}
  ,
  \]
  where $D, I \subseteq V$ are the subsets of dominating and isolated vertices in $G$, respectively.
\end{theorem}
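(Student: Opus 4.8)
The plan is to replay the proof of Theorem~\ref{thm:thresholdenum} in the weighted setting, substituting the weighted Laplacian $L(G;\omega)$ for $L(G)$ and a weighted analogue of Lemma~\ref{lem:klee-stamps} for the original. Specifically, I would first record the weighted version
$$\det\left(L(G;\omega)+\va\vb^T\right)=\left(\sum_{i=1}^n a_i\right)\left(\sum_{i=1}^n b_i\right)\tau(G;\omega),$$
valid for all $\va,\vb\in\mathbb{R}^n$. Its proof is verbatim the unweighted one, since the only structural inputs are that $L(G;\omega)$ has vanishing row and column sums and that its cofactors compute $\tau(G;\omega)$ via the (weighted) Matrix-Tree Theorem~\ref{kirchhoff}; both remain true because each diagonal entry $\deg(v_i;\omega)=\sum_{v_k\in N(v_i)}x_ix_k$ is offset exactly by the off-diagonal weights in its row.

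Next I would fix the construction order $v_1,\dots,v_n$ furnished by Theorem~\ref{thm:threshold-order}, let $D$ and $I$ denote the append-dominating and append-isolated vertices as in Theorem~\ref{thm:thresholdenum} (with $v_1$ in neither), and choose the weighted indicator vectors
$$a_i=\begin{cases}x_i & \text{if } v_i\in D,\\ 0 & \text{otherwise},\end{cases}\qquad b_i=x_i \text{ for all } i.$$
The claim is that $L(G;\omega)+\va\vb^T$ is upper triangular. For $1\le j<i\le n$ we have $(\va\vb^T)(i,j)=a_ib_j$, which equals $x_ix_j$ precisely when $v_i\in D$; in that case $v_i$ is adjacent to every earlier vertex, so $L(G;\omega)(i,j)=-\omega_{i,j}=-x_ix_j$ and the entries cancel. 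When $v_i\notin D$ we have $a_i=0$ and $N^<(v_i)=\emptyset$, so both contributions below the diagonal already vanish.

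Granting triangularity, the determinant is the product of the diagonal entries $L(G;\omega)(i,i)+a_ib_i$, which computes to $x_i\bigl(x_i+\sum_{v_k\in N(v_i)}x_k\bigr)$ for $v_i\in D$ and to $x_i\sum_{v_k\in N(v_i)}x_k$ otherwise. Factoring $\prod_{k=1}^n x_k$ out of every diagonal term and grouping the vertices into $D$ and its complement yields
$$\det\left(L(G;\omega)+\va\vb^T\right)=\left(\prod_{k=1}^n x_k\right)\left(\prod_{v_i\in D}\Bigl(x_i+\sum_{v_k\in N(v_i)}x_k\Bigr)\right)\left(\prod_{v_j\in I\cup\{v_1\}}\sum_{v_k\in N(v_j)}x_k\right).$$
Dividing by $\bigl(\sum_i a_i\bigr)\bigl(\sum_i b_i\bigr)=\bigl(\sum_{v_i\in D}x_i\bigr)\bigl(\sum_{i=1}^n x_i\bigr)$ via the weighted lemma, the stated formula falls out once I invoke the identity $N(v_1)=D$: this gives $\sum_{v_k\in N(v_1)}x_k=\sum_{v_i\in D}x_i$, so the $v_1$ factor in the numerator cancels the $\sum_{v_i\in D}x_i$ in the denominator, absorbing $v_1$ and leaving exactly $\prod_{v_j\in I}\sum_{v_k\in N(v_j)}x_k$.

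The bulk of this argument is a mechanical weighted transcription of the existing proof, so the two places warranting genuine care are the obstacles I would flag. The first is establishing the weighted Lemma~\ref{lem:klee-stamps}; while routine, it must be stated and justified since it is not recorded in the unweighted development. The second, and the more delicate, is the bookkeeping around the initial vertex: I would verify that $D$ and $I$ are the append-dominating and append-isolated classes of the construction order (so that $v_1$ lies in neither), confirm $N(v_1)=D$, and check that the resulting cancellation against $\sum_{v_i\in D}x_i$ is exact, since a misassignment of $v_1$ would leave a spurious $\sum_{v_k\in N(v_1)}x_k$ factor.
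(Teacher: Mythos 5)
Your proposal is correct and is essentially the paper's own argument: the paper proves Theorem~\ref{thm:weightednuclear} by exactly this computation (the perturbation $L(G;\omega)+\va\vb^T$ with $\va$ the weighted indicator of $D$ and $\vb$ the weighted indicator of $U$, made upper triangular by the construction order and evaluated with Lemma~\ref{lem:klee-stamps-2}), and then recovers Theorem~\ref{thm:thresholdweightedenum} as the specialization $U=V$, where $D_U=D$, $V\setminus D_U=\{v_1\}\cup I$, and $\sum_{v_i\in D}x_i=\sum_{v_k\in N(v_1)}x_k$ cancels the $v_1$ factor, exactly as in your final step. The only cosmetic difference is that you inline this specialization as a direct proof for threshold graphs instead of passing through the general special $2$-threshold statement.
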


Ehrenborg and van Willigenburg~\cite{E-VW} proved a weighted version of Theorem~\ref{thm:ferrersenum} for Ferrers graphs. Their formula makes use of the notion of a conjugate partition:  If $\lambda = (\lambda_1,\lambda_2,\dots,\lambda_m)$ is an integer partition, then its \textbf{conjugate partition} is $\lambda^\prime = (\lambda_1^\prime,\lambda_2^\prime,\dots,\lambda_n^\prime)$ where $n=\lambda_1$ and $\lambda_j^\prime$ counts the number of parts $\lambda_i$ with $\lambda_i \geq j$ for $1 \leq j \leq n$.  For example, the conjugate partition of $(3, 2, 2, 1)$ is $(4, 3, 1)$.  To visualize the relationship between a general partition $\lambda$ and its conjugate $\lambda^\prime$, observe that $\lambda_i$ counts the number of boxes in the $i$th row in the Ferrers diagram of $\lambda$ whereas $\lambda_j^\prime$ counts the number of boxes in the $j$th column.  Alternatively, the Ferrers diagram of $\lambda^\prime$ is obtained by reflecting the Ferrers diagram of $\lambda$ about the diagonal line through its upper leftmost box.

\begin{theorem}\label{thm:ferrersweightedenum}
  Let $G = (V,E)$ be a Ferrers graph with $V = R \sqcup C$, $|R| = m$, $|C| = n$, and edge weighting $\omega_{i,j}=x_i y_j$ for some indeterminates $x_1,\dots , x_m,y_1,\dots,y_n.$ Moreover, let  $\lambda = (\lambda_1,\lambda_2,\dots,\lambda_m)$  be the integer partition corresponding to the Ferrers diagram of $G$. Then
  \[
  \tau(G;\omega) = \left(\prod_{i=1}^m x_i\right)\cdot\left(\prod_{j=1}^n y_j\right)\cdot \left(\prod_{i=2}^m \left(\sum_{k=1}^{\lambda_i}y_k\right)\right) \cdot \left(\prod_{j=2}^n \left(\sum_{k=1}^{\lambda_j^\prime}x_k\right)\right)
  \]
  where $\lambda^\prime = (\lambda_1^\prime,\lambda_2^\prime,\dots,\lambda_n^\prime)$ is the conjugate partition of $\lambda$.
\end{theorem}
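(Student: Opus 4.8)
The plan is to adapt the triangular rank-one perturbation proof of Theorem~\ref{thm:ferrersenum} to the weighted setting. First I would record a weighted analogue of Lemma~\ref{lem:klee-stamps}: for any column vectors $\va = \langle a_i\rangle$ and $\vb = \langle b_i\rangle$ in $\mathbb{R}^{m+n}$,
$$\det\left(L(G;\omega) + \va\vb^T\right) = \left(\sum_i a_i\right)\left(\sum_i b_i\right)\tau(G;\omega).$$
This follows from the matrix determinant lemma, $\det(L + \va\vb^T) = \det L + \vb^T\,\mathrm{adj}(L)\,\va$, together with the (weighted) Matrix-Tree theorem: since $L(G;\omega)$ is symmetric with zero row sums it is singular, and all of its signed cofactors equal $\tau(G;\omega)$, so $\mathrm{adj}(L(G;\omega)) = \tau(G;\omega)\,J$ with $J$ the all-ones matrix, giving $\vb^T\,\mathrm{adj}(L)\,\va = \tau(G;\omega)\,(\sum_i a_i)(\sum_i b_i)$ exactly as in the unweighted case.

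Next I would order $V$ by the Ferrers traversal $v_1,\dots,v_{m+n}$ (as in Example~\ref{ex:Ferrers}) and take $\va,\vb$ to be the \emph{weighted} indicator vectors of $R$ and $C$: set $a_p = x_i$ if $v_p = r_i \in R$ and $a_p = 0$ otherwise, and $b_p = y_j$ if $v_p = c_j \in C$ and $b_p = 0$ otherwise. Then $(\va\vb^T)(p,q) = a_p b_q$ equals $x_i y_j$ precisely when $v_p = r_i\in R$ and $v_q = c_j \in C$. Below the diagonal, the Ferrers traversal guarantees, exactly as in the proof of Theorem~\ref{thm:ferrersenum}, that the only nonzero entries of $L(G;\omega)$ occur at positions $(p,q)$ with $v_p \in R$ and $v_q \in C$, where $\{v_p,v_q\}\in E$ and $L(G;\omega)(p,q) = -\omega_{i,j} = -x_i y_j$; these are cancelled precisely by $\va\vb^T$. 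Hence $L(G;\omega) + \va\vb^T$ is upper triangular, and since $R$ and $C$ are disjoint we have $a_p b_p = 0$ for all $p$, so the diagonal is unchanged and
$$\det\left(L(G;\omega) + \va\vb^T\right) = \prod_{i=1}^m \deg(r_i;\omega)\prod_{j=1}^n\deg(c_j;\omega).$$
As $\sum_p a_p = \sum_{i=1}^m x_i$ and $\sum_p b_p = \sum_{j=1}^n y_j$, the weighted lemma yields
$$\tau(G;\omega) = \frac{\prod_{i=1}^m \deg(r_i;\omega)\prod_{j=1}^n\deg(c_j;\omega)}{\left(\sum_{i=1}^m x_i\right)\left(\sum_{j=1}^n y_j\right)}.$$

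Finally I would evaluate the weighted degrees from the Ferrers diagram. Row $r_i$ is adjacent exactly to columns $c_1,\dots,c_{\lambda_i}$, so $\deg(r_i;\omega) = x_i\sum_{k=1}^{\lambda_i} y_k$; dually, column $c_j$ is adjacent exactly to rows $r_1,\dots,r_{\lambda_j'}$, so $\deg(c_j;\omega) = y_j\sum_{k=1}^{\lambda_j'} x_k$. Substituting these into the displayed quotient factors out $\prod_{i=1}^m x_i$ and $\prod_{j=1}^n y_j$, leaving the numerator $\prod_{i=1}^m(\sum_{k=1}^{\lambda_i}y_k)\prod_{j=1}^n(\sum_{k=1}^{\lambda_j'}x_k)$. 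The key cancellation comes from the two extreme factors: since $\lambda_1 = n$ the $i=1$ factor equals $\sum_{k=1}^n y_k$, cancelling $\sum_j y_j$ in the denominator, and since $\lambda_1' = m$ the $j=1$ factor equals $\sum_{k=1}^m x_k$, cancelling $\sum_i x_i$. What remains is exactly the claimed product over $i\ge 2$ and $j\ge 2$. I expect the main point requiring care to be the bookkeeping in this final step—correctly reading off $N(r_i)$ and $N(c_j)$ from the diagram and recognizing that the $i=1$ and $j=1$ degree factors are precisely the two denominator sums—rather than the triangularity argument, which transfers verbatim from the unweighted proof.
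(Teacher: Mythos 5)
Your proposal is correct and follows essentially the paper's approach: the same key lemma (the weighted perturbation identity, Lemma~\ref{lem:klee-stamps-2}, which the paper cites rather than reproves) applied to the weighted indicator vectors of $R$ and $C$ in the Ferrers traversal order, followed by the same reading of the weighted degrees off the diagram and cancellation of the two extreme factors via $\lambda_1 = n$ and $\lambda_1^\prime = m$. The only difference is organizational: the paper obtains the formula by specializing its general weighted $U$-threshold result (Theorem~\ref{thm:weightednuclear}, with $U = C$, $D = R$, and $D_U = \emptyset$) rather than running the triangularity argument directly on the Ferrers graph as you do, but the underlying computation is identical.
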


Klee and Stamps~\cite{klee_stamps_weighted} presented straightforward proofs for the weighted versions of all the formulae in Section~\ref{sec:enum} using the following weighted analogue to Lemma~\ref{lem:klee-stamps}.
  
\begin{lemma}\label{lem:klee-stamps-2}
Let $G$ be a graph on $n$ vertices with edge weighting $\omega$ and weighted Laplacian matrix $L = L(G;\omega)$, and let $\va= \langle a_i \rangle_{i\in [n]}$ and $\vb= \langle b_i \rangle_{i\in [n]}$ be column vectors in $\mathbb{R}^n$. Then
$$\det\left(L+\va\vb^T\right) = \left(\sum_{i=1}^n a_i\right) \cdot \left(\sum_{i=1}^n b_i\right)\cdot \tau(G; \omega).$$
\end{lemma}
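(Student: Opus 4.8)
The plan is to prove Lemma~\ref{lem:klee-stamps-2} by the same template that underlies Lemma~\ref{lem:klee-stamps}, now resting on the weighted Matrix-Tree theorem in place of Theorem~\ref{kirchhoff}. First I would record the two structural facts about $L = L(G;\omega)$ that drive the computation: each row and column of $L$ sums to zero (since $\deg(v_i;\omega) = \sum_{\{v_i,v_j\}\in E}\omega_{i,j}$ exactly cancels the off-diagonal entries $-\omega_{i,j}$ in row $i$), so $L$ is singular and $\det(L) = 0$; and the weighted Matrix-Tree theorem asserts that $(-1)^{i+j}\det(L_{i,j}) = \tau(G;\omega)$ for every pair $i,j$, where $L_{i,j}$ denotes $L$ with row $i$ and column $j$ removed.

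With these in hand, the determinant is computed by multilinearity in the columns. The $j$th column of $\va\vb^T$ is $b_j\va$, so writing each column of $L + \va\vb^T$ as $L_{\cdot\,j} + b_j\va$ and expanding, one obtains a sum over subsets $S \subseteq [n]$ of determinants in which the columns indexed by $S$ are taken from $\va\vb^T$ and the remaining columns from $L$. Because $\va\vb^T$ has rank one, any term with $|S| \geq 2$ has two columns proportional to $\va$ and hence vanishes; the term with $S = \emptyset$ is $\det(L) = 0$. Thus only the $n$ terms with $|S| = 1$ survive. In the term with $S = \{j\}$, I would factor out $b_j$ and expand the resulting determinant along its $j$th column to get
\[
b_j \sum_{i=1}^n a_i\,(-1)^{i+j}\det(L_{i,j}) = b_j \left(\sum_{i=1}^n a_i\right)\tau(G;\omega),
\]
using the weighted Matrix-Tree theorem. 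Summing over $j$ then yields $\left(\sum_i a_i\right)\left(\sum_j b_j\right)\tau(G;\omega)$, as desired.

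A slicker alternative packages the same content through the matrix determinant lemma: since $\det(L + \va\vb^T) = \det(L) + \vb^T\operatorname{adj}(L)\,\va$ and the weighted Matrix-Tree theorem says every entry of $\operatorname{adj}(L)$ equals $\tau(G;\omega)$, i.e.\ $\operatorname{adj}(L) = \tau(G;\omega)\,J$ for the all-ones matrix $J$, one obtains $\det(L + \va\vb^T) = \tau(G;\omega)\,\vb^T J\va = \left(\sum_i a_i\right)\left(\sum_j b_j\right)\tau(G;\omega)$ directly. Either way, the only real obstacle is that the paper states only the unweighted Matrix-Tree theorem (Theorem~\ref{kirchhoff}), whereas the entire argument hinges on its weighted refinement that all signed cofactors coincide with $\tau(G;\omega)$. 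I would establish this weighted version by the standard Cauchy--Binet computation applied to a weighted oriented incidence matrix, after which the remainder of the proof is the routine multilinearity bookkeeping sketched above.
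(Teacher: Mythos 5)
Your proof is correct, but there is no in-paper argument to compare it against: the paper never proves Lemma~\ref{lem:klee-stamps-2} (nor its unweighted counterpart, Lemma~\ref{lem:klee-stamps}); both are imported verbatim with citations to Klee and Stamps, and the present authors only apply them. Judged as a self-contained proof, your proposal works. The multilinearity expansion is sound: the $S=\emptyset$ term vanishes because the rows of $L(G;\omega)$ sum to zero, all terms with $|S|\geq 2$ vanish by rank-one-ness, and each singleton term Laplace-expands into signed cofactors of $L$, which the weighted Matrix-Tree theorem converts into $b_j\left(\sum_i a_i\right)\tau(G;\omega)$. Your second route, $\det(L+\va\vb^T)=\det(L)+\vb^T\operatorname{adj}(L)\,\va$ together with $\operatorname{adj}(L)=\tau(G;\omega)J$, is equally valid and is in fact essentially the argument Klee and Stamps give in the cited source, so in that sense your ``slicker alternative'' is the closest thing to ``the paper's proof.'' You are also right to flag the one real dependency: the paper states only the unweighted Theorem~\ref{kirchhoff}, and the weighted refinement (all signed cofactors equal $\tau(G;\omega)$) must be supplied. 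Two details worth tightening there: (i) carry out Cauchy--Binet on the factorization $L=N\Omega N^T$, where $N$ is the oriented incidence matrix and $\Omega$ the diagonal matrix of edge weights, using the three-factor form $\det(ADB)=\sum_S \det(A_{\cdot S})\left(\prod_{s\in S}d_s\right)\det(B_{S\cdot})$ rather than absorbing square roots of the weights into the incidence matrix, since the weights may be negative or indeterminates; and (ii) Cauchy--Binet by itself yields only the \emph{diagonal} cofactors, so to get all cofactors equal you still need the zero row- and column-sum argument: if $\operatorname{rank}(L)=n-1$ then the columns of $\operatorname{adj}(L)$ lie in $\ker L=\operatorname{span}(\mathbf{1})$ and its rows in $\ker L^T=\operatorname{span}(\mathbf{1})$, forcing $\operatorname{adj}(L)=cJ$, while if $\operatorname{rank}(L)<n-1$ both $\operatorname{adj}(L)$ and $\tau(G;\omega)$ vanish. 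With those two sentences added, either of your arguments is a complete and correct proof.
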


Next, we prove a weighted version of Theorem~\ref{thm:quasiupdate} with which we can apply Lemma~\ref{lem:klee-stamps-2} to prove a weighted spanning tree enumeration formula for special $2$-threshold graphs that generalizes Theorems~\ref{thm:thresholdweightedenum} and \ref{thm:ferrersweightedenum} in the same way Theorem~\ref{thm:quasienum} generalizes Theorems~\ref{thm:thresholdenum} and \ref{thm:ferrersenum}.  

\begin{theorem}\label{thm:weightedquasiupdate}
  Let $G = (V,E)$ be a graph on $n$ vertices whose edges are weighted by $\omega_{i,j} = x_i x_j$ for some indeterminates $x_1,\dots,x_n.$ There exists an upper-triangular rank-one perturbation of the weighted Laplacian matrix of $G$ if and only if $G$ is special $2$-threshold.
\end{theorem}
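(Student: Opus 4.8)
The plan is to reuse the structure of the proof of Theorem~\ref{thm:quasiupdate}, modifying the rank-one perturbation vectors so that they carry the indeterminate weights. Throughout, I would treat the weighted Laplacian $L = L(G;\omega)$ and the perturbation as matrices over the field of rational functions $\mathbb{R}(x_1,\dots,x_n)$, so that ``rank one'' is understood over this field and every nonzero entry is a unit in an integral domain. The key observation driving the construction is that the below-diagonal entries of $L$ are $-\omega_{i,j} = -x_ix_j$ on edges and $0$ otherwise, and that $\omega_{i,j}$ factors as $x_i\cdot x_j$; this factorization is exactly what makes a single product-form correction suffice.

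For the reverse direction, I would assume $G$ is special $2$-threshold, invoke Theorem~\ref{thm:order} to obtain a construction order $v_1,\dots,v_n$ for some $U\subseteq V$, and let $D$ be the set of $U$-dominating vertices. Then I would take
$$a_i = \begin{cases} x_i & \text{if } v_i\in D,\\ 0 & \text{else},\end{cases}\qquad b_j = \begin{cases} x_j & \text{if } v_j\in U,\\ 0 & \text{else}.\end{cases}$$
For $1\le j<i\le n$ the entry $(\va\vb^T)(i,j) = x_ix_j$ occurs precisely when $v_i\in D$ and $v_j\in U$, which by the construction order is precisely when $\{v_i,v_j\}\in E$, i.e.\ when $L(i,j) = -x_ix_j$; hence the below-diagonal entries of $L+\va\vb^T$ all vanish and the matrix is upper triangular.

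For the forward direction, I would start from an upper-triangular matrix $L+M$ with $M$ rank one, factor $M = \va\vb^T$ over $\mathbb{R}(x_1,\dots,x_n)$, and set $D = \{v_i : a_i\ne 0\}$ and $U = \{v_j : b_j\ne 0\}$. Since the coefficient field is an integral domain, $M(i,j)=a_ib_j\ne 0$ if and only if $v_i\in D$ and $v_j\in U$; and because $L+M$ is upper triangular, $M(i,j)=-L(i,j)$ for $j<i$, so $M(i,j)\ne 0$ if and only if $\{v_i,v_j\}\in E$. Combining these for $j<i$ gives $\{v_i,v_j\}\in E$ exactly when $v_i\in D$ and $v_j\in U$, whence $N^<(v_i)=\emptyset$ when $v_i\notin D$ and $N^<(v_i)=U\cap\{v_1,\dots,v_{i-1}\}$ when $v_i\in D$. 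Thus $v_1,\dots,v_n$ is a construction order for $U$, and $G$ is special $2$-threshold by Theorem~\ref{thm:order}.

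The arguments are routine once the viewpoint is fixed, so the only real obstacle is ensuring that the indeterminate weights cannot create ``accidental'' cancellations that would corrupt the correspondence between the support of $M$ and the edge set of $G$. This is exactly where passing to the function field $\mathbb{R}(x_1,\dots,x_n)$ instead of working over $\mathbb{R}$ pays off: in an integral domain a product of nonzero elements is nonzero, so the below-diagonal support of $\va\vb^T$ is forced to be the combinatorial rectangle $D\times U$, and no polynomial coincidence among the $x_i$ can disturb the if-and-only-if statements above. Unlike the unweighted proof, one does not reduce to $\{0,1\}$ vectors; the entries $x_i$ play that role, and the product form $\omega_{i,j}=x_ix_j$ is precisely what guarantees that one rank-one correction is enough.
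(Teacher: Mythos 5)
Your proposal is correct and follows essentially the same route as the paper: the same vectors $a_i = x_i$ for $v_i \in D$, $b_j = x_j$ for $v_j \in U$ in the reverse direction, and the same support-matching argument combined with Theorem~\ref{thm:order} in the forward direction. The only difference is cosmetic but in your favor: where the paper normalizes ``without loss of generality'' to $a_i, b_i \in \{0, x_i\}$, you sidestep that step entirely by factoring $M = \va\vb^T$ over $\mathbb{R}(x_1,\dots,x_n)$ and arguing via supports in an integral domain, which makes the below-diagonal support of $M$ a combinatorial rectangle without any rescaling claim.
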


\begin{proof}
  Starting with the reverse direction, let $G = (V, E)$ be a special $2$-threshold graph on $n$ vertices. By Theorem~\ref{thm:order}, there exists a construction order $v_1,\dots,v_n$ for some $U \subseteq V$.  Let $L = L(G,\omega)$ be the weighted Laplacian matrix of $G$ with respect to that order, and let $\va , \vb \in \mathbb{R}^n$ be defined by
    \[a_i = \begin{cases}x_i & \text{if }v_i\in D,\\ 0 &\text{else},\end{cases}\qquad b_i = \begin{cases}x_i & \text{if }v_i\in U,\\ 0 &\text{else}.\end{cases}\]
  Then $L + \va\vb^T$ is upper triangular since for $1 \leq j < i \leq n$, $(\va\vb^T)(i, j) = x_i x_j$ if and only if $v_i \in D$ and $v_j \in U$ if and only if $\{v_i, v_j\} \in E$ if and only if $L(i, j) = -x_ix_j$.

  For the forward direction, suppose $G = (V, E)$ is a graph with $V = \{v_1, \ldots, v_n\}$ and edge weights $\omega_{i,j} = x_ix_j$ for some indeterminates $x_1, \ldots, x_n$, such that $L = L(G, \omega)$ admits an upper triangular rank-one perturbation, that is, there exists a rank-one matrix $M \in R^{n \times n}$ such that $L + M$ is upper-triangular. Since $M$ is rank-one, it can be factored into an outer product $M = \va\vb^T$ of two column vectors $\va = \langle a_i \rangle$ and $\vb= \langle b_i \rangle$ in $\mathbb{R}^n$. Moreover, since $L + \va\vb^T$ is upper triangular and $L(i,j) \in \{-x_ix_j, 0\}$ for $1 \leq j < i \leq n$, it follows that $M(i,j) \in \{x_ix_j, 0\}$ for $1 \leq j < i \leq n$, and we may assume without loss of generality that $M \in \{0, x_ix_j\}^{n \times n}$ and $a_i, b_i \in \{0, x_i\}$ for $1 \leq i \leq n$. Let $D = \{v_i \in V \mid a_i = x_i\}$ and $U = \{v_i \in V \mid b_i = x_i \}$. If $v_i \in V \setminus D$, then $a_i = 0$, which means $L(i,j) = M(i, j) = 0$ for $1 \leq j < i \leq n$, hence $N^<(v_i) = \emptyset.$ If $v_i \in D$, on the other hand, then $a_i = x_i$, which means
  \[
    M(i,j) = \begin{cases} 
      x_ix_j & \text{if } v_j \in U, \\ 
      0 & \text{else},
    \end{cases} \quad 
    \Longleftrightarrow 
    \quad L(G)(i,j) = \begin{cases} 
      -x_ix_j & \text{if } v_j \in U, \\
      0 & \text{else},
    \end{cases}
  \] 
  hence $N^<(v_i) = U \cap \{v_1, \ldots, v_{i-1}\}.$  Therefore, $v_1, \ldots, v_n$ is a construction order with respect to $U$, and $G$ is special $2$-threshold by Theorem~\ref{thm:order}.
\end{proof}

With this, we can prove the following weighted version of Theorem~\ref{thm:quasienum}.

\begin{theorem}\label{thm:weightednuclear}
  Let $G = (V,E)$ be a $U$-threshold graph with construction order $v_1,\dots,v_n$ and edge weighting $\omega_{i,j} = x_i x_j$ for some indeterminates $x_1, \dots , x_n$. Then 
  $$\tau(G,\omega) = 
  \frac{\displaystyle \left(\prod_{v_i \in V} x_i \right)\cdot \left(\prod_{v_i \in D_U} \left(x_i + \sum_{v_j \in N(v_i)} x_j\right) \right)\cdot \left(\prod_{v_i \in V \setminus D_U} \sum_{v_j \in N(v_i)} x_j\right)} 
  {\displaystyle \left(\sum_{v_i \in D}x_i\right) \cdot \left(\sum_{v_i \in U} x_i\right)},$$
  where $D\subseteq V$ is the set of $U$-dominating vertices in the construction of $G,$ and $D_U = D\cap U$.
\end{theorem}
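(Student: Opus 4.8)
The plan is to follow the proof of Theorem~\ref{thm:quasienum} essentially verbatim, with the unweighted Laplacian and Lemma~\ref{lem:klee-stamps} replaced by their weighted analogues. First I would apply Theorem~\ref{thm:weightedquasiupdate}: fixing a construction order $v_1,\dots,v_n$ for $U$, the weighted rank-one perturbation $L(G;\omega)+\va\vb^{T}$ with
\[
a_i=\begin{cases}x_i & \text{if }v_i\in D,\\ 0 &\text{else},\end{cases}\qquad
b_i=\begin{cases}x_i & \text{if }v_i\in U,\\ 0 &\text{else},\end{cases}
\]
is upper triangular, so its determinant is the product of its diagonal entries, while $\sum_i a_i=\sum_{v_i\in D}x_i$ and $\sum_i b_i=\sum_{v_i\in U}x_i$.

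The only real computation is identifying the diagonal. The $(i,i)$-entry of $L(G;\omega)$ is the weighted degree $\deg(v_i;\omega)=x_i\sum_{v_j\in N(v_i)}x_j$, and the $(i,i)$-entry of $\va\vb^{T}$ is $a_ib_i$, which equals $x_i^2$ exactly when $v_i\in D_U=D\cap U$ and vanishes otherwise. Hence
\[
\bigl(L(G;\omega)+\va\vb^{T}\bigr)(i,i)=
\begin{cases}
x_i\Bigl(x_i+\sum_{v_j\in N(v_i)}x_j\Bigr) & \text{if } v_i\in D_U,\\[4pt]
x_i\sum_{v_j\in N(v_i)}x_j & \text{else.}
\end{cases}
\]
Taking the product over all $i$ and factoring a single $x_i$ out of each diagonal entry gives
\[
\det\bigl(L(G;\omega)+\va\vb^{T}\bigr)=\left(\prod_{v_i\in V}x_i\right)\left(\prod_{v_i\in D_U}\Bigl(x_i+\sum_{v_j\in N(v_i)}x_j\Bigr)\right)\left(\prod_{v_i\in V\setminus D_U}\sum_{v_j\in N(v_i)}x_j\right).
\]
Substituting this determinant together with the two index-sums into Lemma~\ref{lem:klee-stamps-2} and solving for $\tau(G;\omega)$ yields the claimed formula after dividing by $\bigl(\sum_{v_i\in D}x_i\bigr)\bigl(\sum_{v_i\in U}x_i\bigr)$.

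I expect no genuine obstacle: this is a direct weighted transcription of the argument for Theorem~\ref{thm:quasienum}, and the single subtle point is the quadratic contribution $a_ib_i=x_i^2$ on the diagonal of $\va\vb^{T}$ for vertices of $D_U$, which is exactly what produces the extra summand $x_i$ inside the first product. It is also worth remarking that the formula correctly returns $\tau(G;\omega)=0$ whenever $G$ has a vertex that is isolated in $G$ (as opposed to merely isolated in the construction order), since such a vertex contributes an empty neighborhood sum, hence a vanishing diagonal factor, consistent with $G$ being disconnected.
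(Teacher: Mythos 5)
Your proposal is correct and follows essentially the same route as the paper's own proof: the same indicator-style vectors $\va$ and $\vb$ scaled by the $x_i$, the same identification of the diagonal entries (with the key observation that $a_ib_i = x_i^2$ precisely when $v_i \in D_U$), and the same final application of Lemma~\ref{lem:klee-stamps-2}. The closing remark about isolated vertices is a harmless sanity check not present in the paper, but the core argument is a faithful match.
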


\begin{proof}
  From the proof of Theorem~\ref{thm:weightedquasiupdate}, the rank-one perturbation $L + \va\vb^T$ where 
  \[a_i = \begin{cases}x_i & \text{if }v_i\in D,\\ 0 &\text{else},\end{cases}\qquad b_i = \begin{cases}x_i & \text{if }v_i\in U,\\ 0 &\text{else},\end{cases}\] is upper triangular. Note that $\va\vb^T(i,i) = 0$ for $v_i \notin D_U$ and $\va\vb^T(i,i) = x_i ^2$ for $v_i \in D_U.$ Thus, the diagonal entries of $L + \va\vb^T$ are given by
  \[\left(L+\va\vb^{T}\right)\left(i,i\right)=
    \begin{cases}
  \sum_{v_j\in N(v_i)}x_i x_j + x_i ^2 & \text{if } v_i \in D_U,\\
  \sum_{v_j\in N(v_i)}x_i x_j & \text{else.}
    \end{cases}
  \]
  Since $L+\va\vb^{T}$ is upper triangular,
  \begin{equation*} \begin{split}
  \det\left(L+\va\vb^{T}\right) &= \prod_{v_i\in D_U} \left(\sum_{v_j\in N(v_i)}x_i x_j + x_i ^2\right) \cdot \prod_{v_i\in V \setminus D_U}\left(\sum_{v_j\in N(v_i)}x_i x_j\right)\\
  &=\left(\prod_{v_i \in V} x_i \right)\cdot \left(\prod_{v_i \in D_U} \left(x_i + \sum_{v_j \in N(v_i)} x_j\right) \right)\cdot \left(\prod_{v_i \in V \setminus D_U} \sum_{v_j \in N(v_i)} x_j\right)
  \end{split}\end{equation*}
  By construction, the entries of $\va$ and $\vb$ sum up to
  $$\sum_{i = 1}^{n} a_i = \sum_{v_i \in D}x_i\quad\text{and}\quad \sum_{i = 1}^{n} b_i = \sum_{v_i \in U}x_i.$$
  The desired expression for $\tau(G,\omega)$ follows directly from Lemma~\ref{lem:klee-stamps-2}.
\end{proof}
\newpage
To see how Theorem~\ref{thm:weightednuclear} provides a common generalization of Theorem~\ref{thm:thresholdweightedenum} and Theorem~\ref{thm:ferrersweightedenum}, observe the following: 
\smallskip
\begin{itemize}
    \item If $G = (V,E)$ is a threshold graph on $n$ vertices, then it is $U$-threshold with $U = V$ by Remark~\ref{remark:threshold-special} and $D = N(v_1)$.  Substituting $D_U = D$, $V \setminus D_U = \{v_1\} \cup I$,  and $$\sum\limits_{v_i \in D} x_i = \deg(v_1,\omega)/x_1 = \sum\limits_{v_j \in N(v_1)} x_j$$ into the formula in Theorem~\ref{thm:weightednuclear} -- and simplifying appropriately -- yields the formula in Theorem~\ref{thm:thresholdweightedenum}.  
    \item If $G = (V,E)$ is a Ferrers graph with $V = R \sqcup C$, then it is $U$-threshold with $U = C$ by Proposition~\ref{prop:ferrersqt}, $D = R$ since every vertex in $R$ is adjacent to $v_1 = c_1$, and $D_U = U \cap D = R \cap C = \emptyset$.  Since $N(r_i) = \{c_j \ | \ 1 \leq j \leq \lambda_i\}$ and $N(c_i) = \{r_j \ | \ 1 \leq j \leq \lambda^\prime_i\}$, it follows that 
    \[
        \sum_{c_j \in N(r_i)} y_j = \sum_{j=1}^{\lambda_i} y_j \quad \text{and}
        \sum_{r_j \in N(c_i)} x_j = \sum_{j=1}^{\lambda'_i} x_j
    \]
    for every $r_i \in R$ and $c_i \in C$. Moreover, $$\sum_{c_i \in C} y_i = \sum_{j = 1}^{m} y_j \quad \text{and} \quad \sum_{r_i \in R} x_i = \sum_{j = 1}^{n} x_j$$ since $C = N(r_1)$, $R = N(c_1)$, $\lambda_1 = m$, and $\lambda^\prime_1 = n$.  Substituting $D = R$, $U = C$, and the above identities into the formula in Theorem~\ref{thm:weightednuclear} -- and simplifying appropriately -- yields the formula in Theorem~\ref{thm:ferrersweightedenum}.
\end{itemize}

\section*{Acknowledgment}

We are grateful to Steven Klee for reading a preliminary draft of this article and offering several valuable suggestions for improving its overall quality and clarity.

\section*{Appendix}\label{sec:appendix}

Here we list out the sets of all spanning trees for the graphs in Figure~\ref{fig:graphex}(b)-(f). 

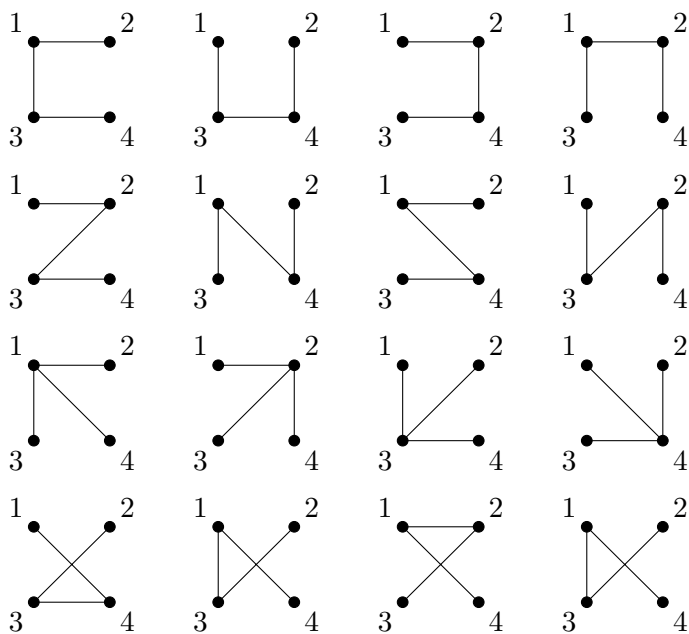
\begin{figure}[H]
    \centering
    \begin{tikzpicture}
    \filldraw (0,0) circle (2pt);
    \node [below left] at (0,0) {$3$};
    \filldraw (0,1) circle (2pt);
    \node [above left] at (0,1) {$1$};
    \filldraw (1,1) circle (2pt);
    \node [above right] at (1,1) {$2$};
    \filldraw (1,0) circle (2pt);
    \node [below right] at (1,0) {$4$};
    \draw (1,1)--(0,1)--(0,0)--(1,0);
    \end{tikzpicture}
    \quad
    \begin{tikzpicture}
    \filldraw (0,0) circle (2pt);
    \node [below left] at (0,0) {$3$};
    \filldraw (0,1) circle (2pt);
    \node [above left] at (0,1) {$1$};
    \filldraw (1,1) circle (2pt);
    \node [above right] at (1,1) {$2$};
    \filldraw (1,0) circle (2pt);
    \node [below right] at (1,0) {$4$};
    \draw (0,1)--(0,0)--(1,0)--(1,1);
    \end{tikzpicture}
    \quad 
    \begin{tikzpicture}
    \filldraw (0,0) circle (2pt);
    \node [below left] at (0,0) {$3$};
    \filldraw (0,1) circle (2pt);
    \node [above left] at (0,1) {$1$};
    \filldraw (1,1) circle (2pt);
    \node [above right] at (1,1) {$2$};
    \filldraw (1,0) circle (2pt);
    \node [below right] at (1,0) {$4$};
    \draw (0,0)--(1,0)--(1,1)--(0,1);
    \end{tikzpicture}
    \quad 
    \begin{tikzpicture}
    \filldraw (0,0) circle (2pt);
    \node [below left] at (0,0) {$3$};
    \filldraw (0,1) circle (2pt);
    \node [above left] at (0,1) {$1$};
    \filldraw (1,1) circle (2pt);
    \node [above right] at (1,1) {$2$};
    \filldraw (1,0) circle (2pt);
    \node [below right] at (1,0) {$4$};
    \draw (1,0)--(1,1)--(0,1)--(0,0);
    \end{tikzpicture}

    \smallskip

    \begin{tikzpicture}
    \filldraw (0,0) circle (2pt);
    \node [below left] at (0,0) {$3$};
    \filldraw (0,1) circle (2pt);
    \node [above left] at (0,1) {$1$};
    \filldraw (1,1) circle (2pt);
    \node [above right] at (1,1) {$2$};
    \filldraw (1,0) circle (2pt);
    \node [below right] at (1,0) {$4$};
    \draw (0,1)--(1,1)--(0,0)--(1,0);
    \end{tikzpicture}
        \quad 
    \begin{tikzpicture}
    \filldraw (0,0) circle (2pt);
    \node [below left] at (0,0) {$3$};
    \filldraw (0,1) circle (2pt);
    \node [above left] at (0,1) {$1$};
    \filldraw (1,1) circle (2pt);
    \node [above right] at (1,1) {$2$};
    \filldraw (1,0) circle (2pt);
    \node [below right] at (1,0) {$4$};
    \draw (0,0)--(0,1)--(1,0)--(1,1);
    \end{tikzpicture}
        \quad 
    \begin{tikzpicture}
    \filldraw (0,0) circle (2pt);
    \node [below left] at (0,0) {$3$};
    \filldraw (0,1) circle (2pt);
    \node [above left] at (0,1) {$1$};
    \filldraw (1,1) circle (2pt);
    \node [above right] at (1,1) {$2$};
    \filldraw (1,0) circle (2pt);
    \node [below right] at (1,0) {$4$};
    \draw (1,1)--(0,1)--(1,0)--(0,0);
    \end{tikzpicture}
        \quad 
    \begin{tikzpicture}
    \filldraw (0,0) circle (2pt);
    \node [below left] at (0,0) {$3$};
    \filldraw (0,1) circle (2pt);
    \node [above left] at (0,1) {$1$};
    \filldraw (1,1) circle (2pt);
    \node [above right] at (1,1) {$2$};
    \filldraw (1,0) circle (2pt);
    \node [below right] at (1,0) {$4$};
    \draw (0,1)--(0,0)--(1,1)--(1,0);
    \end{tikzpicture}
    
    \smallskip
    
    \begin{tikzpicture}
    \filldraw (0,0) circle (2pt);
    \node [below left] at (0,0) {$3$};
    \filldraw (0,1) circle (2pt);
    \node [above left] at (0,1) {$1$};
    \filldraw (1,1) circle (2pt);
    \node [above right] at (1,1) {$2$};
    \filldraw (1,0) circle (2pt);
    \node [below right] at (1,0) {$4$};
    \draw (1,1)--(0,1)--(0,0);
    \draw (0,1)--(1,0);
    \end{tikzpicture}
        \quad 
    \begin{tikzpicture}
    \filldraw (0,0) circle (2pt);
    \node [below left] at (0,0) {$3$};
    \filldraw (0,1) circle (2pt);
    \node [above left] at (0,1) {$1$};
    \filldraw (1,1) circle (2pt);
    \node [above right] at (1,1) {$2$};
    \filldraw (1,0) circle (2pt);
    \node [below right] at (1,0) {$4$};
    \draw (0,1)--(1,1)--(1,0);
    \draw (1,1)--(0,0);
    \end{tikzpicture}
        \quad 
    \begin{tikzpicture}
    \filldraw (0,0) circle (2pt);
    \node [below left] at (0,0) {$3$};
    \filldraw (0,1) circle (2pt);
    \node [above left] at (0,1) {$1$};
    \filldraw (1,1) circle (2pt);
    \node [above right] at (1,1) {$2$};
    \filldraw (1,0) circle (2pt);
    \node [below right] at (1,0) {$4$};
    \draw (0,1)--(0,0)--(1,0);
    \draw (0,0)--(1,1);
    \end{tikzpicture}
        \quad 
    \begin{tikzpicture}
    \filldraw (0,0) circle (2pt);
    \node [below left] at (0,0) {$3$};
    \filldraw (0,1) circle (2pt);
    \node [above left] at (0,1) {$1$};
    \filldraw (1,1) circle (2pt);
    \node [above right] at (1,1) {$2$};
    \filldraw (1,0) circle (2pt);
    \node [below right] at (1,0) {$4$};
    \draw (1,1)--(1,0)--(0,0);
    \draw (1,0)--(0,1);
    \end{tikzpicture}

    \smallskip

    \begin{tikzpicture}
    \filldraw (0,0) circle (2pt);
    \node [below left] at (0,0) {$3$};
    \filldraw (0,1) circle (2pt);
    \node [above left] at (0,1) {$1$};
    \filldraw (1,1) circle (2pt);
    \node [above right] at (1,1) {$2$};
    \filldraw (1,0) circle (2pt);
    \node [below right] at (1,0) {$4$};
    \draw (0,1)--(1,0)--(0,0)--(1,1);
    \end{tikzpicture}
        \quad 
    \begin{tikzpicture}
    \filldraw (0,0) circle (2pt);
    \node [below left] at (0,0) {$3$};
    \filldraw (0,1) circle (2pt);
    \node [above left] at (0,1) {$1$};
    \filldraw (1,1) circle (2pt);
    \node [above right] at (1,1) {$2$};
    \filldraw (1,0) circle (2pt);
    \node [below right] at (1,0) {$4$};
    \draw (1,1)--(0,0)--(0,1)--(1,0);
    \end{tikzpicture}
        \quad 
    \begin{tikzpicture}
    \filldraw (0,0) circle (2pt);
    \node [below left] at (0,0) {$3$};
    \filldraw (0,1) circle (2pt);
    \node [above left] at (0,1) {$1$};
    \filldraw (1,1) circle (2pt);
    \node [above right] at (1,1) {$2$};
    \filldraw (1,0) circle (2pt);
    \node [below right] at (1,0) {$4$};
    \draw (0,0)--(1,1)--(0,1)--(1,0);
    \end{tikzpicture}
        \quad 
    \begin{tikzpicture}
    \filldraw (0,0) circle (2pt);
    \node [below left] at (0,0) {$3$};
    \filldraw (0,1) circle (2pt);
    \node [above left] at (0,1) {$1$};
    \filldraw (1,1) circle (2pt);
    \node [above right] at (1,1) {$2$};
    \filldraw (1,0) circle (2pt);
    \node [below right] at (1,0) {$4$};
    \draw (1,1)--(0,0)--(0,1)--(1,0);
    \end{tikzpicture}
    \caption{All spanning trees of the complete graph in Figure~\ref{fig:graphex}(b).}
    \label{fig:completeex}
\end{figure}

\phantom{space}

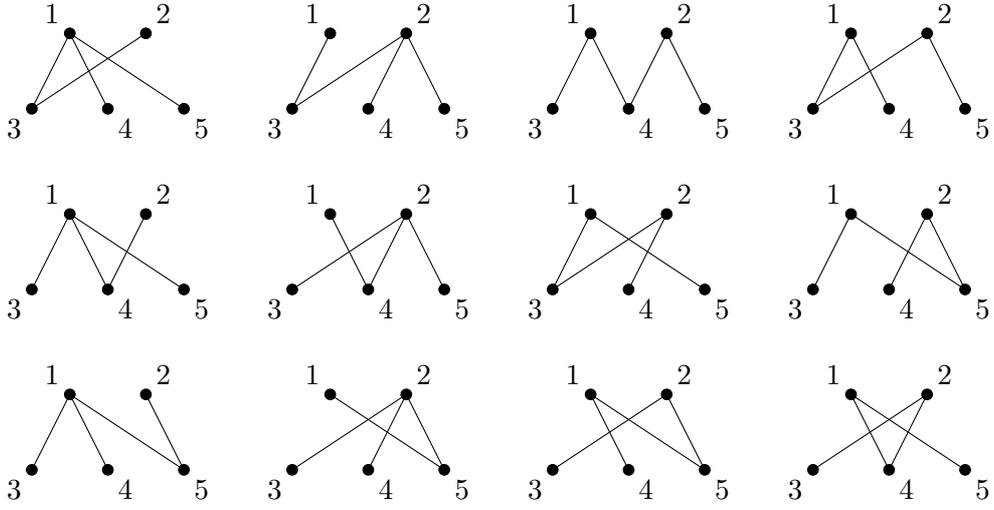
\begin{figure}[H]
    \centering
    \begin{tikzpicture}
    \filldraw (0,1) circle (2pt);
    \node [above left] at (0,1) {$1$};
    \filldraw (1,1) circle (2pt);
    \node [above right] at (1,1) {$2$};
    \filldraw (-0.5,0) circle (2pt);
    \node [below left] at (-0.5,0) {$3$};
    \filldraw (0.5,0) circle (2pt);
    \node [below right] at (0.5,0) {$4$};
    \filldraw (1.5,0) circle (2pt);
    \node [below right] at (1.5,0) {$5$};
    \draw (0,1)--(-0.5,0);
    \draw (0,1)--(0.5,0);
    \draw (0,1)--(1.5,0);
    \draw (1,1)--(-0.5,0);
    \end{tikzpicture} \quad
    \begin{tikzpicture}
    \filldraw (0,1) circle (2pt);
    \node [above left] at (0,1) {$1$};
    \filldraw (1,1) circle (2pt);
    \node [above right] at (1,1) {$2$};
    \filldraw (-0.5,0) circle (2pt);
    \node [below left] at (-0.5,0) {$3$};
    \filldraw (0.5,0) circle (2pt);
    \node [below right] at (0.5,0) {$4$};
    \filldraw (1.5,0) circle (2pt);
    \node [below right] at (1.5,0) {$5$};
    \draw (0,1)--(-0.5,0);
    \draw (1,1)--(-0.5,0);
    \draw (1,1)--(0.5,0);
    \draw (1,1)--(1.5,0);
    \end{tikzpicture} \quad
        \begin{tikzpicture}
    \filldraw (0,1) circle (2pt);
    \node [above left] at (0,1) {$1$};
    \filldraw (1,1) circle (2pt);
    \node [above right] at (1,1) {$2$};
    \filldraw (-0.5,0) circle (2pt);
    \node [below left] at (-0.5,0) {$3$};
    \filldraw (0.5,0) circle (2pt);
    \node [below right] at (0.5,0) {$4$};
    \filldraw (1.5,0) circle (2pt);
    \node [below right] at (1.5,0) {$5$};
    \draw (0,1)--(-0.5,0);
    \draw (0,1)--(0.5,0);
    \draw (1,1)--(0.5,0);
    \draw (1,1)--(1.5,0);
    \end{tikzpicture} \quad
        \begin{tikzpicture}
    \filldraw (0,1) circle (2pt);
    \node [above left] at (0,1) {$1$};
    \filldraw (1,1) circle (2pt);
    \node [above right] at (1,1) {$2$};
    \filldraw (-0.5,0) circle (2pt);
    \node [below left] at (-0.5,0) {$3$};
    \filldraw (0.5,0) circle (2pt);
    \node [below right] at (0.5,0) {$4$};
    \filldraw (1.5,0) circle (2pt);
    \node [below right] at (1.5,0) {$5$};
    \draw (0,1)--(-0.5,0);
    \draw (0,1)--(0.5,0);
    \draw (1,1)--(-0.5,0);
    \draw (1,1)--(1.5,0);
    \end{tikzpicture}
    
    \bigskip
    
        \begin{tikzpicture}
    \filldraw (0,1) circle (2pt);
    \node [above left] at (0,1) {$1$};
    \filldraw (1,1) circle (2pt);
    \node [above right] at (1,1) {$2$};
    \filldraw (-0.5,0) circle (2pt);
    \node [below left] at (-0.5,0) {$3$};
    \filldraw (0.5,0) circle (2pt);
    \node [below right] at (0.5,0) {$4$};
    \filldraw (1.5,0) circle (2pt);
    \node [below right] at (1.5,0) {$5$};
    \draw (0,1)--(-0.5,0);
    \draw (0,1)--(0.5,0);
    \draw (0,1)--(1.5,0);
    \draw (1,1)--(0.5,0);
    \end{tikzpicture} \quad
    \begin{tikzpicture}
    \filldraw (0,1) circle (2pt);
    \node [above left] at (0,1) {$1$};
    \filldraw (1,1) circle (2pt);
    \node [above right] at (1,1) {$2$};
    \filldraw (-0.5,0) circle (2pt);
    \node [below left] at (-0.5,0) {$3$};
    \filldraw (0.5,0) circle (2pt);
    \node [below right] at (0.5,0) {$4$};
    \filldraw (1.5,0) circle (2pt);
    \node [below right] at (1.5,0) {$5$};
    \draw (0,1)--(0.5,0);
    \draw (1,1)--(-0.5,0);
    \draw (1,1)--(0.5,0);
    \draw (1,1)--(1.5,0);
    \end{tikzpicture} \quad
        \begin{tikzpicture}
    \filldraw (0,1) circle (2pt);
    \node [above left] at (0,1) {$1$};
    \filldraw (1,1) circle (2pt);
    \node [above right] at (1,1) {$2$};
    \filldraw (-0.5,0) circle (2pt);
    \node [below left] at (-0.5,0) {$3$};
    \filldraw (0.5,0) circle (2pt);
    \node [below right] at (0.5,0) {$4$};
    \filldraw (1.5,0) circle (2pt);
    \node [below right] at (1.5,0) {$5$};
    \draw (0,1)--(-0.5,0);
    \draw (0,1)--(1.5,0);
    \draw (1,1)--(-0.5,0);
    \draw (1,1)--(0.5,0);
    \end{tikzpicture} \quad
        \begin{tikzpicture}
    \filldraw (0,1) circle (2pt);
    \node [above left] at (0,1) {$1$};
    \filldraw (1,1) circle (2pt);
    \node [above right] at (1,1) {$2$};
    \filldraw (-0.5,0) circle (2pt);
    \node [below left] at (-0.5,0) {$3$};
    \filldraw (0.5,0) circle (2pt);
    \node [below right] at (0.5,0) {$4$};
    \filldraw (1.5,0) circle (2pt);
    \node [below right] at (1.5,0) {$5$};
    \draw (0,1)--(-0.5,0);
    \draw (0,1)--(1.5,0);
    \draw (1,1)--(0.5,0);
    \draw (1,1)--(1.5,0);
    \end{tikzpicture}
    
    \bigskip
    
        \begin{tikzpicture}
    \filldraw (0,1) circle (2pt);
    \node [above left] at (0,1) {$1$};
    \filldraw (1,1) circle (2pt);
    \node [above right] at (1,1) {$2$};
    \filldraw (-0.5,0) circle (2pt);
    \node [below left] at (-0.5,0) {$3$};
    \filldraw (0.5,0) circle (2pt);
    \node [below right] at (0.5,0) {$4$};
    \filldraw (1.5,0) circle (2pt);
    \node [below right] at (1.5,0) {$5$};
    \draw (0,1)--(-0.5,0);
    \draw (0,1)--(0.5,0);
    \draw (0,1)--(1.5,0);
    \draw (1,1)--(1.5,0);
    \end{tikzpicture} \quad
    \begin{tikzpicture}
    \filldraw (0,1) circle (2pt);
    \node [above left] at (0,1) {$1$};
    \filldraw (1,1) circle (2pt);
    \node [above right] at (1,1) {$2$};
    \filldraw (-0.5,0) circle (2pt);
    \node [below left] at (-0.5,0) {$3$};
    \filldraw (0.5,0) circle (2pt);
    \node [below right] at (0.5,0) {$4$};
    \filldraw (1.5,0) circle (2pt);
    \node [below right] at (1.5,0) {$5$};
    \draw (0,1)--(1.5,0);
    \draw (1,1)--(-0.5,0);
    \draw (1,1)--(0.5,0);
    \draw (1,1)--(1.5,0);
    \end{tikzpicture} \quad
        \begin{tikzpicture}
    \filldraw (0,1) circle (2pt);
    \node [above left] at (0,1) {$1$};
    \filldraw (1,1) circle (2pt);
    \node [above right] at (1,1) {$2$};
    \filldraw (-0.5,0) circle (2pt);
    \node [below left] at (-0.5,0) {$3$};
    \filldraw (0.5,0) circle (2pt);
    \node [below right] at (0.5,0) {$4$};
    \filldraw (1.5,0) circle (2pt);
    \node [below right] at (1.5,0) {$5$};
    \draw (0,1)--(0.5,0);
    \draw (0,1)--(1.5,0);
    \draw (1,1)--(-0.5,0);
    \draw (1,1)--(1.5,0);
    \end{tikzpicture} \quad
        \begin{tikzpicture}
    \filldraw (0,1) circle (2pt);
    \node [above left] at (0,1) {$1$};
    \filldraw (1,1) circle (2pt);
    \node [above right] at (1,1) {$2$};
    \filldraw (-0.5,0) circle (2pt);
    \node [below left] at (-0.5,0) {$3$};
    \filldraw (0.5,0) circle (2pt);
    \node [below right] at (0.5,0) {$4$};
    \filldraw (1.5,0) circle (2pt);
    \node [below right] at (1.5,0) {$5$};
    \draw (0,1)--(0.5,0);
    \draw (0,1)--(1.5,0);
    \draw (1,1)--(-0.5,0);
    \draw (1,1)--(0.5,0);
    \end{tikzpicture}
    \caption{All spanning trees of the complete bipartite graph in Figure~\ref{fig:graphex}(c).}
    \label{fig:bipartiteex}
\end{figure}

\phantom{space}

\begin{figure}[H]
    \centering
    
    \begin{tikzpicture}
        \filldraw (0,0) circle (2pt);
        \node [below left] at (0,0) {$3$};
        \filldraw (1,0) circle (2pt);
        \node [below] at (1,0) {$4$};
        \filldraw (1,1) circle (2pt);
        \node [above left] at (1,1) {$1$};
        \filldraw (2,1) circle (2pt);
        \node [above right] at (2,1) {$2$};
        \filldraw (2,0) circle (2pt);
        \node [below right] at (2,0) {$5$};
        \draw (0,0)--(1,0);
        \draw (1,0)--(2,0)--(2,1)--(1,1);
    \end{tikzpicture} \quad
    \begin{tikzpicture}
        \filldraw (0,0) circle (2pt);
        \node [below left] at (0,0) {$3$};
        \filldraw (1,0) circle (2pt);
        \node [below] at (1,0) {$4$};
        \filldraw (1,1) circle (2pt);
        \node [above left] at (1,1) {$1$};
        \filldraw (2,1) circle (2pt);
        \node [above right] at (2,1) {$2$};
        \filldraw (2,0) circle (2pt);
        \node [below right] at (2,0) {$5$};
        \draw (0,0)--(1,0);
        \draw (2,0)--(2,1)--(1,1)--(1,0);
    \end{tikzpicture} \quad
    \begin{tikzpicture}
        \filldraw (0,0) circle (2pt);
        \node [below left] at (0,0) {$3$};
        \filldraw (1,0) circle (2pt);
        \node [below] at (1,0) {$4$};
        \filldraw (1,1) circle (2pt);
        \node [above left] at (1,1) {$1$};
        \filldraw (2,1) circle (2pt);
        \node [above right] at (2,1) {$2$};
        \filldraw (2,0) circle (2pt);
        \node [below right] at (2,0) {$5$};
        \draw (0,0)--(1,0);
        \draw (1,1)--(2,1)--(1,0)--(2,0);
    \end{tikzpicture} \quad
    \begin{tikzpicture}
         \filldraw (0,0) circle (2pt);
        \node [below left] at (0,0) {$3$};
        \filldraw (1,0) circle (2pt);
        \node [below] at (1,0) {$4$};
        \filldraw (1,1) circle (2pt);
        \node [above left] at (1,1) {$1$};
        \filldraw (2,1) circle (2pt);
        \node [above right] at (2,1) {$2$};
        \filldraw (2,0) circle (2pt);
        \node [below right] at (2,0) {$5$};
        \draw (0,0)--(1,0);
        \draw (1,1)--(1,0)--(2,1)--(2,0);
    \end{tikzpicture} 
    
    \bigskip

    \begin{tikzpicture}
         \filldraw (0,0) circle (2pt);
        \node [below left] at (0,0) {$3$};
        \filldraw (1,0) circle (2pt);
        \node [below] at (1,0) {$4$};
        \filldraw (1,1) circle (2pt);
        \node [above left] at (1,1) {$1$};
        \filldraw (2,1) circle (2pt);
        \node [above right] at (2,1) {$2$};
        \filldraw (2,0) circle (2pt);
        \node [below right] at (2,0) {$5$};
        \draw (0,0)--(1,0);
        \draw (2,1)--(1,1)--(1,0)--(2,0);
    \end{tikzpicture} \quad
    \begin{tikzpicture}
        \filldraw (0,0) circle (2pt);
        \node [below left] at (0,0) {$3$};
        \filldraw (1,0) circle (2pt);
        \node [below] at (1,0) {$4$};
        \filldraw (1,1) circle (2pt);
        \node [above left] at (1,1) {$1$};
        \filldraw (2,1) circle (2pt);
        \node [above right] at (2,1) {$2$};
        \filldraw (2,0) circle (2pt);
        \node [below right] at (2,0) {$5$};
        \draw (0,0)--(1,0);
        \draw (1,1)--(1,0)--(2,0)--(2,1);
    \end{tikzpicture} \quad
    \begin{tikzpicture}
        \filldraw (0,0) circle (2pt);
        \node [below left] at (0,0) {$3$};
        \filldraw (1,0) circle (2pt);
        \node [below] at (1,0) {$4$};
        \filldraw (1,1) circle (2pt);
        \node [above left] at (1,1) {$1$};
        \filldraw (2,1) circle (2pt);
        \node [above right] at (2,1) {$2$};
        \filldraw (2,0) circle (2pt);
        \node [below right] at (2,0) {$5$};
        \draw (0,0)--(1,0)--(2,0);
        \draw (2,1)--(1,0)--(1,1);
    \end{tikzpicture} \quad
    \begin{tikzpicture}
        \filldraw (0,0) circle (2pt);
        \node [below left] at (0,0) {$3$};
        \filldraw (1,0) circle (2pt);
        \node [below] at (1,0) {$4$};
        \filldraw (1,1) circle (2pt);
        \node [above left] at (1,1) {$1$};
        \filldraw (2,1) circle (2pt);
        \node [above right] at (2,1) {$2$};
        \filldraw (2,0) circle (2pt);
        \node [below right] at (2,0) {$5$};
        \draw (0,0)--(1,0)--(2,1);
        \draw (1,1)--(2,1)--(2,0);
    \end{tikzpicture}
    \caption{All spanning trees of the threshold graph in Figure~\ref{fig:graphex}(d).}
    \label{fig:thresholdex}
\end{figure}
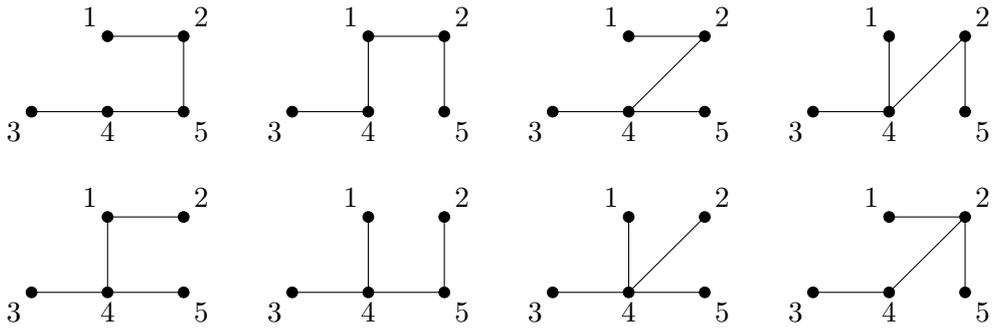

\phantom{space}

\begin{figure}[H]
    \centering
    \begin{tikzpicture}
        \filldraw (0,0) circle (2pt);
        \node [below left] at (0,0) {$3$};
        \filldraw (1,0) circle (2pt);
        \node [below] at (1,0) {$4$};
        \filldraw (1,1) circle (2pt);
        \node [above left] at (1,1) {$1$};
        \filldraw (2,1) circle (2pt);
        \node [above right] at (2,1) {$2$};
        \filldraw (2,0) circle (2pt);
        \node [below right] at (2,0) {$5$};
        \draw (0,0)--(1,0);
        \draw (1,0)--(2,0)--(2,1)--(1,1);
    \end{tikzpicture} \quad
        \begin{tikzpicture}
        \filldraw (0,0) circle (2pt);
        \node [below left] at (0,0) {$3$};
        \filldraw (1,0) circle (2pt);
        \node [below] at (1,0) {$4$};
        \filldraw (1,1) circle (2pt);
        \node [above left] at (1,1) {$1$};
        \filldraw (2,1) circle (2pt);
        \node [above right] at (2,1) {$2$};
        \filldraw (2,0) circle (2pt);
        \node [below right] at (2,0) {$5$};
        \draw (0,0)--(1,0);
        \draw (2,0)--(2,1)--(1,1)--(1,0);
    \end{tikzpicture} \quad
    \begin{tikzpicture}
        \filldraw (0,0) circle (2pt);
        \node [below left] at (0,0) {$3$};
        \filldraw (1,0) circle (2pt);
        \node [below] at (1,0) {$4$};
        \filldraw (1,1) circle (2pt);
        \node [above left] at (1,1) {$1$};
        \filldraw (2,1) circle (2pt);
        \node [above right] at (2,1) {$2$};
        \filldraw (2,0) circle (2pt);
        \node [below right] at (2,0) {$5$};
        \draw (0,0)--(1,0);
        \draw (1,0)--(2,0)--(1,1)--(2,1);
    \end{tikzpicture} \quad
    \begin{tikzpicture}
        \filldraw (0,0) circle (2pt);
        \node [below left] at (0,0) {$3$};
        \filldraw (1,0) circle (2pt);
        \node [below] at (1,0) {$4$};
        \filldraw (1,1) circle (2pt);
        \node [above left] at (1,1) {$1$};
        \filldraw (2,1) circle (2pt);
        \node [above right] at (2,1) {$2$};
        \filldraw (2,0) circle (2pt);
        \node [below right] at (2,0) {$5$};
        \draw (0,0)--(1,0);
        \draw (1,0)--(1,1)--(2,0)--(2,1);
    \end{tikzpicture}
    
    \bigskip

    \begin{tikzpicture}
        \filldraw (0,0) circle (2pt);
        \node [below left] at (0,0) {$3$};
        \filldraw (1,0) circle (2pt);
        \node [below] at (1,0) {$4$};
        \filldraw (1,1) circle (2pt);
        \node [above left] at (1,1) {$1$};
        \filldraw (2,1) circle (2pt);
        \node [above right] at (2,1) {$2$};
        \filldraw (2,0) circle (2pt);
        \node [below right] at (2,0) {$5$};
        \draw (0,0)--(1,0);
        \draw (2,1)--(1,1)--(1,0)--(2,0);
    \end{tikzpicture} \quad 
    \begin{tikzpicture}
        \filldraw (0,0) circle (2pt);
        \node [below left] at (0,0) {$3$};
        \filldraw (1,0) circle (2pt);
        \node [below] at (1,0) {$4$};
        \filldraw (1,1) circle (2pt);
        \node [above left] at (1,1) {$1$};
        \filldraw (2,1) circle (2pt);
        \node [above right] at (2,1) {$2$};
        \filldraw (2,0) circle (2pt);
        \node [below right] at (2,0) {$5$};
        \draw (0,0)--(1,0);
        \draw (1,1)--(1,0)--(2,0)--(2,1);
    \end{tikzpicture} \quad
    \begin{tikzpicture}
        \filldraw (0,0) circle (2pt);
        \node [below left] at (0,0) {$3$};
        \filldraw (1,0) circle (2pt);
        \node [below] at (1,0) {$4$};
        \filldraw (1,1) circle (2pt);
        \node [above left] at (1,1) {$1$};
        \filldraw (2,1) circle (2pt);
        \node [above right] at (2,1) {$2$};
        \filldraw (2,0) circle (2pt);
        \node [below right] at (2,0) {$5$};
        \draw (0,0)--(1,0)--(2,0);
        \draw (1,1)--(2,0)--(2,1);
    \end{tikzpicture} \quad
    \begin{tikzpicture}
        \filldraw (0,0) circle (2pt);
        \node [below left] at (0,0) {$3$};
        \filldraw (1,0) circle (2pt);
        \node [below] at (1,0) {$4$};
        \filldraw (1,1) circle (2pt);
        \node [above left] at (1,1) {$1$};
        \filldraw (2,1) circle (2pt);
        \node [above right] at (2,1) {$2$};
        \filldraw (2,0) circle (2pt);
        \node [below right] at (2,0) {$5$};
        \draw (0,0)--(1,0)--(1,1);
        \draw (2,1)--(1,1)--(2,0);
    \end{tikzpicture}
    \caption{All spanning trees of special $2$-threshold graph in Figure~\ref{fig:graphex}(e).}
    \label{fig:quasienum}
\end{figure}
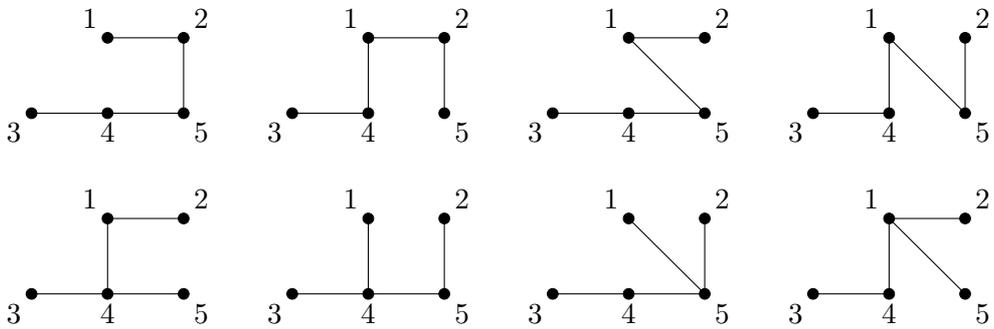

\phantom{space}

\newpage
\phantom{space}

\begin{figure}[H]
    \centering
     \begin{tikzpicture}
        \filldraw (0,0) circle (2pt);
        \node[below] at (0,0) {$4$};
        \filldraw (1,0) circle (2pt);
        \node[below] at (1,0) {$5$};
        \filldraw (2,0) circle (2pt);
        \node[below] at (2,0) {$6$};
        \filldraw (3,0) circle (2pt);
        \node[below] at (3,0) {$7$};
        \filldraw (0.5,1) circle (2pt);
        \node[above] at (0.5,1) {$1$};
        \filldraw (1.5,1) circle (2pt);
        \node[above] at (1.5,1) {$2$};
        \filldraw (2.5,1) circle (2pt);
        \node[above] at (2.5,1) {$3$};
        \draw (0.5,1) -- (2,0);
        \draw (0.5,1) -- (3,0);
        \draw (1.5,1) -- (0,0);
        \draw (1.5,1) -- (1,0);
        \draw (1.5,1) -- (2,0);
        \draw (2.5,1) -- (0,0);
    \end{tikzpicture}
    \quad
    \begin{tikzpicture}
        \filldraw (0,0) circle (2pt);
        \node[below] at (0,0) {$4$};
        \filldraw (1,0) circle (2pt);
        \node[below] at (1,0) {$5$};
        \filldraw (2,0) circle (2pt);
        \node[below] at (2,0) {$6$};
        \filldraw (3,0) circle (2pt);
        \node[below] at (3,0) {$7$};
        \filldraw (0.5,1) circle (2pt);
        \node[above] at (0.5,1) {$1$};
        \filldraw (1.5,1) circle (2pt);
        \node[above] at (1.5,1) {$2$};
        \filldraw (2.5,1) circle (2pt);
        \node[above] at (2.5,1) {$3$};
        \draw (0.5,1) -- (1,0);
        \draw (0.5,1) -- (3,0);
        \draw (1.5,1) -- (0,0);
        \draw (1.5,1) -- (1,0);
        \draw (1.5,1) -- (2,0);
        \draw (2.5,1) -- (0,0);
    \end{tikzpicture}
    \quad
    \begin{tikzpicture}
        \filldraw (0,0) circle (2pt);
        \node[below] at (0,0) {$4$};
        \filldraw (1,0) circle (2pt);
        \node[below] at (1,0) {$5$};
        \filldraw (2,0) circle (2pt);
        \node[below] at (2,0) {$6$};
        \filldraw (3,0) circle (2pt);
        \node[below] at (3,0) {$7$};
        \filldraw (0.5,1) circle (2pt);
        \node[above] at (0.5,1) {$1$};
        \filldraw (1.5,1) circle (2pt);
        \node[above] at (1.5,1) {$2$};
        \filldraw (2.5,1) circle (2pt);
        \node[above] at (2.5,1) {$3$};
        \draw (0.5,1) -- (1,0);
        \draw (0.5,1) -- (2,0);
        \draw (0.5,1) -- (3,0);
        \draw (1.5,1) -- (0,0);
        \draw (1.5,1) -- (2,0);
        \draw (2.5,1) -- (0,0);
    \end{tikzpicture}
    \quad
    \begin{tikzpicture}
        \filldraw (0,0) circle (2pt);
        \node[below] at (0,0) {$4$};
        \filldraw (1,0) circle (2pt);
        \node[below] at (1,0) {$5$};
        \filldraw (2,0) circle (2pt);
        \node[below] at (2,0) {$6$};
        \filldraw (3,0) circle (2pt);
        \node[below] at (3,0) {$7$};
        \filldraw (0.5,1) circle (2pt);
        \node[above] at (0.5,1) {$1$};
        \filldraw (1.5,1) circle (2pt);
        \node[above] at (1.5,1) {$2$};
        \filldraw (2.5,1) circle (2pt);
        \node[above] at (2.5,1) {$3$};
        \draw (0.5,1) -- (1,0);
        \draw (0.5,1) -- (2,0);
        \draw (0.5,1) -- (3,0);
        \draw (1.5,1) -- (0,0);
        \draw (1.5,1) -- (1,0);
        \draw (2.5,1) -- (0,0);
    \end{tikzpicture}
    
    \bigskip
    
    \begin{tikzpicture}
        \filldraw (0,0) circle (2pt);
        \node[below] at (0,0) {$4$};
        \filldraw (1,0) circle (2pt);
        \node[below] at (1,0) {$5$};
        \filldraw (2,0) circle (2pt);
        \node[below] at (2,0) {$6$};
        \filldraw (3,0) circle (2pt);
        \node[below] at (3,0) {$7$};
        \filldraw (0.5,1) circle (2pt);
        \node[above] at (0.5,1) {$1$};
        \filldraw (1.5,1) circle (2pt);
        \node[above] at (1.5,1) {$2$};
        \filldraw (2.5,1) circle (2pt);
        \node[above] at (2.5,1) {$3$};
        \draw (0.5,1) -- (0,0);
        \draw (0.5,1) -- (3,0);
        \draw (1.5,1) -- (0,0);
        \draw (1.5,1) -- (1,0);
        \draw (1.5,1) -- (2,0);
        \draw (2.5,1) -- (0,0);
    \end{tikzpicture}
    \quad
    \begin{tikzpicture}
        \filldraw (0,0) circle (2pt);
        \node[below] at (0,0) {$4$};
        \filldraw (1,0) circle (2pt);
        \node[below] at (1,0) {$5$};
        \filldraw (2,0) circle (2pt);
        \node[below] at (2,0) {$6$};
        \filldraw (3,0) circle (2pt);
        \node[below] at (3,0) {$7$};
        \filldraw (0.5,1) circle (2pt);
        \node[above] at (0.5,1) {$1$};
        \filldraw (1.5,1) circle (2pt);
        \node[above] at (1.5,1) {$2$};
        \filldraw (2.5,1) circle (2pt);
        \node[above] at (2.5,1) {$3$};
        \draw (0.5,1) -- (0,0);
        \draw (0.5,1) -- (2,0);
        \draw (0.5,1) -- (3,0);
        \draw (1.5,1) -- (1,0);
        \draw (1.5,1) -- (2,0);
        \draw (2.5,1) -- (0,0);
    \end{tikzpicture}
    \quad
    \begin{tikzpicture}
        \filldraw (0,0) circle (2pt);
        \node[below] at (0,0) {$4$};
        \filldraw (1,0) circle (2pt);
        \node[below] at (1,0) {$5$};
        \filldraw (2,0) circle (2pt);
        \node[below] at (2,0) {$6$};
        \filldraw (3,0) circle (2pt);
        \node[below] at (3,0) {$7$};
        \filldraw (0.5,1) circle (2pt);
        \node[above] at (0.5,1) {$1$};
        \filldraw (1.5,1) circle (2pt);
        \node[above] at (1.5,1) {$2$};
        \filldraw (2.5,1) circle (2pt);
        \node[above] at (2.5,1) {$3$};
        \draw (0.5,1) -- (0,0);
        \draw (0.5,1) -- (2,0);
        \draw (0.5,1) -- (3,0);
        \draw (1.5,1) -- (0,0);
        \draw (1.5,1) -- (1,0);
        \draw (2.5,1) -- (0,0);
    \end{tikzpicture}
    \quad
    \begin{tikzpicture}
        \filldraw (0,0) circle (2pt);
        \node[below] at (0,0) {$4$};
        \filldraw (1,0) circle (2pt);
        \node[below] at (1,0) {$5$};
        \filldraw (2,0) circle (2pt);
        \node[below] at (2,0) {$6$};
        \filldraw (3,0) circle (2pt);
        \node[below] at (3,0) {$7$};
        \filldraw (0.5,1) circle (2pt);
        \node[above] at (0.5,1) {$1$};
        \filldraw (1.5,1) circle (2pt);
        \node[above] at (1.5,1) {$2$};
        \filldraw (2.5,1) circle (2pt);
        \node[above] at (2.5,1) {$3$};
        \draw (0.5,1) -- (0,0);
        \draw (0.5,1) -- (1,0);
        \draw (0.5,1) -- (3,0);
        \draw (1.5,1) -- (1,0);
        \draw (1.5,1) -- (2,0);
        \draw (2.5,1) -- (0,0);
    \end{tikzpicture}
    
    \bigskip
    
    \begin{tikzpicture}
        \filldraw (0,0) circle (2pt);
        \node[below] at (0,0) {$4$};
        \filldraw (1,0) circle (2pt);
        \node[below] at (1,0) {$5$};
        \filldraw (2,0) circle (2pt);
        \node[below] at (2,0) {$6$};
        \filldraw (3,0) circle (2pt);
        \node[below] at (3,0) {$7$};
        \filldraw (0.5,1) circle (2pt);
        \node[above] at (0.5,1) {$1$};
        \filldraw (1.5,1) circle (2pt);
        \node[above] at (1.5,1) {$2$};
        \filldraw (2.5,1) circle (2pt);
        \node[above] at (2.5,1) {$3$};
        \draw (0.5,1) -- (0,0);
        \draw (0.5,1) -- (1,0);
        \draw (0.5,1) -- (3,0);
        \draw (1.5,1) -- (0,0);
        \draw (1.5,1) -- (2,0);
        \draw (2.5,1) -- (0,0);
    \end{tikzpicture}
    \quad
    \begin{tikzpicture}
        \filldraw (0,0) circle (2pt);
        \node[below] at (0,0) {$4$};
        \filldraw (1,0) circle (2pt);
        \node[below] at (1,0) {$5$};
        \filldraw (2,0) circle (2pt);
        \node[below] at (2,0) {$6$};
        \filldraw (3,0) circle (2pt);
        \node[below] at (3,0) {$7$};
        \filldraw (0.5,1) circle (2pt);
        \node[above] at (0.5,1) {$1$};
        \filldraw (1.5,1) circle (2pt);
        \node[above] at (1.5,1) {$2$};
        \filldraw (2.5,1) circle (2pt);
        \node[above] at (2.5,1) {$3$};
        \draw (0.5,1) -- (0,0);
        \draw (0.5,1) -- (1,0);
        \draw (0.5,1) -- (2,0);
        \draw (0.5,1) -- (3,0);
        \draw (1.5,1) -- (2,0);
        \draw (2.5,1) -- (0,0);
    \end{tikzpicture}
    \quad
    \begin{tikzpicture}
        \filldraw (0,0) circle (2pt);
        \node[below] at (0,0) {$4$};
        \filldraw (1,0) circle (2pt);
        \node[below] at (1,0) {$5$};
        \filldraw (2,0) circle (2pt);
        \node[below] at (2,0) {$6$};
        \filldraw (3,0) circle (2pt);
        \node[below] at (3,0) {$7$};
        \filldraw (0.5,1) circle (2pt);
        \node[above] at (0.5,1) {$1$};
        \filldraw (1.5,1) circle (2pt);
        \node[above] at (1.5,1) {$2$};
        \filldraw (2.5,1) circle (2pt);
        \node[above] at (2.5,1) {$3$};
        \draw (0.5,1) -- (0,0);
        \draw (0.5,1) -- (1,0);
        \draw (0.5,1) -- (2,0);
        \draw (0.5,1) -- (3,0);
        \draw (1.5,1) -- (1,0);
        \draw (2.5,1) -- (0,0);
    \end{tikzpicture}
    \quad
    \begin{tikzpicture}
        \filldraw (0,0) circle (2pt);
        \node[below] at (0,0) {$4$};
        \filldraw (1,0) circle (2pt);
        \node[below] at (1,0) {$5$};
        \filldraw (2,0) circle (2pt);
        \node[below] at (2,0) {$6$};
        \filldraw (3,0) circle (2pt);
        \node[below] at (3,0) {$7$};
        \filldraw (0.5,1) circle (2pt);
        \node[above] at (0.5,1) {$1$};
        \filldraw (1.5,1) circle (2pt);
        \node[above] at (1.5,1) {$2$};
        \filldraw (2.5,1) circle (2pt);
        \node[above] at (2.5,1) {$3$};
        \draw (0.5,1) -- (0,0);
        \draw (0.5,1) -- (1,0);
        \draw (0.5,1) -- (2,0);
        \draw (0.5,1) -- (3,0);
        \draw (1.5,1) -- (0,0);
        \draw (2.5,1) -- (0,0);
    \end{tikzpicture}
    \caption{All spanning trees of the Ferrers graph in Figure~\ref{fig:graphex}(f).}
    \label{fig:ferrersex}
\end{figure}
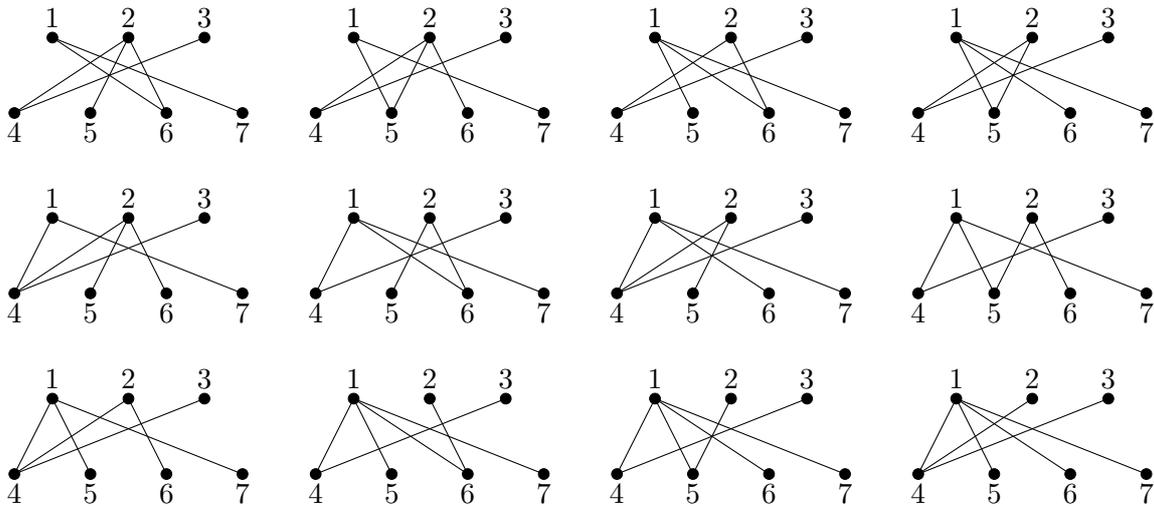

\phantom{space}

\bibliographystyle{abbrv}
\bibliography{biblio}

\begin{thebibliography}{10}

\bibitem{Aigner-Ziegler}
M.~Aigner and G.~M. Ziegler.
\newblock {\em Proofs from {THE BOOK}}.
\newblock Springer-Verlag Berlin Heidelberg, 6th edition, 2018.

\bibitem{atajan_inaba_04}
T.~Atajan and H.~Inaba.
\newblock Network reliability analysis by counting the number of spanning
  trees.
\newblock In {\em {IEEE} {I}nternational {S}ymposium on {C}ommunications and
  {I}nformation {T}echnology}, volume~1, pages 601--604, 2004.

\bibitem{Austin}
T.~L. Austin.
\newblock The enumeration of point labelled chromatic graphs and trees.
\newblock {\em Canadian Journal of Mathematics}, 12:535--545, 1960.

\bibitem{Borchardt}
C.~W. Borchardt.
\newblock {\"U}ber eine {I}nterpolationsformel f\"ur eine {A}rt {S}ymmetrischer
  {F}unctionen und \"uber {D}eren {A}nwendung.
\newblock {\em Berlin, Abhandl}, pages 1--20, 1860.

\bibitem{cayley}
A.~Cayley.
\newblock A theorem on trees.
\newblock {\em Quart. J. Pure Appl. Math.}, 23:376--378, 1889.

\bibitem{Chestnut-Fishkind}
S.~R. Chestnut and D.~E. Fishkind.
\newblock Counting spanning trees of threshold graphs.
\newblock arXiv:1208.4125, 2013.

\bibitem{chvatal_hammer_1977}
V.~Chv\'atal and P.~L. Hammer.
\newblock Aggregation of inequalities in integer programming.
\newblock In P.~Hammer, E.~Johnson, B.~Korte, and G.~Nemhauser, editors, {\em
  {S}tudies in {I}nteger {P}rogramming}, volume~1 of {\em Annals of Discrete
  Mathematics}, pages 145--162. Elsevier, 1977.

\bibitem{E-VW}
R.~Ehrenborg and S.~van Willigenburg.
\newblock Enumerative properties of {F}errers graphs.
\newblock {\em Discrete Comput. Geom.}, 32(4):481--492, 2004.

\bibitem{Hammer-Kelmans}
P.~L. Hammer and A.~K. Kelmans.
\newblock Laplacian spectra and spanning trees of threshold graphs.
\newblock {\em Discrete Appl. Math.}, 65(1-3):255--273, 1996.

\bibitem{Hartsfield-Werth}
N.~Hartsfield and J.~S. Werth.
\newblock Spanning trees of the complete bipartite graph.
\newblock In {\em Topics in {C}ombinatorics and {G}raph {T}heory
  ({O}berwolfach)}, pages 339--346. Physica, Heidelberg, 1990.

\bibitem{HKV}
L.-J. Hung, T.~Kloks, and F.~S. Villaamil.
\newblock Black-and-white threshold graphs.
\newblock In {\em Proceedings of the {S}eventeenth {C}omputing: {T}he
  {A}ustralasian {T}heory {S}ymposium}, volume 119 of {\em {CATS}}, pages
  121--130, 2011.

\bibitem{kirchhoff}
G.~Kirchhoff.
\newblock Uber die auflosung der gleichungen, auf welche man bei der
  untersuchung der linearen verteilung galvanischer strome gefuhrt wird.
\newblock {\em Annalen der Physik}, 148(12):497--508, 1847.

\bibitem{klee_stamps_weighted}
S.~Klee and M.~T. Stamps.
\newblock Linear algebraic techniques for weighted spanning tree enumeration.
\newblock {\em Linear Algebra Appl.}, 582:391--402, 2019.

\bibitem{klee_stamps}
S.~Klee and M.~T. Stamps.
\newblock Linear algebraic techniques for spanning tree enumeration.
\newblock {\em Amer. Math. Monthly}, 127:297--307, 2020.

\bibitem{Lewis}
R.~P. Lewis.
\newblock The number of spanning trees of a complete multipartite graph.
\newblock {\em Discrete Math.}, 197/198:537--541, 1999.

\bibitem{liu}
J.~Liu and H.~Zhou.
\newblock Maximum induced matchings in graphs.
\newblock {\em Adv. Math.}, 170:277--281, 1997.

\bibitem{ma_18}
F.~Ma and B.~Yao.
\newblock An iteration method for computing the total number of spanning trees
  and its applications in graph theory.
\newblock {\em Theoret. Comput. Sci.}, 708:46--57, 2018.

\bibitem{Mahadev-Peled}
N.~V.~R. Mahadev and U.~N. Peled.
\newblock {\em Threshold graphs and related topics}, volume~56 of {\em Annals
  of Discrete Mathematics}.
\newblock North-Holland Publishing Co., Amsterdam, 1995.

\bibitem{Martin-Reiner}
J.~L. Martin and V.~Reiner.
\newblock Factorization of some weighted spanning tree enumerators.
\newblock {\em J. Combin. Theory Ser. A}, 104(2):287--300, 2003.

\bibitem{Merris}
R.~Merris.
\newblock Degree maximal graphs are {L}aplacian integral.
\newblock {\em Linear Algebra Appl.}, 199:381--389, 1994.

\bibitem{Moon}
J.~W. Moon.
\newblock {\em Counting labelled trees}, volume~1 of {\em Canadian Mathematical
  Monographs}.
\newblock Canadian Mathematical Congress, Montreal, QC, 1970.

\bibitem{Onodera}
R.~Onodera.
\newblock On the number of trees in a complete {$n$}-partite graph.
\newblock {\em Matrix Tensor Quart.}, 23:142--146, 1972/73.

\bibitem{Scoins}
H.~I. Scoins.
\newblock The number of trees with nodes of alternate parity.
\newblock {\em Proc. Cambridge Philos. Soc.}, 58:12--16, 1962.

\bibitem{EC1}
R.~P. Stanley.
\newblock {\em Enumerative combinatorics. {V}olume 1}, volume~49 of {\em
  Cambridge Studies in Advanced Mathematics}.
\newblock Cambridge University Press, Cambridge, 2nd edition, 2012.

\bibitem{Temperley}
H.~N.~V. Temperley.
\newblock On the mutual cancellation of cluster integrals in {M}ayer's fugacity
  series.
\newblock {\em Proc. Phys. Soc.}, 83:3--16, 1964.

\end{thebibliography}

\end{document}